\newtheorem{theorem}{Theorem}[section]
\newtheorem{definition}[theorem]{Definition}
\newtheorem{proposition}[theorem]{Proposition}
\newtheorem{lemma}[theorem]{Lemma}
\newtheorem{remark}[theorem]{Remark}
\newcommand{\A}{\mathcal{A}}
 \newcommand{\W}{\mathcal  L}
\hfill \footnotesize {\rm   KMS states  on a   generalized  Toeplitz  Algebra  } \hfill
\hfill \footnotesize {\rm  J. Ahn, S. Y.  Jang}  \hfill$~$}
\begin{document}
\thispagestyle{empty}
 \setcounter{page}{1}

\begin{center}

{\large\bf  KMS states   on  a    generalized  Toeplitz algebra }
\vskip.20in
{\bf  Jieun Ahn  and  Sun Young Jang$^*$  } \\[2mm]

\author{Jieun Ahn}
\address{Department of Mathematics,
\newline  \indent  University of Ulsan, 
\newline  \indent Ulsan 44610, Republic of  Korea}
\email{je302@naver.com}

\author{Sun Young Jang}
 \address{Department of Mathematics,
\newline \indent University of Ulsan,  
\newline \indent Ulsan 44610, Republic of  Korea}
\email{77jsym@gmail.com}

\vskip 5mm

\end{center}

\vskip 5mm
 \noindent{\footnotesize{\bf Abstract.}
 In this paper, we   consider  a  generalized   Toeplitz  algebra  $\mathcal{T} ( \mathrm{P}\rtimes\Bbb N^{\times})$ for a    non-quasi-lattice  ordered   semigroup  $   \mathrm{P}\rtimes\Bbb N^{\times}$ 
where   $    \mathrm{P}\rtimes\Bbb N^{\times}$   is  a  semidirect  product  of    an additive  semigroup  $ \mathrm{P} = \{0, 2, 3, \cdots  \}$ 
   by  a multiplicative   positive  natural  numbers  semigroup $  \Bbb N^{\times}$. And  also  we compute  the  values of  the  KMS state of 
the  natural $C^*$-dynamical  system  $(  \mathcal{T} ( \mathrm{P}\rtimes\Bbb N^{\times}),   \Bbb R,  \sigma ).$

 \vskip.10in
 \footnotetext {2010 Mathematics Subject Classification:  46L05, 47B35, 4703, 47C15}
 
 \footnotetext {Keywords and phrases:  Toeplitz algebra;  quasi-lattice ordered group; covariant isometric representation; KMS-state}

\footnotetext {$^*$Corresponding author}

\baselineskip=14pt

  \newtheorem{df}{Definition}[section]
  \newtheorem{rk}[df]{Remark}
   \newtheorem{lem}[df]{Lemma}
   \newtheorem{thm}[df]{Theorem}
   \newtheorem{pro}[df]{Proposition}
   \newtheorem{cor}[df]{Corollary}
   \newtheorem{ex}[df]{Example}
   \newtheorem{rem}[df]{Remark}
 \setcounter{section}{0}
 \numberwithin{equation}{section}

\vskip .2in

\section{Introduction }

   In  the recent  decades there  has  been    lots  of interest in  $C^*$-algebras generated  by  isometries.  It  seems  that it  started with  
L. A. Coburn's   well-known theorem, which asserted that the $C^*$-algebra generated by a non-unitary  isometry on a separable infinite
dimensional Hilbert space does not depend on the
particular choice of the isometry \cite{Co1, Co}.  Many authors have interests in the generalization of Coburn's
theorem,   the uniqueness property  of the $C^*$-algebras generated by isometries  which  was  called  by  Nica  \cite{Nica}.
  If the $C^*$-algebras
generated by isometries have the uniqueness property, the structures
of those $C^*$-algebras are to some extent  independent of the choice of isometries on a Hilbert space. All the $C^*$-algebras generated by  
isometric representations of  the semigroup $\Bbb N$ of natural
numbers have the uniqueness property and so are isomorphic  to the   classical Toeplitz algebra by Coburn's result.  In addtion,  it was known that the uniqueness property  holds for the $C^*$-algebras generated by one-parameter semigroups of isometries  \cite{Do},
the Cuntz algebras \cite{Cu1},   and   the  $C^*$-algebras  generated  by   isometric  representations  of  the    positive   semigroup  of a totally  ordered  group    \cite{Mur1}.
A. Nica introduced    a  quasi-lattice  ordered  group  which  is  very  suitable  for the uniqueness  property  of  $C^*$-algebras generated by  semigroups of isometries. 
 And Laca  and  Raeburn  also had  important    results  on  it  \cite{LR1, LR2}.
 There  are several  ways to  construct the $C^*$-algebras
generated by semigroups of  isometries.   At  first  Murphy constructed  the  full  semi-group  $C^*$-algebra    by  enveloping all isometric
representations of  a  semi-group $M$  which is  denoted  by $C^*(M)$.
 Seeing from the  definition of the  full semigroup $C^*$-algebra, the  full
semigroup $C^*$-algebra has the universal property as follows:
if we put  the canonical isometric  homomorphism $\mathrm{W}$ of $M$ to
the semigroup $C^*$-algebra $C^*(M)$, then for  any isometric homomorphism $\mathrm{V}$ of $M$
to a unital $C^*$-algebra $B$ there exists
 a unique homomorphism  from $C^*(M)$ to the unital $C^*$-algebra $B$
sending $\mathrm{W}_x$ to $\mathrm{V}_x$ for  each $x \in M$.
    Murphy showed that    $C^*$-algebras  generated  by  isometric  representations of  the  positive  semigroup of a  totally  ordered groups  are  all  isomorphic, 
but  it  seems  that  the  full  semigroup $C^*$-algebra  is   too   big  for  the  uniqueness  property.
 On the  other  hand   we  can  also consider 
the  $C^*$-algebra generated by the left regular isometric representations of   a  left-cancellative
semigroup  $M$,  which 
  has  been studied much  for decades.   We  are going to call it the reduced
semigroup $C^*$-algebra and  denoted it  by  $C^*_{red}(M)$.
As a typical model of the reduced semigroup $C^*$-algebra we have the  classical  Toeplitz algebra
$C^*_{red}(\Bbb N)$  for   the   semigroup $\Bbb  N$ of all
natural numbers.
Nica  defined the  covariant   isometric  representation  for  a quasi-lattice  ordered   semi-group  $ M,$  
 of  which   the left  regular    isometric representation  is   a  typcal  model.   He   also     defined  the  $C^*$-algebras  $C^*_c(M)$ with  the  universal  property  of  covariant  isometric  representations  of  $M$.

J.  Cuntz  and  X. Li    have   improved  the  theory  of   the  $C^*$-algebras  generated  by isometries,      the  theory  of    the  KMS state  of  the  semigroup $C^*$-algebra, 
and  the  amenability  of semigroup  for  the  more  general   semigroups  in  \cite{Cu2, CuX, Xin}.

  Recently   there  are  very  interesting  results  on  the  KMS state  of   $C^*$-dynamical systems  of  $C^*$-algebras  generated  by   isometries  \cite{Cu2, LR3, LF}.
It  is  known  that the Toeplitz-Cuntz  algebra $\mathcal {T}  \mathcal {O}_n$  has  the  KMS states  at  every  inverse  temperature  $\beta \geq \log n $. And   Cuntz  introduced a  
$C^*$-algebra $\mathcal Q _N$ generated  by an isometric  representation of  the  semidirect  product   $\Bbb N  \rtimes \Bbb N^{\times}$   of  the additive  semigroup $\mathbb {N}$  by  the  natural  action   of the  multiplicative  semigroup 
$\mathbb {N}^{\times}$.    He    proved  that $\mathcal Q _N$  is  simple  and    there    exists  a  unique  KMS state at  inverse temperature 1.
  In \cite {LR3} Laca and  Raeburn  investigated  the  structure  of      the semigroup  $C^*$-algebra  $C^*_c(\Bbb N  \rtimes \Bbb N^{\times})$.
 They    showed  that the semigroup  $C^*$-algebra  $C^*_c(\Bbb N  \rtimes \Bbb N^{\times})$   have  interesting  properties  in the   virtue  of   \cite {E, LF, Exel}.
 In particular    they  showed  that  the   KMS state   for the  natural  dynamics  of  $C^*_c( \Bbb N  \rtimes \Bbb N^{\times})$   has phase   transitions.

  In this paper we   consider   the  semidirect  product    $ \mathrm{P}  \rtimes \Bbb  N ^{\times}$   of 
 the  additive  semigroup $\mathrm{P}  =\{0,2,3  \dots \}$ by  the  multiplicative  semigroup   $\Bbb  N ^{\times}.$ 
 The semigroup $\mathrm{P} = \{ 0, 2, 3,  \cdots \}$  is a generating subsemigroup of the
integer group $\Bbb Z$.
   Even  though $(\Bbb Z, \Bbb  N)$ is  the  typical  model  of  a quasi-lattice  ordered  group,    the order structure of $(\Bbb Z, \mathrm{P})$ with the positive cone $\mathrm{P}$  is  
 not  a  quasi-lattice  ordered  group.    The  author  showed  that $ C^*_{red}(\mathrm{P}) $ is  isomorphic to $ C^*(\Bbb N ) $  by  using  Coburn's
result \cite{jang}.
    Even  though      the  semigroup   $ \mathrm{P}\rtimes\Bbb N^{\times}$
gives  a  partial  order on the semi-direct  product  group  $\mathbb{Q} \rtimes \mathbb{Q}^{*}_{+}$,  but ( $\mathbb{Q} \rtimes \mathbb{Q}^{*}_{+}$,  $\mathrm{P}  \rtimes\Bbb  N ^{\times}$) is  not  quasi-lattice  ordered   group.   However 
  we  define  a  covariant  isometric  representation  on $ \mathrm{P}  \rtimes \Bbb  N ^{\times}$ in the similiar  way  of  Nica's   covariant  isometric representation  for  a  quasi-lattice  ordered  group  and   consider    the  $C^*$-algebra   $ \mathcal{T} (\mathrm{P}  \rtimes \Bbb  N ^{\times})$
   generated    by   the  canonical  covariant  isometric representation  on $ \mathrm{P}  \rtimes \Bbb  N ^{\times}$.
     We  get  a  few  results   of  the  $C^*$-algebra  $ \mathcal{T} (\mathrm{P}  \rtimes \Bbb  N ^{\times})$,
  and   we  can also show  how   the  KMS  state   of  the  natural  dynamical  system  $( \mathcal{T} (\mathrm{P}  \rtimes \Bbb  N ^{\times}),  \mathbb R,  \sigma)$  acts.


\section{A Non-quasi-lattice  order  on   $\mathbb{Q} \rtimes \mathbb{Q}^{*}_{+}$}
\label{intro}

Let  $\mathbb{Q} \rtimes \mathbb{Q}^{*}_{+}$ denote the semidirect product of the additive rationals  $\mathbb Q$  by the multiplicative positive rationals $\mathbb{Q}^{*}_{+},$ where the group operation and inverse are given by

$$ (r,x)(s,y) = (r+xs, ~xy)   \hskip  2pc \text  {for} ~ r,s      \in \mathbb{Q}  ~~ \text {and}~~ x,y \in \mathbb{Q}^{*}_{+},  $$ 
$$ (r,x)^{-1}=(-x^{-1}r, ~x^{-1})   \hskip  2pc \text  {for}  ~   r \in \mathbb{Q}   ~~ \text {and} ~~   x \in \mathbb{Q}^{*}_{+}. $$
Let $ \mathrm{P}=  \{0,2,3, \cdots \}$ be  a semigroup  of  $\mathbb Z$.  Then the semidirect  product   $ \mathrm{P} \rtimes \mathbb{N}^{\times}$ is  the subsemigroup  of    $\mathbb{Q} \rtimes \mathbb{Q}^{*}_{+}$.

\begin{proposition} 
The semigroup   $ \mathrm{P}\rtimes \mathbb{N}^{\times}$ is  a  generating semigroup   of   $\mathbb{Q} \rtimes \mathbb{Q}^{*}_{+}$  and  the elements $(2,1), (3,1)$, and $\{ (0,p) :  p~is~prime\}$  satisfy the  relations
$$(0,p)(2,1)=(2,1)^{p}(0,p),~(0,p)(3,1)=(3,1)^{p}(0,p), ~  \text    {and} ~(0,p)(0,q)=(0,q)(0,p)$$
for all  prime  numbers  $ p$  and  $ q.$
\end{proposition}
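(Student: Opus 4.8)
The plan is to treat the two assertions separately: first that $\mathrm{P}\rtimes\mathbb{N}^{\times}$ generates $\mathbb{Q}\rtimes\mathbb{Q}^{*}_{+}$ as a group, and then that the displayed relations hold by a direct computation with the group law $(r,x)(s,y)=(r+xs,\,xy)$.

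For the generating statement I would argue in stages inside the group $\mathbb{Q}\rtimes\mathbb{Q}^{*}_{+}$. Since $3-2=1$, one has $(1,1)=(3,1)(2,1)^{-1}=(3,1)(-2,1)$, so the cyclic subgroup generated by $(1,1)$ already contains every $(n,1)$ with $n\in\mathbb{Z}$. Likewise, each $k\in\mathbb{N}^{\times}$ is a product of primes and $(0,p)(0,q)=(0,pq)$, so all $(0,k)$ with $k\in\mathbb{N}^{\times}$ lie in $\mathrm{P}\rtimes\mathbb{N}^{\times}$, and inverting them gives every $(0,x)$ with $x\in\mathbb{Q}^{*}_{+}$. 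The one point that needs a short computation is the recovery of non-integer first coordinates: conjugating a translation by a dilation, $(0,k)^{-1}(m,1)(0,k)=(0,1/k)(m,1)(0,k)=(m/k,1)$, so in fact $(r,1)$ lies in the generated subgroup for every $r\in\mathbb{Q}$. Finally $(r,1)(0,x)=(r,x)$, and therefore every element of $\mathbb{Q}\rtimes\mathbb{Q}^{*}_{+}$ is obtained, which proves the first claim. (The same bookkeeping shows that $\mathrm{P}\rtimes\mathbb{N}^{\times}$ itself is generated as a semigroup by $(2,1)$, $(3,1)$ and the $(0,p)$, using that every integer $\geq 2$ is a nonnegative integer combination of $2$ and $3$, so that $(n,1)=(2,1)^{a}(3,1)^{b}$ when $n=2a+3b$, and $(n,k)=(n,1)(0,k)$.)

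For the relations I would simply substitute into the semidirect product law. One computes $(0,p)(2,1)=(2p,p)$ and $(2,1)^{p}(0,p)=(2p,1)(0,p)=(2p,p)$, hence the first identity; identically $(0,p)(3,1)=(3p,p)=(3,1)^{p}(0,p)$; and $(0,p)(0,q)=(0,pq)=(0,qp)=(0,q)(0,p)$ because multiplication of natural numbers is commutative.

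There is no real obstacle here. The only place that calls for any thought is the generation argument, namely noticing that the non-integer translations are produced by conjugating by the dilation elements $(0,k)$ rather than by the translations $(n,1)$ alone; everything else is routine manipulation of the group operation.
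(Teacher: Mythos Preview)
Your proof is correct and follows essentially the same strategy as the paper: both observe that $(1,1)$ can be written using $(2,1)$ and $(3,1)$ (you write $(1,1)=(3,1)(2,1)^{-1}$, the paper writes $(2,1)^{-1}(3,1)=(1,1)$), and then reduce to the known generating set $\{(1,1)\}\cup\{(0,p):p\text{ prime}\}$ for $\mathbb{Q}\rtimes\mathbb{Q}^{*}_{+}$. The only difference is that the paper simply cites \cite{LR3} for the fact that $(1,1)$ and the $(0,p)$ generate $\mathbb{Q}\rtimes\mathbb{Q}^{*}_{+}$, whereas you supply that argument explicitly via the conjugation $(0,k)^{-1}(m,1)(0,k)=(m/k,1)$; this makes your version self-contained but is not a genuinely different route.
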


\begin{proof}
The group $\mathbb{Q} \rtimes \mathbb{Q}^{*}_{+}$ is generated by elements $(1,1)$ and $\{ (0,p) :  p$ is prime\} which satisfy the relations
$$(0,p)(1,1)=(1,1)^{p}(0,p)~\text {and}~ (0,p)(0,q)=(0,q)(0,p)$$
for all  prime  numbers  $ p, q$
and this is a presentation of $\mathbb{Q} \rtimes \mathbb{Q}^{*}_{+}$  in \cite{LR3}.

We shall consider the unital subsemigroup $\mathrm{P} \rtimes \mathbb{N}^{\times} $ of $\mathbb{Q} \rtimes \mathbb{Q}^{*}_{+}$ interpreted in the category of monoids where $\mathrm{P}=\{0,2,3,4,5,\cdots\}$. Since $(2,1)^{-1}(3,1)=(1,1)$   in  
 $\mathbb{Q} \rtimes \mathbb{Q}^{*}_{+}$ , $\mathrm{P} \rtimes \mathbb{N}^{\times}$ can generate $\mathbb{Q} \rtimes \mathbb{Q}^{*}_{+}$.  Furthermore  we  see  that  $\mathrm{P} \rtimes \mathbb{N}^{\times}$is generated by the elements $(2,1),$  $(3,1),$ and $\{ (0,p) :  p~is~prime\}$ which satisfy the relations $(0,p)(2,1)=(2,1)^{p}(0,p)$,~$(0,p)(3,1)=(3,1)^{p}(0,p),$~and~ $(0,p)(0,q)=(0,q)(0,p)$ for all prime numbers   $p,q$.

\end{proof}

 Since $(\mathrm{P} \rtimes \mathbb{N}^{\times}) \cap (\mathrm{P} \rtimes \mathbb{N}^{\times})^{-1} = \{(0,1)\},$ the subsemigroup $\mathrm{P} \rtimes \mathbb{N}^{\times}$ induces a left-invariant partial order on $\mathbb{Q} \rtimes \mathbb{Q}^{*}_{+}$ as follows : for $(r,x)$ and $(s,y)$ in $\mathbb{Q} \times \mathbb{Q}^{*}_{+},$
\begin{eqnarray} \label{eq:order}
 (r,x) \leq (s,y) &\Leftrightarrow &
(r,x)^{-1} (s,y) \in \mathrm{P} \rtimes \mathbb{N}^{\times} \nonumber \\
&\Leftrightarrow& x^{-1}(s-r) \in \mathrm{P} ~\text {and}~ x^{-1}y \in \mathbb{N}^{\times}. \label{9}
\end{eqnarray}

\begin{remark} 
The pair $(\mathbb{Q} \rtimes \mathbb{Q}^{*}_{+},~\mathrm{P}\rtimes \mathbb{N}^{\times})$ is not a quasi-lattice ordered group.
\end{remark}

It is  sufficient   to show that there are two elements  in $\mathrm{P} \rtimes \mathbb{N}^{\times}$   with  common upper bounds in $\mathrm{P} \rtimes \mathbb{N}^{\times}$   
which don't  have   the   least common upper bound in  $\mathrm{P} \rtimes \mathbb{N}^{\times}$.    We  consider  two  elements  $(5, 7)$ and $(2, 3).$
Suppose  that  $(k, c) \in \mathrm{P} \rtimes \mathbb{N}^{\times}$,  $(5, 7)\leq (k, c)$,  and  $(2, 3)\leq (k, c).$
Then from (\ref{eq:order}) we have $k \in 5+7\mathrm{P}$, $k \in 2+3\mathrm{P},$ and $c \in 21\mathbb{N}^{\times}.$
Since \[\begin{cases}
k \equiv 5~(\text{mod}~7) ,& k \neq 12, \\
k \equiv 2~(\text{mod}~3) ,& k \neq 5, 
\end{cases}\]
we  see  that   $k=26, 47, 68, \cdots $and  $ c= 21, 42, 63,  \cdots$.   
If  $(5, 7)$ and $(2, 3)$ have the  least  common upper bound, it  should  be $(26, 21)$ or $(47, 21).$  
But   $(26, 21)$  and $(47, 21)$ are  not comparable  in $\mathbb{Q} \rtimes \mathbb{Q}^{*}_{+}$,   so $(5, 7)$ and $(2, 3)$  do not have their least common upper bound in  $\mathrm{P} \rtimes \mathbb{N}^{\times}$. Therefore  $\mathrm{P} \rtimes \mathbb{N}^{\times}$ is not quasi-lattice ordered group.

  We  will  denote  the smallest one  among  common upper bounds of $(r,x)$ and $(s,y)$  by   $(r,x) \Cup (s,y)$    in the usual order in $\mathbb{Q} \rtimes \mathbb{Q}^{*}_{+}$.

\begin{remark} 
We see  that  two elements $(m,a)$ and $(n,b)$ of $\mathrm{P} \rtimes \mathbb{N}^{\times}$ have a common upper bound if and only if the set $(m+a\mathrm{P})\cap (n+b\mathrm{P})$ is nonempty.
  We see that
\begin{equation*}
(m,a) \Cup (n,b) = \begin {cases}
\infty & \text {if ~$(m+a\mathrm{P})$ $\cap$ $(n+b \mathrm{P})$ = $\emptyset$,}\\
(\ell, lcm(a,b)) & \text {if~$(m+a\mathrm{P})$ $\cap$ $(n+b \mathrm{P})$ $\neq \emptyset$,}\\
\end{cases}
\end{equation*}
where $\ell$ is the smallest element of $(m+a\mathrm{P})\cap (n+b\mathrm{P})$ in the usual order. 
\end{remark}

 The next proposition  shows  how the Euclidean algorithm is related 
  with  the  further discussion  of  this  paper.
 Recall that   $gcd(a,b)$  and  $lcm(c, d)$  is the  greatest  common  divisor  of $a$ and $b$   in  $ \mathbb  N$ and  the  least    common  multiple   of $c$ and $d$    in  $ \mathbb  N$, respectively.

\begin{proposition} \label{pr:cal}
Suppose that $(m,a)$ and $(n,b)$ are in $\mathrm{P} \rtimes \mathbb{N}^{\times}.$ Then \\
$(1)$ $(m+a\mathrm{P}) \cap (n+b\mathrm{P})$ is nonempty if and only if $gcd(a,b) \mid m-n.$ \\
$(2)$ If  $(m+a\mathrm{P}) \cap (n+b\mathrm{P})$is nonempty  and  we  denote  $a^{'}=a/gcd(a,b),$ ~ $b^{'}=b/gcd(a,b),$ and $(\alpha,~\beta)$ is the smallest non-negative solution  of $(n-m)/gcd(a,b) = \alpha a^{'} - \beta b^{'}$ with $\alpha \neq 1$  and  $\beta \neq 1$,
then $\ell := m+a\alpha =n+b\beta$ is the smallest element of $(m+a\mathrm{P}) \cap(n+b\mathrm{P})$ and we have 
$$(m,a) \Cup (n,b) = (\ell, lcm(a,b)),$$
$$(m,a)^{-1}(\ell, lcm(a,b))=(a^{-1}(\ell -m), a^{-1}lcm(a,b))=(\alpha, b^{'}), \text{and}$$
$$(n,b)^{-1}(\ell, lcm(a,b))=(b^{-1}(\ell -n), b^{-1}lcm(a,b))=(\beta, a^{'}).$$
\end{proposition}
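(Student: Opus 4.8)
The plan is to reduce everything to the classical theory of linear Diophantine equations, with the only subtlety being the constraint that the coefficients $\alpha,\beta$ must land in $\mathrm{P}=\{0,2,3,\dots\}$ rather than in $\mathbb N$. First I would handle part $(1)$: an element lies in $(m+a\mathrm{P})\cap(n+b\mathrm{P})$ precisely when there exist $s,t\in\mathrm{P}$ with $m+as=n+bt$, i.e. $as-bt=n-m$. Ignoring for a moment the restriction $s,t\in\mathrm{P}$ versus $s,t\in\mathbb N$, the equation $ax-by=n-m$ has an integer solution iff $\gcd(a,b)\mid m-n$, which is the stated criterion. So the real content of $(1)$ is that solvability over $\mathbb Z$ already forces solvability with both unknowns in $\mathrm{P}$: once one has a single integer solution $(x_0,y_0)$, the full solution set is $(x_0+kb',\,y_0+ka')$ for $k\in\mathbb Z$, where $a'=a/\gcd(a,b)$, $b'=b/\gcd(a,b)$; since $a',b'\ge 1$, pushing $k$ large and positive makes both coordinates as large as we like, and in particular we can avoid the single forbidden value $1$ and the value could-be-$1$ issue, landing both in $\mathrm{P}$. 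I would spell this out: choose $k$ so that $x_0+kb'\ge 2$ and $y_0+ka'\ge 2$, which is possible because $a',b'$ are positive integers.

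For part $(2)$, I would first normalize by dividing the equation $as-bt=n-m$ through by $d:=\gcd(a,b)$, getting $a's-b't=(n-m)/d$ with $\gcd(a',b')=1$. The parametrized solution family is $s=s_0+kb'$, $t=t_0+ka'$; I want the one minimizing $\ell=m+as=m+ad\,s$ (equivalently minimizing $s$, equivalently minimizing $t$, since $s$ and $t$ increase together with $k$) subject to $s,t\in\mathrm{P}$. This is where the hypothesis about $(\alpha,\beta)$ being "the smallest non-negative solution with $\alpha\neq1$ and $\beta\neq1$" enters: it names exactly this minimizer. I would verify that the minimal $k$ making $s_0+kb'\ge 0$, $t_0+ka'\ge 0$, and both $\neq 1$ does exist and is unique, hence $\alpha,\beta$ are well-defined elements of $\mathrm{P}$, and then $\ell=m+a\alpha=n+b\beta$ is by construction the smallest element of $(m+a\mathrm{P})\cap(n+b\mathrm{P})$ in the usual order. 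Combined with Remark preceding this proposition, this gives $(m,a)\Cup(n,b)=(\ell,\,\mathrm{lcm}(a,b))$ immediately, since $\mathrm{lcm}(a,b)=ab/d=ab'=a'b$.

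The three displayed identities are then pure computation in the group $\mathbb Q\rtimes\mathbb Q^*_+$ using the inverse formula $(r,x)^{-1}=(-x^{-1}r,x^{-1})$ and the product rule. For instance $(m,a)^{-1}(\ell,\mathrm{lcm}(a,b)) = (-a^{-1}m,a^{-1})(\ell,\mathrm{lcm}(a,b)) = (a^{-1}(\ell-m),\,a^{-1}\mathrm{lcm}(a,b))$; then substitute $\ell-m=a\alpha$ to get the first coordinate $\alpha$, and $a^{-1}\mathrm{lcm}(a,b)=a^{-1}\cdot ab'=b'$ to get the second, yielding $(\alpha,b')$. The computation for $(n,b)^{-1}(\ell,\mathrm{lcm}(a,b))=(\beta,a')$ is symmetric, using $\ell-n=b\beta$ and $b^{-1}\mathrm{lcm}(a,b)=a'$.

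The main obstacle, and the only place requiring care rather than bookkeeping, is justifying the awkward side conditions $\alpha\neq 1$, $\beta\neq 1$ in the statement of the minimal solution: this reflects that $1\notin\mathrm{P}$, so a Diophantine solution with $\alpha=1$ or $\beta=1$ is not admissible and must be discarded in favor of the next one in the arithmetic progression. I would need to check that skipping the at-most-one value of $k$ that produces $\alpha=1$ and the at-most-one value producing $\beta=1$ still leaves a well-defined smallest admissible $k$ — i.e. that these exceptional $k$'s do not somehow eliminate the infimum — which is clear since the admissible set $\{k : s_0+kb'\in\mathrm{P},\ t_0+ka'\in\mathrm{P}\}$ is a cofinite subset of a ray in $\mathbb Z$ and hence has a minimum. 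I would close by noting that this minimal $k$ gives the claimed $\ell$, completing the proof.
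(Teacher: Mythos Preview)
Your proposal is correct and follows essentially the same route as the paper's proof: reduce part~(1) to the classical solvability criterion for $ax-by=n-m$ over $\mathbb Z$, parametrize the full solution set as an arithmetic progression, pick the smallest admissible solution (with the $\alpha\neq 1$, $\beta\neq 1$ constraint encoding $1\notin\mathrm P$), and then verify the displayed group identities by direct computation. The paper's own proof is much terser---it simply asserts $(m+a\mathrm P)\cap(n+b\mathrm P)\neq\emptyset \Leftrightarrow (m+a\mathbb Z)\cap(n+b\mathbb Z)\neq\emptyset$ and that the minimal solution gives the minimal common value---so your version is a more careful expansion of the same argument, with the useful addition of explicitly justifying why integer solvability upgrades to solvability in $\mathrm P$.
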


\begin{proof}
$(m+a\mathrm{P}) \cap (n+b\mathrm{P}) \neq \emptyset \Longleftrightarrow (m+a\mathbb{Z}) \cap (n+b\mathbb{Z}) \neq \emptyset \Longleftrightarrow m \equiv n$ $(\!\!\!\!\mod gcd(a,b)).$ Then every solution of $(n-m)/gcd(a,b)=\alpha a^{'}-\beta b^{'}$ satisfies $m+a\alpha = n+b\beta$ and the smallest non-negative solution of $(n-m)/gcd(a,b)=\alpha a^{'}-\beta b^{'}$ gives  the smallest common value. The rest two formulas  can  be  gotten   by  the  easy calculation.
\end{proof}

\section{The generalized  Toeplitz  algebra   $  {\mathcal{T}}(\mathrm{P} \rtimes \mathbb{N}^{\times})$}
\label{intro}

   In  this section,  we  construct  the  $C^*$-algebra  $\mathcal{T} (\mathrm{P} \rtimes \mathbb{N}^{\times})$   generated  by   an  isometric  representation  of $\mathrm{P} \rtimes \mathbb{N}^{\times}$ and    analyze its  structure   by    thoughts    from \cite{LR1}.
First,  we  introduce    the  isometric  representation of  a  discrete semigroup  $M$; 

let  $M$   denote a
 semigroup with unit $e$ and
  $\mathcal B$ be a unital $C^*$-algebra.  A map
$\mathrm{W}: M \to \mathcal B, x \mapsto \mathrm{W}_x$  is called an {\it isometric homomorphism} if
 $~~\mathrm{W}_e = 1$, $\mathrm{W}_x$ is an isometry and
$\mathrm{W}_{xy} = \mathrm{W}_x\mathrm{W}_y$ for all $x, y \in M.$
If $\mathcal B$ is the $*$-algebra $ {\mathcal B}(H)$ of all bounded linear
operators of a non-zero Hilbert space $H$,
 we call $(H,\mathrm{W})$ an  {\it isometric representation} of $M$.

Nica  introduced the  covariant isometric  representation of  a quasi-lattice  ordered  group  as  follows:
for a  quasi-lattice  ordered  group $M$  an  isometric  representation $V  : M  \to {\mathcal B}(H)$  is Nica  {\it covariant} if
$$V_{x} V^{*}_{x}   V_yV^*_y\\
 = \begin{cases}
0 & if~~~x \vee y  =\infty,\\
V_{x \vee y}V^*_{x \vee y} &  if ~~~x \vee y  < \infty 
\end{cases}
$$
where $x \vee y$ is the least common upper bound of $x$ and $y$ in $M.$
  It  is  known  that  Nica's  covariance     is  a  very  suitable  isometric  representation  to  explain the uniqueness  property  of  $C^*$-algebras generated  by isometric  representations. 
Even  though $(\mathbb{Q} \rtimes \mathbb{Q}^{*}_{+},~\mathrm{P}\rtimes \mathbb{N}^{\times})$ is  not  quasi-lattice  ordered  group, we  can  define the  covariant isometric  representaion  of  $\mathrm{P}\rtimes \mathbb{N}^{\times}$  in the  sence  of  Nica's   covariant  isometric  representation.

\begin{definition} 
A isometric  representation  $\mathrm{W} : \mathrm{P} \rtimes \mathbb{N} ^{\times} \to  \mathcal{B}(\mathrm{H})$  of   $\mathrm{P} \rtimes \mathbb{N} ^{\times}$  
on a Hilbert space   $H$ is     covariant if it satisfies
\begin{equation} 
\mathrm{W}_{(m,a)}\mathrm{W}^{*}_{(m,a)}\!\mathrm{W}_{(n,b)}\mathrm{W}^{*}_{(n,b)}\\
 = \begin{cases}
0 & if ~(m+a\mathrm{P})\cap(n+b \mathrm{P}) = \emptyset,\\
\!\mathrm{W}_{(m,a)\Cup(n,b)}\mathrm{W}^{*}_{(m,a)\Cup(n,b)}\!\!\!\!&if ~(m+a\mathrm{P})\cap(n+b \mathrm{P}) \neq \emptyset.
\end{cases}
\end{equation}
\end{definition}
 We use the notation  $\mathrm{W}_{\infty}=0 $ when  $(m,a)  \Cup (n,b) = {\infty}, $  thus we can   always  write  
$$\mathrm{W}_{(m,a)}\mathrm{W}_{(m,a)}^{*}\mathrm{W}_{(n,b)}\mathrm{W}_{(n,b)}^{*}=\mathrm{W}_{(m,a)\Cup(n,b)}\mathrm{W}^{*}_{(m,a)\Cup(n,b)}$$
 for all   $(m,a), (n,b)\in \mathrm{P} \rtimes \mathbb{N} ^{\times}.$
With this convention, the covariant condition is equivalent to 
\begin{equation}   \label{def:nica}
\mathrm{W}_{(m,a)}^{*}\mathrm{W}_{(n,b)} = \mathrm{W}_{(m,a)^{-1}\sigma}\mathrm{W}_{(n,b)^{-1}\sigma}^{*}
\end{equation}
  for all   $(m,a), (n,b)\in \mathrm{P} \rtimes \mathbb{N} ^{\times}~\text{where}~\sigma= (m,a) \Cup (n,b)$.

The motivation  of  the   condition  of   the covariant   isometric  representation is  the range projections of  the left regular isometric  representation of  a  semigroup  $M$.  Nica  called  it  {\it the Wiener-Hopf representation.} 
  The  left  regular isometric representation on  the  discrete  semigroup $M$   is given by
\begin{eqnarray*}
\mathcal{L}_{m}\delta_{n} = \delta_{mn}  \hskip  1pc   \text  {for}  ~~ m, n  \in  M 
\end{eqnarray*}
where~ $\{\delta_{n}: n \in   M  \}$ is the canonical orthonormal basis for $\ell^{2}(M).$  
The  left  regular isometric representation on $\ell^{2}(\mathrm{P} \rtimes \mathbb{N} ^{\times})$ is similarily  defined  as  follows:
\begin{eqnarray*}
\mathcal{L}_{(m,a)}\delta_{(n,b)} = \delta_{(m,a)(n,b)}  \hskip  1pc   \text  {for}  ~~(m,a),(n,b)\in \mathrm{P} \rtimes \mathbb{N} ^{\times}
\end{eqnarray*}
where  $\{\delta_{(n,b)} :(n,b) \in \mathrm{P} \rtimes \mathbb{N} ^{\times}\}$ is the canonical orthonormal basis for $\ell^{2}(\mathrm{P} \rtimes \mathbb{N} ^{\times}).$  
  Though  the $C^{*}$-algebra generated by the  left  regular  isometric representation  is  called  in  several    ways,
   we  call  it  the reduced  semigroup  $C^{*}$-algebra and denote  it  by $\mathcal{C}_{red}(\mathrm{P} \rtimes \mathbb{N} ^{\times}).$

 By  similar way   in \cite{LR1}  we  can  have   another semigroup $C^*$-algebra generated  by a  covariant  isometric  representation of 
$ \mathrm{P} \rtimes \mathbb{N} ^{\times}$.

\begin{definition} 
 The universal $C^*$-algebra for covariant isometric representations of ~$ \mathrm{P} \rtimes \mathbb{N} ^{\times},$ denoted by 
$ \mathcal{T}(\mathrm{P} \rtimes \mathbb{N} ^{\times})$, is the $C^{*}$-algebra generated by   the canonical covariant isometric representation $\mathrm{W}: \mathrm{P} \rtimes \mathbb{N} ^{\times} \to  \mathcal{T}(\mathrm{P} \rtimes \mathbb{N} ^{\times})$ with the following proprety :
if $~\mathrm{X}$ is a covariant isometric representation of $\mathrm{P} \rtimes \mathbb{N} ^{\times}$,  then there is a homomorphism $\pi :  \mathcal{T} (\mathrm{P} \rtimes \mathbb{N} ^{\times}) \to C^{*}(\{\mathrm{X}_{(m,a)}:(m,a)\in  \mathrm{P} \rtimes \mathbb{N} ^{\times}\})$ such that $\pi(\mathrm{W}_{(m,a)})=\mathrm{X}_{(m,a)}.$
We call    $ \mathcal{T}(\mathrm{P} \rtimes \mathbb{N} ^{\times})$ the generalized  Toeplitz  algebra of $\mathrm{P} \rtimes \mathbb{N} ^{\times}$.
\end{definition}

\begin{proposition} \label{def:uni}
Let $\mathrm{W}:\mathrm{P} \rtimes \mathbb{N} ^{\times} \to\mathcal{B}(\ell^{2}(\mathrm{P} \rtimes \mathbb{N} ^{\times}))$ be the canonical  covariant  isometric representaton of $\mathrm{P} \rtimes \mathbb{N} ^{\times}$ and  
$  \mathcal{T}(\mathrm{P} \rtimes \mathbb{N} ^{\times})$ be the universal $C^{*}$-algebra generated by the canonical  covariant  isometric     representation  $W$.  Then the linear span of $\{\mathrm{W}_{(m,a)}\mathrm{W}_{(n,b)}^{*}:(m,a),(n,b) \in \mathrm{P} \rtimes \mathbb{N} ^{\times} \}$ is a dense $*$-subalgebra of  $ \mathcal{T}(\mathrm{P} \rtimes \mathbb{N} ^{\times}).$
\end{proposition}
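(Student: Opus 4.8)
The plan is to show that the linear span
\[
\mathcal{S} := \operatorname{span}\{\mathrm{W}_{(m,a)}\mathrm{W}_{(n,b)}^{*} : (m,a),(n,b)\in \mathrm{P}\rtimes\mathbb{N}^{\times}\}
\]
is itself a $*$-subalgebra of $\mathcal{T}(\mathrm{P}\rtimes\mathbb{N}^{\times})$. Once this is established the conclusion is immediate: $\mathcal{S}$ contains every generator, since $\mathrm{W}_{(m,a)} = \mathrm{W}_{(m,a)}\mathrm{W}_{(0,1)}^{*}$, hence $\mathcal{S}$ contains the $*$-algebra generated by $\mathrm{W}$, which is norm-dense in $\mathcal{T}(\mathrm{P}\rtimes\mathbb{N}^{\times})$ by the universal description of the algebra; therefore $\overline{\mathcal{S}} = \mathcal{T}(\mathrm{P}\rtimes\mathbb{N}^{\times})$. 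Closure of $\mathcal{S}$ under adjoints is trivial from $(\mathrm{W}_{(m,a)}\mathrm{W}_{(n,b)}^{*})^{*} = \mathrm{W}_{(n,b)}\mathrm{W}_{(m,a)}^{*}$, so the entire content of the proof is closure under multiplication, and by bilinearity it suffices to handle one product $\mathrm{W}_{(m,a)}\mathrm{W}_{(n,b)}^{*}\,\mathrm{W}_{(p,c)}\mathrm{W}_{(q,d)}^{*}$.

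For this product I would insert the covariance identity (\ref{def:nica}) at the two middle factors. Set $\sigma := (n,b)\Cup(p,c)$. If $(n+b\mathrm{P})\cap(p+c\mathrm{P}) = \emptyset$, then $\sigma = \infty$, the convention $\mathrm{W}_{\infty}=0$ forces $\mathrm{W}_{(n,b)}^{*}\mathrm{W}_{(p,c)}=0$, and $0 \in \mathcal{S}$. Otherwise, by the Remark and Proposition \ref{pr:cal}, $\sigma = (\ell, \operatorname{lcm}(b,c)) \in \mathrm{P}\rtimes\mathbb{N}^{\times}$ with $\sigma \geq (n,b)$ and $\sigma \geq (p,c)$, so by (\ref{eq:order}) both $(n,b)^{-1}\sigma$ and $(p,c)^{-1}\sigma$ lie in $\mathrm{P}\rtimes\mathbb{N}^{\times}$, and (\ref{def:nica}) gives $\mathrm{W}_{(n,b)}^{*}\mathrm{W}_{(p,c)} = \mathrm{W}_{(n,b)^{-1}\sigma}\mathrm{W}_{(p,c)^{-1}\sigma}^{*}$. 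Substituting this and regrouping, using the homomorphism property $\mathrm{W}_{x}\mathrm{W}_{y}=\mathrm{W}_{xy}$ together with its adjoint form $\mathrm{W}_{z}^{*}\mathrm{W}_{w}^{*} = (\mathrm{W}_{w}\mathrm{W}_{z})^{*} = \mathrm{W}_{wz}^{*}$, the product collapses to
\[
\mathrm{W}_{(m,a)\,(n,b)^{-1}\sigma}\;\mathrm{W}_{(q,d)\,(p,c)^{-1}\sigma}^{*},
\]
and both subscripts, being products of elements of $\mathrm{P}\rtimes\mathbb{N}^{\times}$, again lie in $\mathrm{P}\rtimes\mathbb{N}^{\times}$. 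Hence the product belongs to $\mathcal{S}$, and $\mathcal{S}$ is a $*$-subalgebra; combining with the first paragraph finishes the proof.

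\textbf{Main obstacle.} I expect the only delicate point to be the bookkeeping in the middle reduction: one must be sure that $\sigma = (n,b)\Cup(p,c)$ is legitimately defined for an \emph{arbitrary} pair (this is exactly what the $\Cup$-convention plus the Remark characterizing existence of a common upper bound provide), and that the "remainders" $(n,b)^{-1}\sigma$ and $(p,c)^{-1}\sigma$ genuinely sit inside $\mathrm{P}\rtimes\mathbb{N}^{\times}$, so that the reassembled word is of the asserted form $\mathrm{W}_{(\,\cdot\,)}\mathrm{W}_{(\,\cdot\,)}^{*}$; both facts are guaranteed by the partial-order description (\ref{eq:order}) and the explicit computation in Proposition \ref{pr:cal}. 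Everything else is the routine manipulation of reduced words in semigroup $C^{*}$-algebras as in \cite{LR1}.
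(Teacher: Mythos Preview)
Your proposal is correct and follows essentially the same approach as the paper: both use the covariance identity (\ref{def:nica}) to rewrite $\mathrm{W}_{(n,b)}^{*}\mathrm{W}_{(p,c)}$ as $\mathrm{W}_{(n,b)^{-1}\sigma}\mathrm{W}_{(p,c)^{-1}\sigma}^{*}$ (or $0$) and then collapse via the homomorphism property, arriving at exactly the same reduced form $\mathrm{W}_{(m,a)(n,b)^{-1}\sigma}\mathrm{W}_{(q,d)(p,c)^{-1}\sigma}^{*}$. Your framing is slightly tidier in that you explicitly record why $\mathcal{S}$ contains the generators and is closed under adjoints, whereas the paper jumps directly to the word-reduction computation, but the mathematical content is identical.
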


\begin{proof}
It is enough to show that $\mathrm{W}_{s_{1}}\mathrm{W}^{*}_{t_{1}}\mathrm{W}_{s_{2}}\mathrm{W}^{*}_{t_{2}}\cdots \mathrm{W}_{s_{n}}\mathrm{W}^{*}_{t_{n}}\mathrm{W}_{s_{n+1}}$ can be reduced to  $\mathrm{W}_{s}\mathrm{W}^{*}_{t}$ for $s_{i},~t_{i},~s,~t\in \mathrm{P} \rtimes \mathbb{N} ^{\times}.$ If $(m,a)$ and $(n,b)$ have $\sigma =(m,a)\Cup(n,b) \in \mathrm{P} \rtimes \mathbb{N} ^{\times}, $ then
\begin{eqnarray*}
\mathrm{W}^{*}_{(m,a)}\mathrm{W}_{(n,b)}&=&\mathrm{W}^{*}_{(m,a)}(\mathrm{W}_{(m,a)}\mathrm{W}^{*}_{(m,a)}\mathrm{W}_{(n,b)}\mathrm{W}^{*}_{(n,b)})\mathrm{W}_{(n,b)}\\
&=&\mathrm{W}^{*}_{(m,a)}\mathrm{W}_{\sigma}\mathrm{W}^{*}_{\sigma}\mathrm{W}_{(n,b)}\\
&=&\mathrm{W}^{*}_{(m,a)}\mathrm{W}_{(m,a)}\mathrm{W}_{(m,a)^{-1}\sigma}\mathrm{W}^{*}_{(n,b)^{-1}\sigma}\mathrm{W}^{*}_{(n,b)}\mathrm{W}_{(n,b)}\\
&=&\mathrm{W}_{(m,a)^{-1}\sigma}\mathrm{W}^{*}_{(n,b)^{-1}\sigma}~.
\end{eqnarray*}
If $(m,a)$ and $(n,b)$ don't have $(m,a)\Cup(n,b) $ in $\mathrm{P} \rtimes \mathbb{N} ^{\times},$ we have
$$\mathrm{W}_{(m,a)}\mathrm{W}^{*}_{(m,a)}\mathrm{W}_{(n,b)}\mathrm{W}^{*}_{(n,b)}=0.\\$$
 So  we  can  see 
\begin{eqnarray*}
\mathrm{W}_{(m,a)}\mathrm{W}^{*}_{(n,b)}\mathrm{W}_{(s,t)}\mathrm{W}^{*}_{(u,v)}&=&\mathrm{W}_{(m,a)}\mathrm{W}_{(n,b)^{-1}\sigma}\mathrm{W}^{*}_{(s,t)^{-1}\sigma}\mathrm{W}^{*}_{(u,v)}\\
&=&\mathrm{W}_{(m,a)(n,b)^{-1}\sigma}\mathrm{W}^{*}_{(u,v)(s,t)^{-1}\sigma}\end{eqnarray*}
   where $\sigma~=~(n,b)\Cup (s,t) \in \mathrm{P} \rtimes \mathbb{N} ^{\times},$
 it  follows  that we  can  reduce  $\mathrm{W}_{s_{1}}\mathrm{W}^{*}_{t_{1}}\mathrm{W}_{s_{2}}\mathrm{W}^{*}_{t_{2}}\cdots \mathrm{W}_{s_{n}}\mathrm{W}^{*}_{t_{n}}\mathrm{W}_{s_{n+1}}$   to  $\mathrm{W}_{s}\mathrm{W}^{*}_{t}.$
\end{proof}

\begin{theorem} \label{th:T6} 
Let $\mathcal{A}$ be the universal $C^{*}$-algebra generated by isometries $s$,  $t$, and $\{\upsilon_{p}~:~p ~is~ prime\}$ satisfying the relations\\
$(\mathrm{R1})$~~~$t^{2}=s^{3}$,\\
$(\mathrm{R2})$~~~$ts=st,$~~$s^{*}t=ts^{*},$ and~~$t^{*}s=st^{*}$,\\
$(\mathrm{T1})$~~~$\upsilon_{p}s=s^{p}\upsilon_{p},$~~$\upsilon_{p}t=t^{p}\upsilon_{p},$ ~~$\upsilon_{p}s^{*}=s^{*p}\upsilon_{p},$ \text {and}~~$\upsilon_{p}t^{*}=t^{*p}\upsilon_{p}$, \\
$(\mathrm{T2})$~~~$\upsilon_{p}\upsilon_{q}=\upsilon_{q}\upsilon_{p}$,\\
$(\mathrm{T3})$~~~$\upsilon_{p}^{*}\upsilon_{q}=\upsilon_{q}\upsilon_{p}^{*}$ when $p\neq q$,\\
$(\mathrm{T4})$~~~$s^{*}\upsilon_{p}=s^{p-1}\upsilon_{p}s^{*}$,\\
$(\mathrm{T5})$~~~$\upsilon_{p}^{*}s^{k_{1}}t^{k_{2}}\upsilon_{p}=0$~for~ $1\leq 2k_{1}+3k_{2}<p$, \\
$(\mathrm{T6})$~~~$\upsilon_{p}^{*}s^{k}\upsilon_{p}=0$~for ~$1 \leq k <p$~when~ $p \neq2,$ and~~ $\upsilon_{2}^{*}s\upsilon_{2} =ts^{*},$
$\upsilon_{p}^{*}t^{k}\upsilon_{p}=0$~for ~$1\leq k <p$~when~$ p \neq3,$ 
$\upsilon_{3}^{*}t\upsilon_{3}=ts^{*},$    $~\upsilon_{3}^{*}t^{2}\upsilon_{3}=s,$  and  $\upsilon_{p}^{*}t^{k}s^{*k}\upsilon_{p}=0$~for~ $1\leq k<p$.\\
Then there is a homomorphism $\rho_{\mathrm{W}}$ of $\mathcal{A}$ into  $\mathcal{T}(\mathrm{P} \rtimes \mathbb{N} ^{\times})$ such that
 $\rho_{\mathrm{W}}(s)=\mathrm{W}_{(2,1)},~\rho_{\mathrm{W}}(t)=\mathrm{W}_{(3,1)}$,  and $\rho_{\mathrm{W}}(\upsilon_{p})=\mathrm{W}_{(0,p)}$ for every prime $p.$
\end{theorem}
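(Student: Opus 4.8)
The plan is to invoke the universal property built into the definition of $\mathcal{A}$. Since $\mathcal{A}$ is the universal $C^{*}$-algebra generated by isometries satisfying $(\mathrm{R1})$, $(\mathrm{R2})$ and $(\mathrm{T1})$--$(\mathrm{T6})$, it suffices to check that the elements $\mathrm{W}_{(2,1)}$, $\mathrm{W}_{(3,1)}$ and $\mathrm{W}_{(0,p)}$ ($p$ prime) of $\mathcal{T}(\mathrm{P}\rtimes\mathbb{N}^{\times})$ are isometries and satisfy the relations obtained from $(\mathrm{R1})$, $(\mathrm{R2})$ and $(\mathrm{T1})$--$(\mathrm{T6})$ under the substitution $s\mapsto\mathrm{W}_{(2,1)}$, $t\mapsto\mathrm{W}_{(3,1)}$, $\upsilon_{p}\mapsto\mathrm{W}_{(0,p)}$; the homomorphism $\rho_{\mathrm{W}}$ is then the one furnished by the universal property, and it takes the asserted values on generators by construction. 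That the three elements are isometries is immediate, since $\mathrm{W}$ is an isometric representation. Throughout I would argue inside the universal algebra, using only the three structural facts available there: the homomorphism identities $\mathrm{W}_{xy}=\mathrm{W}_{x}\mathrm{W}_{y}$, $\mathrm{W}_{(0,1)}=1$; the isometry identities $\mathrm{W}_{x}^{*}\mathrm{W}_{x}=1$; and the covariance relation \eqref{def:nica}, $\mathrm{W}_{(m,a)}^{*}\mathrm{W}_{(n,b)}=\mathrm{W}_{(m,a)^{-1}\sigma}\mathrm{W}_{(n,b)^{-1}\sigma}^{*}$ with $\sigma=(m,a)\Cup(n,b)$ and the convention $\mathrm{W}_{\infty}=0$. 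It is worth stressing that one cannot shortcut this by computing in the left regular representation, which is not covariant here precisely because $(\mathbb{Q}\rtimes\mathbb{Q}^{*}_{+},\mathrm{P}\rtimes\mathbb{N}^{\times})$ is not quasi-lattice ordered.

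The relations built purely from products of generators follow at once from $\mathrm{W}$ being a homomorphism together with the corresponding identities in the semigroup $\mathrm{P}\rtimes\mathbb{N}^{\times}$ (these are essentially the relations among $(2,1)$, $(3,1)$, $(0,p)$ recorded in Proposition~2.1): for instance $(3,1)^{2}=(6,1)=(2,1)^{3}$ gives $(\mathrm{R1})$, $(2,1)(3,1)=(5,1)=(3,1)(2,1)$ gives $ts=st$, $(0,p)(2,1)=(2p,p)=(2,1)^{p}(0,p)$ and $(0,p)(3,1)=(3p,p)=(3,1)^{p}(0,p)$ give the two adjoint-free identities of $(\mathrm{T1})$, and $(0,p)(0,q)=(0,pq)=(0,q)(0,p)$ gives $(\mathrm{T2})$. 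Here it is exactly the fact that $\mathrm{P}=\{0,2,3,\dots\}$ is the numerical semigroup generated by $2$ and $3$ that guarantees all elements occurring along the way — $(2k_{1}+3k_{2},1)$, $(2k,p)$, $(3k,p)$, and so on — actually lie in $\mathrm{P}\rtimes\mathbb{N}^{\times}$.

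For the relations containing an adjoint I would first pull every $\mathrm{W}_{(0,p)}$ to the outside and gather the powers of $\mathrm{W}_{(2,1)}$ and $\mathrm{W}_{(3,1)}$ (using $s^{*}t=ts^{*}$ from $(\mathrm{R2})$ where needed, e.g.\ to rewrite $t^{k}s^{*k}=s^{*k}t^{k}$ in the last clause of $(\mathrm{T6})$), reducing each relation to the evaluation of a single product $\mathrm{W}_{(m,a)}^{*}\mathrm{W}_{(n,b)}$, which is then computed by \eqref{def:nica}. The vanishing relations — the clauses $\upsilon_{p}^{*}s^{k_{1}}t^{k_{2}}\upsilon_{p}=0$ of $(\mathrm{T5})$, and $\upsilon_{p}^{*}s^{k}\upsilon_{p}=0$, $\upsilon_{p}^{*}t^{k}\upsilon_{p}=0$, $\upsilon_{p}^{*}t^{k}s^{*k}\upsilon_{p}=0$ of $(\mathrm{T6})$ — become respectively $\mathrm{W}_{(0,p)}^{*}\mathrm{W}_{(2k_{1}+3k_{2},p)}$, $\mathrm{W}_{(0,p)}^{*}\mathrm{W}_{(2k,p)}$, $\mathrm{W}_{(0,p)}^{*}\mathrm{W}_{(3k,p)}$, $\mathrm{W}_{(2k,p)}^{*}\mathrm{W}_{(3k,p)}$; in each case $\gcd(p,p)=p$ fails to divide the relevant difference ($2k_{1}+3k_{2}$ with $1\le 2k_{1}+3k_{2}<p$; $2k$ with $p\neq2$; $3k$ with $p\neq3$; $k$ with $1\le k<p$), so by Proposition~\ref{pr:cal}(1) the corresponding intersection is empty, hence the join is $\infty$ and \eqref{def:nica} forces the product to be $0$.

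The remaining, non-vanishing relations — $s^{*}t=ts^{*}$ of $(\mathrm{R2})$ (with $t^{*}s=st^{*}$ its adjoint), $\upsilon_{p}s^{*}=s^{*p}\upsilon_{p}$ and $\upsilon_{p}t^{*}=t^{*p}\upsilon_{p}$ of $(\mathrm{T1})$, $(\mathrm{T3})$, $(\mathrm{T4})$, and $\upsilon_{2}^{*}s\upsilon_{2}=ts^{*}$, $\upsilon_{3}^{*}t\upsilon_{3}=ts^{*}$, $\upsilon_{3}^{*}t^{2}\upsilon_{3}=s$ of $(\mathrm{T6})$ — are handled the same way, but now with a finite join: one identifies $\sigma=(m,a)\Cup(n,b)$ via Proposition~\ref{pr:cal}(2) (or a direct check of the least element of $(m+a\mathrm{P})\cap(n+b\mathrm{P})$), computes $(m,a)^{-1}\sigma$ and $(n,b)^{-1}\sigma$, and reads the right-hand side off from \eqref{def:nica}. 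For example, $(2,1)\Cup(3,1)=(5,1)$ with $(2,1)^{-1}(5,1)=(3,1)$, $(3,1)^{-1}(5,1)=(2,1)$ gives $s^{*}t=ts^{*}$; $(2p,1)\Cup(0,p)=(2p,p)$ gives $\upsilon_{p}s^{*}=s^{*p}\upsilon_{p}$ once $s^{*p}\upsilon_{p}$ is written as $\mathrm{W}_{(2p,1)}^{*}\mathrm{W}_{(0,p)}$; $(0,p)\Cup(0,q)=(0,pq)$ gives $(\mathrm{T3})$; $(2,1)\Cup(0,p)=(2p,p)$ together with $s^{p-1}\upsilon_{p}s^{*}=\mathrm{W}_{(2p-2,p)}\mathrm{W}_{(2,1)}^{*}$ gives $(\mathrm{T4})$; and $(0,2)\Cup(2,2)=(6,2)$, $(0,3)\Cup(3,3)=(9,3)$, $(0,3)\Cup(6,3)=(6,3)$ give the three exceptional $(\mathrm{T6})$ identities. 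The main obstacle — and the only genuinely error-prone part — is the systematic, correct computation of these joins: because $1\notin\mathrm{P}$, the naive candidate for the least element of $(m+a\mathrm{P})\cap(n+b\mathrm{P})$ is often not attained (this is exactly what the conditions $\alpha\neq1$, $\beta\neq1$ in Proposition~\ref{pr:cal}(2) encode, and why, for example, $(0,2)\Cup(2,2)=(6,2)$ rather than $(2,2)$). I would therefore present the verification as a table ranging over $(\mathrm{R1})$, $(\mathrm{R2})$, $(\mathrm{T1})$--$(\mathrm{T6})$, recording in each row the word in the $\mathrm{W}$'s, the relevant join, and the resulting simplification; beyond that bookkeeping, every step is one of the three routine structural moves above.
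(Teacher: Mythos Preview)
Your proposal is correct and follows essentially the same approach as the paper: invoke the universal property of $\mathcal{A}$ and verify that $\mathrm{W}_{(2,1)}$, $\mathrm{W}_{(3,1)}$, $\mathrm{W}_{(0,p)}$ satisfy $(\mathrm{R1})$, $(\mathrm{R2})$, $(\mathrm{T1})$--$(\mathrm{T6})$ by combining the semigroup identities with the covariance relation~\eqref{def:nica}. Your treatment of the last clause of $(\mathrm{T6})$ is in fact a bit cleaner than the paper's: by first using the already-verified identity $\mathrm{S}^{*}\mathrm{T}=\mathrm{T}\mathrm{S}^{*}$ to rewrite $\mathrm{T}^{k}\mathrm{S}^{*k}=\mathrm{S}^{*k}\mathrm{T}^{k}$, you reduce $\mathrm{V}_{p}^{*}\mathrm{T}^{k}\mathrm{S}^{*k}\mathrm{V}_{p}$ to the single covariance evaluation $\mathrm{W}_{(2k,p)}^{*}\mathrm{W}_{(3k,p)}$ (which vanishes since $p\nmid k$), whereas the paper applies covariance twice with intermediate joins $\sigma_{1}$ and $\sigma_{2}$.
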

\begin{proof}
We  put  $\mathrm{S}=\mathrm{W}_{(2,1)},\mathrm{T}=\mathrm{W}_{(3,1)},$ and $\mathrm{V}_{p}=\mathrm{W}_{(0,p)}$. 
We  will  show  that $\mathrm{S, T},$ and $\mathrm{V}_p$ for  a  prime  $p$  satisfy    $(\mathrm{R1, R2})$  and $(\mathrm{T1 - T6}).$  
  It  is  easily  shown that $ \mathrm{T^2 = S^3},$ $\mathrm{TS  =  ST}$, $\mathrm{V}_{p}\mathrm{S}=\mathrm{S}^{p}\mathrm{V}_{p}$, $\mathrm{V}_{p}\mathrm{T}=\mathrm{T}^{p}\mathrm{V}_{p},$  and $\mathrm{V}_{p}\mathrm{V}_{q}=\mathrm{V}_{q}\mathrm{V}_{p}$ hold    by  the  definition  of  semi-direct  product.  
  
    Equations $\mathrm{S^*T = TS^*},$ $\mathrm{T^*S = ST^*},$ $\mathrm{V}_{p}\mathrm{S}^* = \mathrm{S}^{*p}\mathrm{V}_{p},$ $\mathrm{V}_{p}\mathrm{T}^* = \mathrm{T}^{*p}\mathrm{V}_{p},$ $\mathrm{V}_{p}^*\mathrm{V}_{q} = \mathrm{V}_{q}\mathrm{V}_{p}^*,$ and $ \mathrm{S}^{*}\mathrm{V}_{p}=\mathrm{S}^{p-1}\mathrm{V}_{p}\mathrm{S}^*$   hold  by  the  virtue of  the Nica covariance relation (\ref{def:nica}) for $(m,a)=(2,1)$ and $(n,b)=(3,1)$ ; for $(m,a)=(3,1)$ and $(n,b)=(2,1)$ ; for $(m,a)=(2p,1)$ and $(n,b)=(0,p)$ ; for $(m,a)=(3p,1)$ and $(n,b)=(0,p)$ ; for $(m,a)=(0,p)$ and $(n,b)=(0,q)$ ; and  for  $(m,a)=(2,1)$ and $(n,b)=(0,p),$ respectively.
  
To prove (T5) we  will  show that $\mathrm{V}^{*}_{p}\mathrm{S}^{k_{1}}\mathrm{T}^{k_{2}}\mathrm{V}_{p}=0$ for $1\leq 2k_{1}+3k_{2}<p.$ 
 Since $(2,1)^{k_{1}}(3,1)^{k_{2}}(0,p)=(2k_{1},1)(3k_{2},1)(0,p)=(2k_{1}+3k_{2},p), $ we can use the Nica covariance relation (\ref{def:nica}) for  $(m,a)=(0,p)$ and $(n,b)=(2k_{1}+3k_{2},p).$ Then  we  have 
\begin{eqnarray}
\begin{cases} \label{3.2}
\ell \equiv0(mod~p)~but~ \ell \neq p,   \\
\ell \equiv 2k_{1}+3k_{2}(mod~p)~but~ \ell \neq 2k_{1}+3k_{2}+p.  
\end{cases}
\end{eqnarray}
where $(o,p) \Cup (2k_{1}+3k_{2}, p)=(\ell , p).$
By (\ref{3.2})   we  have $2k_{1}+3k_{2}\equiv 0~(mod~p)$, which is contradictory to $1 \leq 2k_{1}+3k_{2} <p.$ This implies that $(p\mathrm{P})\cap((2k_{1}+3k_{2})+p\mathrm{P})=\emptyset $.  
Therefore  it  leads  that $~\mathrm{V}_{p}^{*}\mathrm{S}^{k_{1}}\mathrm{T}^{k_{2}}\mathrm{V}_{p}=0~$ for $1 \leq 2k_{1}+3k_{2} <p.$

  Equations $\mathrm{V}^{*}_{p}\mathrm{S}^{k}\mathrm{V}_{p}=0$ for $1\leq k<p$ when $p \not = 2,$  $\mathrm{V}^{*}_{2}\mathrm{S}\mathrm{V}_{2}=\mathrm{T}\mathrm{S}^{*},$ $\mathrm{V}^{*}_{p}\mathrm{T}^{k}\mathrm{V}_{p}=0$ for $1\leq k<p$ when $p \not = 3,$  $\mathrm{V}^{*}_{3}\mathrm{T}\mathrm{V}_{3}=\mathrm{T}\mathrm{S}^{*},$ and  $\mathrm{V}^{*}_{3}\mathrm{T}^{2}\mathrm{V}_{3}=\mathrm{S}$
    can  be   also  proved  by  the Nica covariance relation (\ref{def:nica}) for  $(m,a)=(0,p)$ and $(n,b)=(2k,p)$; for  $(m,a)=(0,2)$ and $(n,b)=(2,2)$; for  $(m,a)=(0,p)$ and $(n,b)=(3k,p)$; for  $(m,a)=(0,3)$ and $(n,b)=(3,3)$; and for  $(m,a)=(0,3)$ and $(n,b)=(6,3)$, respectively.

 Finally,  we show that $\mathrm{V}^{*}_{p}\mathrm{T}^{k}\mathrm{S}^{*k}\mathrm{V}_{p}=0$ for $1\leq k<p.$
By the Nica covariance condition  $\mathrm{W}^{*}_{(0,p)}\mathrm{W}_{(3k,1)}=\mathrm{W}_{(0,p)^{-1}\sigma_{1}}\mathrm{W}^{*}_{(3k,1)^{-1}\sigma_{1}}$ where $\sigma_{1}=(0,p)\Cup(3k,1)=(\ell,p).$   So  we  have 
 \begin{eqnarray*}
  \mathrm{W}^{*}_{(0,p)}\mathrm{W}^{k}_{(3,1)}\mathrm{W}^{*k}_{(2,1)}\mathrm{W}_{(0,p)} &=& \mathrm{W}_{(0,p)^{-1}\sigma_{1}}\mathrm{W}^{*}_{(3k,1)^{-1}\sigma_{1}}\mathrm{W}^{*k}_{(2,1)}\mathrm{W}_{(0,p)}\\
&=&\mathrm{W}_{(0,p)^{-1}\sigma_{1}}\mathrm{W}^{*}_{(k,1)^{-1}\sigma_{1}}\mathrm{W}_{(0,p)}\\
 &=&\mathrm{W}_{(0,p)^{-1}\sigma_{1}}\mathrm{W}_{\sigma^{-1}_{1}(k,1)\sigma_{2}}\mathrm{W}^{*}_{(0,p)^{-1}\sigma_{2}}
 \end{eqnarray*}
where 
$\sigma_{2}=(k,1)^{-1}\sigma_{1}\Cup(0,p) =(-k+\ell,p)\Cup(0,p)  = (\ell^{'},p).$
Since $\ell^{'}$ is the smallest element of $((-k+\ell)+p\mathrm{P})\cap(p\mathrm{P}),$  we  get
\begin{eqnarray}
\begin{cases} \label{3.3}
\ell^{'} \equiv(-k+\ell)~(mod~p)~but~ \ell^{'} \neq -k+\ell+p,    \\
\ell^{'} \equiv 0~(mod~p)~but~ \ell^{'} \neq p.   
\end{cases}
\end{eqnarray}
By (\ref{3.3})  $k \equiv \ell \equiv 0~(mod~p)$.    It  is   contradictory   to  that   $1\leq k<p.$
Therefore   we  have  $((-k+\ell)+p\mathrm{P})\cap(p\mathrm{P})= \phi$  and  $~\mathrm{W}^{*}_{(0,p)}\mathrm{W}^{k}_{(3,1)}\mathrm{W}^{*k}_{(2,1)}\mathrm{W}_{(0,p)}=0$ for $1\leq k<p.$\\
\end{proof}

\begin{remark} 
~We write $\mathrm{s}^{((k))}$ to mean $\mathrm{s}^{k}$ where $k \geq 0$ and $s^{*(-k)}$ when $k<0, $~~then $\mathrm{s}^{((a+b))}=\mathrm{s}^{((a))}\mathrm{s}^{((b))}.$
\end{remark}

\begin{lemma}   \label{le:k1} 
Let  $\mathcal{A}$   be the  $C^*$-algebra  in  Theorem \ref {th:T6}.
Suppose that   $s,~t$, and $\{\upsilon_{p}:p~is~prime\}$ are isometries satisfying the relations $(\mathrm{R1, R2})$  and $(\mathrm{T1-T6}).$ Then the isometries $s,~t,$ and $~\upsilon_{p}$ for a prime number $p$ satisfy
$$~~\upsilon_{p}^{*}s^{((k_{1}))}t^{((k_{2}))}\upsilon_{p}=0   \hskip  1pc 
  \text{for}~ 1\leq 2k_{1}+3k_{2}<p, ~~k_{1},k_{2} \in \mathbb{Z}. $$
\end{lemma}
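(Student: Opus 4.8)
The plan rests on one observation: the relations (R1) and (R2), together with the hypothesis that $s$ and $t$ are isometries, in fact force $s$ and $t$ to be \emph{unitary}. Once that is established, $s^{((k_1))}t^{((k_2))}$ is an honest power of a commuting pair of unitaries and collapses to a single power, whence the statement becomes an immediate consequence of (T5) together with one clause of (T6).

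First I would prove the unitarity. From $s^*s=1$ we get $s^*=s^{*3}s^2$, and $s^{*3}=(s^3)^*=(t^2)^*=(t^*)^2$ by (R1); so $s^*=(t^*)^2s^2$, and then, commuting $s$ past $t^*$ twice by means of $s t^*=t^* s$ from (R2),
\begin{equation*}
ss^*=s(t^*)^2s^2=(t^*)^2s^3=(t^*)^2t^2=1,
\end{equation*}
the last two steps by (R1) and $t^*t=1$. The same computation with the roles of $s$ and $t$ interchanged gives $t^*=s^{*3}t$ and $tt^*=ts^{*3}t=s^{*3}t^2=s^{*3}s^3=1$. Hence $s$ and $t$ are commuting unitaries satisfying $s^3=t^2$; in particular $s^{((k))}=s^k$ and $t^{((k))}=t^k$ for every $k\in\mathbb Z$, and the identity $s^{((a+b))}=s^{((a))}s^{((b))}$ of the preceding Remark is just the homomorphism property of $k\mapsto s^k$.

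Next I would set $z:=s^*t$, noting that $z$ is unitary with $z^2=s^{*2}t^2=s^{*2}s^3=s$ and $z^3=z\,z^2=s^*ts=s^*st=t$. Therefore, for any $k_1,k_2\in\mathbb Z$,
\begin{equation*}
s^{((k_1))}t^{((k_2))}=s^{k_1}t^{k_2}=z^{2k_1+3k_2}=z^{\,n},\qquad n:=2k_1+3k_2,
\end{equation*}
and by hypothesis $1\le n<p$. If $n\ge2$ I pick $a,b\ge0$ with $2a+3b=n$ (possible since every integer $\ge2$ lies in $2\mathbb N+3\mathbb N$); then $z^n=z^{2a}z^{3b}=s^at^b$, and $\upsilon_p^*s^at^b\upsilon_p=0$ by (T5), as $1\le 2a+3b=n<p$. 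If $n=1$ then $z^n=z=ts^*$, and $\upsilon_p^*ts^*\upsilon_p=0$ by the $k=1$ instance of the relation $\upsilon_p^*t^ks^{*k}\upsilon_p=0$ in (T6), which is legitimate because $1\le1<p$ for every prime $p$. Either way $\upsilon_p^*s^{((k_1))}t^{((k_2))}\upsilon_p=0$, as required.

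The whole weight of the argument sits in the unitarity observation; everything after it is routine, the one mild point being that $n=1$ is the only value in $[1,p)$ not of the form $2a+3b$ with $a,b\ge0$, so it must be disposed of through (T6) rather than (T5). (A reader preferring not to pass through unitarity could instead put $s^{((k_1))}t^{((k_2))}$ into a normal form $\tau\,s^As^{*B}$ with $\tau\in\{1,t,t^*\}$ using (R1), (R2), and $s^*s=t^*t=1$, then commute the tail projection $s^As^{*A}$ across $\upsilon_p$ with the help of (T1) and (T4) before invoking (T5)/(T6); this also works, but is longer.)
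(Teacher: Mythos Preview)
Your proof is correct and follows essentially the same line as the paper's: reduce $s^{((k_1))}t^{((k_2))}$ to something depending only on $n=2k_1+3k_2$, then handle $n=1$ through (T6) and $n\ge 2$ through (T5) after rewriting $n=2a+3b$ with $a,b\ge 0$. The paper achieves the first reduction by substituting $k_1\mapsto k_1+3u$, $k_2\mapsto k_2-2u$ and invoking $s^{((3u))}t^{((-2u))}=1$; you achieve it by introducing the unitary $z=s^*t$ with $z^2=s$, $z^3=t$.

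Your explicit derivation that (R1)--(R2) force $s$ and $t$ to be \emph{unitary} is a genuine clarification: the paper uses this fact tacitly (both in the identity $s^{((a+b))}=s^{((a))}s^{((b))}$ of the preceding Remark and in the cancellation $s^{((3u))}t^{((-2u))}=1$), but never states or proves it. Making it explicit, as you do, is the cleaner route.
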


\begin{proof}
Let $k=2k_{1}+3k_{2}$. If $(k^{'}_{1},k^{'}_{2})$ is the particular solution of the indeterminate equation $k=2k_{1}+3k_{2}$,  then
$k_{1}= k^{'}_{1}+3u$  and  $k_{2}= k^{'}_{2}-2u$  for $ u \in \mathbb{Z}$
  are  the general solutions. By (R1) and (R2)
\begin{eqnarray*}
\upsilon_{p}^{*}s^{((k_{1}))}t^{((k_{2}))}\upsilon_{p}
&=&\upsilon_{p}^{*}s^{((k^{'}_{1}))}s^{((3u))}t^{*((2u))}t^{((k^{'}_{2}))}\upsilon_{p}\\
&=&\upsilon_{p}^{*}s^{((k^{'}_{1}))}t^{((k^{'}_{2}))}\upsilon_{p}.
\end{eqnarray*}
Therefore  we only consider the particular solution of the indeterminate equation. 
If $k=1$, we can take $k_{1}=-1$ and $ k_{2}=1$.  Then $\upsilon_{p}^{*}s^{*}t\upsilon_{p}=0$ by (T6).
If $k\not=1$, we can take $k_{1}\geq 0 $  and $ k_{2}\geq0$ such that $k=2k_{1}+3k_{2}$.
 Thus  we  have  $\upsilon_{p}^{*}s^{((k_{1}))}t^{((k_{2}))}\upsilon_{p}=\upsilon_{p}^{*}s^{k_{1}}t^{k_{2}}\upsilon_{p}=0~for~ 1\leq 2k_{1}+3k_{2}<p$ by (T5).\end{proof}

\begin{lemma}  Let  $\mathcal{A}$   be the  $C^*$-algebra  in  Theorem \ref {th:T6}.
Suppose that   $s,~t,$ and $\{\upsilon_{p}:p~is~prime\}$ are isometries satisfying the relations  $(\mathrm{R1, R2})$  and $(\mathrm{T1 - T6}).$   Then the isometries $\upsilon_{a} := \Pi_{p}\upsilon_{p}^{e_{p}(a)}$ for $a\in\mathbb{N}^{\times}$ and a  prime  number  $p$  satisfy\\
$(\mathrm{T1}^{'})$~~~$\upsilon_{a}s=s^{a}\upsilon_{a},$~~$\upsilon_{a}t=t^{a}\upsilon_{a},$ ~~$\upsilon_{a}s^{*}=s^{*a}\upsilon_{a},$~and~~$\upsilon_{a}t^{*}=t^{*a}\upsilon_{a}$, \\
$(\mathrm{T2}^{'})$~~~$\upsilon_{a}\upsilon_{b}=\upsilon_{b}\upsilon_{a}$,\\
$(\mathrm{T3}^{'})$~~~$\upsilon_{a}^{*}\upsilon_{b}=\upsilon_{b}\upsilon_{a}^{*}$ whenever $gcd(a,b)=1$,\\
$(\mathrm{T4}^{'})$~~~$s^{*}\upsilon_{a}=s^{a-1}\upsilon_{a}s^{*}$,\\
$(\mathrm{T5}^{'})$~~~$\upsilon_{a}^{*}s^{k_{1}}t^{k_{2}}\upsilon_{a}=0 $ for $1 \leq 2k_{1}+3k_{2} <a$,\\
$(\mathrm{T6}^{'})$~~~$\upsilon_{a}^{*}s^{k}\upsilon_{a}=0 $ for $1 \leq k <a$~when~$a\neq2,$ $\upsilon_{a}^{*}t^{k}\upsilon_{a}=0 $ for $1 \leq k <a$ ~when~$a\neq3,$  and

$~~~~~$$~~~~~$$~~~~~$$~~~~~$$\upsilon_{a}^{*}t^{k}s^{*k}\upsilon_{a}=0 $ for $1 \leq k <a$ for $a\in\mathbb{N^{\times}}.$
\end{lemma}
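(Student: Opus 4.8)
The overall strategy is to deduce each ``multiplicative'' relation $(\mathrm{T}i')$ from the corresponding prime relation $(\mathrm{T}i)$ of Theorem \ref{th:T6} by induction on $\Omega(a)$, the number of prime factors of $a$ counted with multiplicity. The base case $\Omega(a)=1$, i.e. $a$ prime, is exactly $(\mathrm{T}i)$, and in the inductive step I write $a=bp$ with $p$ a prime factor and $\Omega(b)=\Omega(a)-1$, so that $\upsilon_a=\upsilon_b\upsilon_p=\upsilon_p\upsilon_b$ by $(\mathrm{T2})$ and the definition of $\upsilon_a$. Two elementary consequences of the prime relations carry all the bookkeeping and should be recorded first: (a) from $(\mathrm{T1})$, induction on the exponent gives $\upsilon_p^{\,j}s^{m}=s^{\,p^{j}m}\upsilon_p^{\,j}$ for all $j,m\ge 0$, and likewise with $t$, $s^{*}$, $t^{*}$ in place of $s$; (b) from $(\mathrm{T2})$, $(\mathrm{T3})$ and their adjoints, the families $\{\upsilon_p\}$ and $\{\upsilon_p^{*}\}$ each commute internally, and $\upsilon_p^{*}$ commutes with $\upsilon_q$ and $\upsilon_q^{*}$ whenever $p\ne q$.

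With these in hand, $(\mathrm{T1}')$ follows by pushing $s$ (resp. $t,s^{*},t^{*}$) to the left successively through the prime-power blocks $\upsilon_{p_i}^{e_i}$ of $\upsilon_a=\prod_i\upsilon_{p_i}^{e_i}$ (where $e_i=e_{p_i}(a)$): each block scales the exponent by $p_i^{e_i}$, and the product of these scalings is $a$. For $(\mathrm{T2}')$ one simply notes $\upsilon_a\upsilon_b=\upsilon_{ab}=\upsilon_{ba}=\upsilon_b\upsilon_a$, since prime exponents add and all factors commute. For $(\mathrm{T3}')$, the hypothesis $\gcd(a,b)=1$ means $a$ and $b$ have disjoint prime supports, so in $\upsilon_a^{*}\upsilon_b=\big(\prod_{p\mid a}\upsilon_p^{*e_p(a)}\big)\big(\prod_{q\mid b}\upsilon_q^{e_q(b)}\big)$ every factor of the right-hand product slides to the left past every factor of the left-hand one by $(\mathrm{T3})$, producing $\upsilon_b\upsilon_a^{*}$. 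For $(\mathrm{T4}')$ I would first establish the product rule: if $s^{*}\upsilon_c=s^{c-1}\upsilon_c s^{*}$ and $s^{*}\upsilon_d=s^{d-1}\upsilon_d s^{*}$, then, moving $s^{d-1}$ through $\upsilon_c$ by $(\mathrm{T1}')$, one gets $s^{*}\upsilon_{cd}=s^{*}\upsilon_c\upsilon_d=s^{c-1}\upsilon_c s^{*}\upsilon_d=s^{c-1}\upsilon_c s^{d-1}\upsilon_d s^{*}=s^{c-1}s^{c(d-1)}\upsilon_c\upsilon_d s^{*}=s^{cd-1}\upsilon_{cd}s^{*}$; iterating this from the prime case $(\mathrm{T4})$ yields $(\mathrm{T4}')$ for every $a$.

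The substantive part is $(\mathrm{T5}')$ and $(\mathrm{T6}')$. Using $t^{2}=s^{3}$ one may, exactly as in Lemma \ref{le:k1}, reduce $\upsilon_a^{*}s^{k_1}t^{k_2}\upsilon_a$ with $n:=2k_1+3k_2$, $1\le n<a$, to $\upsilon_a^{*}s^{j}t^{\varepsilon}\upsilon_a$ with $\varepsilon\in\{0,1\}$ and $2j+3\varepsilon=n$. Writing $a=bp$ and $\upsilon_a=\upsilon_p\upsilon_b$ gives $\upsilon_a^{*}s^{j}t^{\varepsilon}\upsilon_a=\upsilon_b^{*}\big(\upsilon_p^{*}s^{j}t^{\varepsilon}\upsilon_p\big)\upsilon_b$; the plan is to compute the inner expression using division with remainder $j=qp+r$ ($0\le r<p$) together with $s^{qp}\upsilon_p=\upsilon_p s^{q}$ (a case of $(\mathrm{T1}')$), which rewrites $\upsilon_p^{*}s^{j}t^{\varepsilon}\upsilon_p$ in terms of the prime expression $\upsilon_p^{*}s^{r}t^{\varepsilon}\upsilon_p$ --- governed by $(\mathrm{T5})$, $(\mathrm{T6})$ --- times a residual power of $s$, and then to feed this into $\upsilon_b^{*}(\cdot)\upsilon_b$ and invoke the inductive hypothesis for $b$ together with $(\mathrm{T1}')$; the constraint $n<bp$ forces the exponent that finally reaches $\upsilon_b^{*}(\cdot)\upsilon_b$ to lie strictly inside the relevant interval, so it is annihilated. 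The relation $\upsilon_a^{*}t^{k}s^{*k}\upsilon_a=0$ for $1\le k<a$ follows from its prime case in $(\mathrm{T6})$ by the same inductive scheme (compare the last computation in the proof of Theorem \ref{th:T6}). I expect the main obstacle to be precisely this inductive bookkeeping: one must choose the prime $p$ to peel off with care and keep every intermediate residue strictly inside its interval at each stage, so that the exceptional prime values $\upsilon_2^{*}s\upsilon_2=ts^{*}$, $\upsilon_3^{*}t\upsilon_3=ts^{*}$, $\upsilon_3^{*}t^{2}\upsilon_3=s$ only ever appear trapped inside an outer $\upsilon_b^{*}(\cdot)\upsilon_b$ that collapses them, which is what removes the exceptional clauses from $(\mathrm{T6}')$ once $a$ is composite.
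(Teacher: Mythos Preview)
Your overall plan---induction on the number of prime factors, with $(\mathrm{T1}')$--$(\mathrm{T4}')$ handled exactly as you describe---matches the paper's argument. For $(\mathrm{T5}')$, however, there is a small but genuine gap in your reduction. You propose to write $j=qp+r$ and reduce $\upsilon_p^{*}s^{j}t^{\varepsilon}\upsilon_p$ to $s^{q}\,\upsilon_p^{*}s^{r}t^{\varepsilon}\upsilon_p$, claiming the latter is ``governed by $(\mathrm{T5})$, $(\mathrm{T6})$.'' But when $\varepsilon=1$ and $r\ge (p-3)/2$ one has $2r+3\ge p$, and neither $(\mathrm{T5})$ (which requires $2r+3<p$) nor any clause of $(\mathrm{T6})$ covers $\upsilon_p^{*}s^{r}t\,\upsilon_p$ directly; so this expression is not immediately identified. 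The fix is to divide $n=2k_1+3k_2$ (not $j$) by $p$: writing $n=pm+\alpha$ with $0\le\alpha<p$ and choosing $m=2m_1+3m_2$, $\alpha=2\alpha_1+3\alpha_2$, one pulls $s^{((pm_1))}$ and $t^{((pm_2))}$ through $\upsilon_p$ via $(\mathrm{T1})$ to obtain $s^{((m_1))}\big(\upsilon_p^{*}s^{((\alpha_1))}t^{((\alpha_2))}\upsilon_p\big)t^{((m_2))}$, and now Lemma~\ref{le:k1} cleanly gives $0$ unless $\alpha=0$. This is exactly the computation in the paper's proof.

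There is also a minor structural difference worth noting. The paper organises the induction for $(\mathrm{T5}')$ by first proving, for each fixed prime $p$ and by induction on the exponent $n$, the contrapositive statement ``$\upsilon_p^{*n}s^{k_1}t^{k_2}\upsilon_p^{n}\ne 0\Rightarrow p^{n}\mid 2k_1+3k_2$,'' and only afterwards combines across primes via the trivial observation that $\upsilon_a^{*}s^{k_1}t^{k_2}\upsilon_a\ne 0$ forces each prime-power block $\upsilon_p^{*e_p(a)}s^{k_1}t^{k_2}\upsilon_p^{e_p(a)}\ne 0$. Your direct induction on $\Omega(a)$ with $a=bp$ also works once the division is done on $n$ rather than $j$, but it requires carrying the $(\mathrm{T6}')$-type statement $\upsilon_b^{*}ts^{*}\upsilon_b=0$ along in the induction (to handle the case $m=1$, where $m$ cannot be written as $2m_1+3m_2$ with $m_1,m_2\ge 0$); the paper's scheme avoids this by keeping the prime-power step self-contained.
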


\begin{proof}
Equations $(\mathrm{T1}^{\prime}),~(\mathrm{T2}^{\prime}),$ and $(\mathrm{T3}^{\prime})$ follow immediately from their counterparts for a prime. We   will  prove  $(\mathrm{T4}^{\prime})$ by induction on the number of prime factors of $ a$.

Suppose that   $(\mathrm{T4}^{\prime})$ is true for every $a\in\mathbb{N}^{\times}$ with $n$ prime factors  and $b=aq\in\mathbb{N}^{\times} $ has $n+1$ prime factors. Then  we  have
\begin{eqnarray*}
 s^{*}\upsilon_{b} = s^{*}\upsilon_{aq}
  =s^{a-1}\upsilon_{a} s^{q-1}\upsilon_{q}s^{*} 
  =s^{a-1}s^{a(q-1)}\upsilon_{a} \upsilon_{q}s^{*} 
  = s^{aq-1}\upsilon_{aq}s^{*} 
  =s^{b-1}\upsilon_{b}s^{*}. 
\end{eqnarray*}
  Therefore we have proved  $(\mathrm{T4}^{\prime}).$ 

For  $(\mathrm{T5}^{\prime}),$   we first prove by induction on $n$
 that $\upsilon_{p}^{*n}s^{k_{1}}t^{k_{2}}\upsilon_{p}^{n} \neq 0$ implies $p^{n} | 2k_{1}+3k_{2}.$ 
Let $n=1$. If $\upsilon_{p}^{*}s^{k_{1}}t^{k_{2}}\upsilon_{p} \neq 0$, then $ p \leq 2k_{1}+3k_{2}$ by (T5).
 Thus we can have $2k_{1}+3k_{2}=pm+\alpha$~~(some ~$m\in \mathbb{N}$~and ~~$0 \leq \alpha < p).$   If  we  put   $m=2m_{1}+3m_{2}$    and $ \alpha=2\alpha_{1}+3\alpha_{2}$, then we can have  $k_{1}=pm_{1}+\alpha_{1}$~and~~$k_{2}=pm_{2}+\alpha_{2} $ which is the particular solution of the equation $2k_{1}+3k_{2}=pm+\alpha.$
Therefore   we  get 
\begin{eqnarray*}
0 &\not=& \upsilon_{p}^{*}s^{k_{1}}t^{k_{2}}\upsilon_{p}\\
&=&(\upsilon_{p}^{*}s^{((pm_{1}))})s^{((\alpha_{1}))}t^{((\alpha_{2}))}(t^{((pm_{2}))}\upsilon_{p})\\
&=&s^{((m_{1}))}(\upsilon_{p}^{*}s^{((\alpha_{1}))}t^{((\alpha_{2}))}\upsilon_{p})t^{((m_{2}))}.
\end{eqnarray*}
By Lemma \ref {le:k1} if  $\upsilon_{p}^{*}s^{((\alpha_{1}))}t^{((\alpha_{2}))}\upsilon_{p}\not=0$ ~~for  ~$0 \leq 2\alpha_{1}+3\alpha_{2} < p$,   then  $\alpha =0.$   So  we  have  a  conclusion  that  $p | 2k_{1}+3k_{2}.$ 

Suppose that ($\mathrm{T5}^{'}$)  holds   for  $n$. We are going to show that the property holds in the case of  $n+1.$
If $\upsilon_{p}^{*(n+1)}s^{((k_{1}))}t^{((k_{2}))}\upsilon_{p}^{(n+1)}=\upsilon_{p}^{*}(\upsilon_{p}^{*n}s^{((k_{1}))}t^{((k_{2}))}\upsilon_{p}^{n})\upsilon_{p} \not= 0,$ then $p^{n} | 2k_{1}+3k_{2}.$ Thus we can take $p^{n}u=2k_{1}+3k_{2}$ (some $u \in \mathbb{N}$). If $u=1,$ then we can take $k_{1}=3k_{0}-p^{n}$ and $k_{2}=p^{n}-2k_{0}$ some $k_{0} \in \mathbb{Z}$ because $2k_{1}+3k_{2}=-2p^{n}+3p^{n}.$    Therefore we have
\begin{eqnarray*}
0 &\not=& \upsilon_{p}^{*}(\upsilon_{p}^{*n}s^{k_{1}}t^{k_{2}}\upsilon^{n}_{p})\upsilon_{p}\\
&=& \upsilon_{p}^{*}(\upsilon_{p}^{*n}s^{*p^{n}})(s^{((3k_{0}))}t^{*((2k_{0}))})(t^{p^{n}}\upsilon^{n}_{p})\upsilon_{p}\\
&=& \upsilon_{p}^{*}(s^{*}\upsilon_{p}^{*n}\upsilon^{n}_{p}t)\upsilon_{p}\\
&=& \upsilon_{p}^{*}s^{*}t\upsilon_{p}.
\end{eqnarray*}
But it is contradiction to ($\mathrm{T6}$). If $u >1,$ we can take $u=2u_{1}+3u_{2}$ some $u_{1}, u_{2} \in \mathbb{N}.$ Thus we can have $k_{1}=u_{1}p^{n}+3k_{0}$ and $k_{2}=u_{2}p^{n}-2k_{0}$ some $k_{0} \in \mathbb{Z}$ because $2k_{1}+3k_{2}=2u_{1}p^{n}+3u_{2}p^{n}.$  Hence  it  leads  that 
\begin{eqnarray*}
0 &\not=& \upsilon_{p}^{*}(\upsilon_{p}^{*n}s^{k_{1}}t^{k_{2}}\upsilon^{n}_{p})\upsilon_{p}\\
&=& \upsilon_{p}^{*}(\upsilon_{p}^{*n}s^{u_{1}p^{n}})(s^{((3k_{0}))}t^{*((2k_{0}))})(t^{u_{2}p^{n}}\upsilon^{n}_{p})\upsilon_{p}\\
&=& \upsilon_{p}^{*}(s^{u_{1}}\upsilon_{p}^{*n}\upsilon^{n}_{p}t^{u_{2}})\upsilon_{p}\\
&=& \upsilon_{p}^{*}s^{u_{1}}t^{u_{2}}\upsilon_{p}.
\end{eqnarray*}
  So  we  have  $p | u$ and $p^{n}u=2k_{1}+3k_{2}.$ These  imply that   $p^{n+1} | 2k_{1}+3k_{2}.$ 
It  follows  that 
\begin{eqnarray*}
\upsilon _{a}^{*}s^{k_{1}}t^{k_{2}}\upsilon_{a}\neq0 &\Rightarrow&\upsilon_{p}^{*e_{p}(a)}s^{k_{1}}t^{k_{2}}\upsilon_{p}^{*e_{p}(a)}\neq 0~ \text{for~ all}~ p|a\\
&\Rightarrow&p^{e_{p}(a)}|2k_{1}+3k_{2}~\text{for~ all}~p|a\\
&\Rightarrow&a|2k_{1}+3k_{2} \end{eqnarray*}
which is a reformulation of $(\mathrm{T5}^{\prime}).$  The remaining properties can be proved  similarily.
\end{proof}

Define $\mathrm{X}:\mathrm{P} \rtimes \mathbb{N} ^{\times}\to \A$ by $\mathrm{X}_{(m,a)}:=s^{x}t^{y}\upsilon_{a}$ where $m=2x+3y~and ~x, y \in \mathbb{N}.$
If $m=2x+3y=2x^{'}+3y^{'}$ some $x,~y,~x^{'},~y^{'}$~in~$\mathbb{N}$  and  let $x^{'}\geq x,$ then $2(x-x^{'})=3(y^{'}-y)$.  Since
 $  x=x^{'}-3u$  and  $y=y^{'}+2u $
some $u$ in $\mathbb{N}, $  we  have
$$s^{x}t^{y}=s^{x^{'}}s^{*3u}t^{2u}t^{y^{'}}=s^{x^{'}}s^{*3u}s^{3u}t^{y^{'}}=s^{x^{'}}t^{y^{'}}.$$
This shows that the formula  $\mathrm{X}$ is well-defined.

\begin{lemma} \label{le:X} 
The formula $\mathrm{X}_{(m,a)}:=s^{x}t^{y}\upsilon_{a}$ is an isometry representation on $\mathrm{P} \rtimes \mathbb{N} ^{\times}$ into  $\mathcal{A}$~~where $~~m=2x+3y,$ $x \geq 0,$ and $y \geq 0.$
\end{lemma}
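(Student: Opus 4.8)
The plan is to verify directly that the map $\mathrm{X}_{(m,a)} = s^{x}t^{y}\upsilon_{a}$ (with $m = 2x+3y$, $x,y\geq 0$) is an isometric homomorphism of $\mathrm{P}\rtimes\mathbb{N}^{\times}$ into $\mathcal{A}$. Well-definedness has already been established in the paragraph preceding the statement, so the two things left to check are: (i) each $\mathrm{X}_{(m,a)}$ is an isometry, and (ii) $\mathrm{X}_{(m,a)(n,b)} = \mathrm{X}_{(m,a)}\mathrm{X}_{(n,b)}$ for all $(m,a),(n,b)\in\mathrm{P}\rtimes\mathbb{N}^{\times}$, together with $\mathrm{X}_{(0,1)} = 1$. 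The last of these is immediate since $s^{0}t^{0}\upsilon_{1} = 1$.

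For (i), I would compute $\mathrm{X}_{(m,a)}^{*}\mathrm{X}_{(m,a)} = \upsilon_{a}^{*}t^{*y}s^{*x}s^{x}t^{y}\upsilon_{a} = \upsilon_{a}^{*}t^{*y}t^{y}\upsilon_{a} = \upsilon_{a}^{*}\upsilon_{a} = 1$, using only that $s$, $t$, and $\upsilon_{a}$ are isometries (the latter because it is a product of isometries $\upsilon_{p}^{e_{p}(a)}$, which commute by $(\mathrm{T2}')$ in the sense needed, or more simply because a finite product of isometries is an isometry). So (i) requires no new input.

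The substantive part is (ii). Writing $m = 2x+3y$ and $n = 2x'+3y'$ with all exponents nonnegative, the group law gives $(m,a)(n,b) = (m+an,\,ab)$, and $m+an = 2x+3y + a(2x'+3y') = 2(x+ax') + 3(y+ay')$, so by definition $\mathrm{X}_{(m,a)(n,b)} = s^{x+ax'}t^{y+ay'}\upsilon_{ab}$. On the other hand $\mathrm{X}_{(m,a)}\mathrm{X}_{(n,b)} = s^{x}t^{y}\upsilon_{a}s^{x'}t^{y'}\upsilon_{b}$. The plan is to push $\upsilon_{a}$ to the right past $s^{x'}t^{y'}$ using $(\mathrm{T1}')$ repeatedly: $\upsilon_{a}s^{x'}t^{y'} = s^{ax'}t^{ay'}\upsilon_{a}$, which yields $s^{x}t^{y}s^{ax'}t^{ay'}\upsilon_{a}\upsilon_{b}$. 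Then I would use $(\mathrm{R2})$ (commutativity of $s$ and $t$) to collect the $s$'s and $t$'s, getting $s^{x+ax'}t^{y+ay'}\upsilon_{a}\upsilon_{b}$, and finally $\upsilon_{a}\upsilon_{b} = \upsilon_{ab}$ from the definition $\upsilon_{c} = \Pi_{p}\upsilon_{p}^{e_{p}(c)}$ together with $(\mathrm{T2}')$, since $e_{p}(ab) = e_{p}(a) + e_{p}(b)$ for every prime $p$. Comparing, $\mathrm{X}_{(m,a)}\mathrm{X}_{(n,b)} = s^{x+ax'}t^{y+ay'}\upsilon_{ab} = \mathrm{X}_{(m,a)(n,b)}$.

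The only point requiring a little care — and the main (mild) obstacle — is the bookkeeping in the step $\upsilon_{a}s^{x'}t^{y'} = s^{ax'}t^{ay'}\upsilon_{a}$ when $a$ is composite: this follows from $(\mathrm{T1}')$ of the previous lemma, which was itself established by induction on the number of prime factors, so I would simply cite $(\mathrm{T1}')$ rather than re-derive it. Likewise $\upsilon_{a}\upsilon_{b} = \upsilon_{ab}$ should be noted explicitly as a consequence of the definition of $\upsilon_{c}$ and $(\mathrm{T2}')$ — the commutativity is what lets one rearrange the prime powers so that the exponents add. With these two facts in hand the verification is a short algebraic manipulation, and there is no genuine difficulty; the content of the lemma is really just that the defining relations of $\mathcal{A}$ were chosen precisely so that $\mathrm{X}$ respects the semidirect-product structure.
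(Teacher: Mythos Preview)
Your proposal is correct and follows essentially the same route as the paper's proof: verify $\mathrm{X}_{(m,a)}^{*}\mathrm{X}_{(m,a)}=1$ directly from the isometry relations, then establish multiplicativity by pushing $\upsilon_{a}$ past $s^{x'}t^{y'}$ via $(\mathrm{T1}')$, commuting $s$'s and $t$'s via $(\mathrm{R2})$, and using $\upsilon_{a}\upsilon_{b}=\upsilon_{ab}$. If anything, you are slightly more explicit than the paper about the justifications (notably the step $\upsilon_{a}\upsilon_{b}=\upsilon_{ab}$ and the unit $\mathrm{X}_{(0,1)}=1$), but the argument is the same.
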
 
\begin{proof}
For  $(m,a)\in \mathrm{P} \rtimes \mathbb{N} ^{\times},$  we  have
\begin{eqnarray*}
\mathrm{X}_{(m,a)}^{*}\mathrm{X}_{(m,a)} =
  \upsilon_{a}^{*}t^{*y}s^{*x}s^{x}t^{y}\upsilon_{a} =1.
\end{eqnarray*}
    If   we   put $m=2x_{1}+3y_{1}$~and~$n=2x_{2}+3y_{2}$ for $(m,a),~(n,b) \in \mathrm{P} \rtimes \mathbb{N} ^{\times}$ then  we  have
\begin{eqnarray*}
\mathrm{X}_{(m,a)}\mathrm{X}_{(n,b)}&=&(s^{x_{1}}t^{y_{1}}\upsilon_{a})(s^{x_{2}}t^{y_{2}}\upsilon_{b})\\
&=&s^{x_{1}}t^{y_{1}}(\upsilon_{a}s^{x_{2}})t^{y_{2}}\upsilon_{b}\\
&=&s^{x_{1}}t^{y_{1}}s^{ax_{2}}(\upsilon_{a}t^{y_{2}})\upsilon_{b}\\
&=&s^{x_{1}+ax_{2}}t^{y_{1}+ay_{2}}\upsilon_{a}\upsilon_{b}\\
&=&\mathrm{X}_{(m+an,ab)}=\mathrm{X}_{(m,a)(n,b)}.\end{eqnarray*}\end{proof}

\begin{lemma}  \label{le:iso} 
Suppose  that  the representation $\mathrm{X}$ satisfies the Nica-covariance relation $($In fact, this is to be proved later$).$
   If  we  consider a homomorphism $\rho_{\mathrm{W}}$   from  $\mathcal{A}$ into  $\mathcal{T}(\mathrm{P} \rtimes \mathbb{N} ^{\times})$ such that  $\rho_{\mathrm{W}}(s)=\mathrm{W}_{(2,1)},~\rho_{\mathrm{W}}(t)=\mathrm{W}_{(3,1)},$ and~$\rho_{\mathrm{W}}(\upsilon_{p})=\mathrm{W}_{(0,p)}$ for every prime $p$ and $\pi_{s,t,\upsilon}: \mathcal{T}(\mathrm{P} \rtimes \mathbb{N} ^{\times}) \to   \A$ such that  $\pi_{s,t,\upsilon}(\mathrm{W}_{(m,a)})=\mathrm{X}_{(m,a)}$,  then  they  are  the  inverse of each other.
\end{lemma}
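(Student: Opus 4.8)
The plan is to observe that both homomorphisms are already at hand and that the claim reduces to checking the two composites on generators. The map $\rho_{\mathrm{W}}$ exists by Theorem \ref{th:T6}, because $\mathrm{W}_{(2,1)}$, $\mathrm{W}_{(3,1)}$, and $\{\mathrm{W}_{(0,p)} : p \text{ prime}\}$ satisfy the defining relations $(\mathrm{R1}),(\mathrm{R2}),(\mathrm{T1})$--$(\mathrm{T6})$ of $\mathcal{A}$. The map $\pi_{s,t,\upsilon}$ exists by the universal property of $\mathcal{T}(\mathrm{P}\rtimes\mathbb{N}^{\times})$ applied to the representation $\mathrm{X}$, which is an isometric representation by Lemma \ref{le:X} and is covariant by the standing hypothesis. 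Both $\rho_{\mathrm{W}}$ and $\pi_{s,t,\upsilon}$ are $*$-homomorphisms, so it is enough to check that $\pi_{s,t,\upsilon}\circ\rho_{\mathrm{W}}$ restricts to the identity on a generating set of $\mathcal{A}$ and that $\rho_{\mathrm{W}}\circ\pi_{s,t,\upsilon}$ restricts to the identity on a generating set of $\mathcal{T}(\mathrm{P}\rtimes\mathbb{N}^{\times})$.

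First I would handle $\pi_{s,t,\upsilon}\circ\rho_{\mathrm{W}}$. Since $\mathcal{A}$ is generated by $s$, $t$, and $\{\upsilon_p\}$, it suffices to evaluate there. Using $2 = 2\cdot 1 + 3\cdot 0$, $3 = 2\cdot 0 + 3\cdot 1$, $0 = 2\cdot 0 + 3\cdot 0$, and $\upsilon_1 = 1$, the definition $\mathrm{X}_{(m,a)} = s^x t^y \upsilon_a$ gives $\pi_{s,t,\upsilon}(\rho_{\mathrm{W}}(s)) = \pi_{s,t,\upsilon}(\mathrm{W}_{(2,1)}) = \mathrm{X}_{(2,1)} = s$, and likewise $\pi_{s,t,\upsilon}(\rho_{\mathrm{W}}(t)) = \mathrm{X}_{(3,1)} = t$ and $\pi_{s,t,\upsilon}(\rho_{\mathrm{W}}(\upsilon_p)) = \mathrm{X}_{(0,p)} = \upsilon_p$. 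Hence $\pi_{s,t,\upsilon}\circ\rho_{\mathrm{W}} = \mathrm{id}_{\mathcal{A}}$.

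Next I would handle $\rho_{\mathrm{W}}\circ\pi_{s,t,\upsilon}$. The algebra $\mathcal{T}(\mathrm{P}\rtimes\mathbb{N}^{\times})$ is generated by $\{\mathrm{W}_{(m,a)}\}$, so I fix $(m,a)$ and write $m = 2x+3y$ with $x,y \ge 0$. Then $\rho_{\mathrm{W}}(\pi_{s,t,\upsilon}(\mathrm{W}_{(m,a)})) = \rho_{\mathrm{W}}(s^x t^y \upsilon_a)$, and since $\rho_{\mathrm{W}}$ is a homomorphism and $\upsilon_a = \prod_p \upsilon_p^{e_p(a)}$, this equals $\mathrm{W}_{(2,1)}^x \mathrm{W}_{(3,1)}^y \prod_p \mathrm{W}_{(0,p)}^{e_p(a)}$. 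Using that $\mathrm{W}$ is an isometric homomorphism of the semigroup together with the semidirect-product identities $(2,1)^x = (2x,1)$, $(3,1)^y = (3y,1)$, $\prod_p (0,p)^{e_p(a)} = (0,a)$, and $(2x,1)(3y,1)(0,a) = (2x+3y,a) = (m,a)$, this collapses to $\mathrm{W}_{(m,a)}$. Hence $\rho_{\mathrm{W}}\circ\pi_{s,t,\upsilon} = \mathrm{id}_{\mathcal{T}(\mathrm{P}\rtimes\mathbb{N}^{\times})}$, so $\rho_{\mathrm{W}}$ and $\pi_{s,t,\upsilon}$ are mutual inverses.

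There is essentially no hard step once the two inputs are granted: the well-definedness of $\rho_{\mathrm{W}}$ (Theorem \ref{th:T6}) and the Nica covariance of $\mathrm{X}$ (assumed in the statement, and established later). The only points to be careful about are bookkeeping: that the indicated sets genuinely generate $\mathcal{A}$ and $\mathcal{T}(\mathrm{P}\rtimes\mathbb{N}^{\times})$, and that the semidirect-product arithmetic is carried out correctly — in particular that $\prod_p (0,p)^{e_p(a)}$, which is order-independent because the elements $(0,p)$ pairwise commute, equals $(0,a)$. The conceptual content is merely the standard fact that a $C^*$-algebra homomorphism equals the identity as soon as it does so on generators, which is exactly what keeps the argument short.
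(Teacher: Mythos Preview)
Your proof is correct and follows essentially the same approach as the paper: verify that each composite is the identity by evaluating on generating elements. The only cosmetic difference is that the paper checks $\rho_{\mathrm{W}}\circ\pi_{s,t,\upsilon}$ on the spanning monomials $\mathrm{W}_{(m,a)}\mathrm{W}_{(n,b)}^{*}$ (invoking Proposition~\ref{def:uni}) and $\pi_{s,t,\upsilon}\circ\rho_{\mathrm{W}}$ on $(s^{x}t^{y}\upsilon_a)(s^{x'}t^{y'}\upsilon_b)^{*}$, whereas you check directly on the generators $\mathrm{W}_{(m,a)}$ and $s,t,\upsilon_p$; since both maps are $*$-homomorphisms your reduction is legitimate and slightly cleaner.
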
 
\begin{proof}
  It  is  enough  to  consider  only of   the  form   for $\mathrm{W}_{(m,a)} \mathrm{W}^*_{(n,b)} \in \mathcal{T}(\mathrm{P} \rtimes \mathbb{N} ^{\times})$  by  the  Proposition  \ref{def:uni}.
\begin{eqnarray*}
(\rho_{\mathrm{W}}\cdot \pi)(\mathrm{W}_{(m,a)} \mathrm{W}^{*}_{(n,b)})&=&\rho_{\mathrm{W}}(\mathrm{X}_{(m,a)} \mathrm{X}^{*}_{(n,b)})\\
&=&\mathrm{W}_{(2x+3y,a)}\mathrm{W}^{*}_{(2x'+3y',b)}\\
&=&\mathrm{W}_{(m,a)}\mathrm{W}^{*}_{(n,b)}
\end{eqnarray*}
where $m=2x+3y$ and $n=2x'+3'y  $ some $x, x', y, y'\in \mathbb{N}.$
 And  also  we  have 
\begin{eqnarray*}
(\pi \cdot \rho_{\mathrm{W}})((s^{x}t^{y}v_{a})(s^{x'}t^{y'}v_{b})^{*})&=&\pi(\mathrm{W}^{x}_{(2,1)}\mathrm{W}^{y}_{(3,1)}\mathrm{W}_{(0,a)}\mathrm{W}^{*}_{(0,b)}\mathrm{W}^{*y'}_{(3,1)}\mathrm{W}^{*x'}_{(2,1)})\\
&=&\mathrm{X}^{x}_{(2,1)}\mathrm{X}^{y}_{(3,1)}\mathrm{X}_{(0,a)}\mathrm{X}^{*}_{(0,b)}\mathrm{X}^{*y'}_{(3,1)}\mathrm{X}^{*x'}_{(2,1)}\\
&=&(s^{x}t^{y}v_{a})(s^{x'}t^{y'}v_{b})^{*}.
\end{eqnarray*}
Therefore  we  have  $\pi \cdot \rho_{\mathrm{W}}=i$ and $ \rho_{\mathrm{W}}\cdot \pi=i.$ 
\end{proof}

Lemma \ref{le:X} shows  that $\mathrm{X}$ is an isometric representation of $\mathrm{P} \rtimes \mathbb{N} ^{\times}.$   
Next, we  are  going to prove that the representation  $\mathrm{X}$  satisfies the Nica-covariance relation; the relation is 
$$\mathrm{X}^{*}_{(m,a)}\mathrm{X}_{(n,b)}=\mathrm{X}_{(m,a)^{-1}\sigma}\mathrm{X}^{*}_{(n,b)^{-1}\sigma}$$
where $\sigma=(m,a)\Cup(n,b).$\\

\begin{lemma} 
 Suppose that $(m+a\mathrm{P})\cap(n+b\mathrm{P})\neq\emptyset.$
For $m,n\in\mathrm{P}$ and $a,b\in \mathbb{N}^{\times}$ we let $a^{'}: = a/gcd(a,b),~b^{'}: = b/gcd(a,b),$ and suppose that $(\alpha, ~\beta)$ is the smallest non-negative solution of $(n - m)/gcd(a ,b)=\alpha a^{'} -  \beta b^{'}$ with $\alpha \neq 1,~ \beta \neq 1.$
Then $\mathrm{X}_{(m,a)^{-1}\sigma}\mathrm{X}_{(n,b)^{-1}\sigma}^{*} 
=  \mathrm{X}_{(\alpha,b^{\prime})}\mathrm{X}^{*}_{(\beta,a^{\prime})}$
where  $\sigma =  (m,a)\Cup (n,b).$
 \end{lemma}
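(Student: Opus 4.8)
The plan is to read the statement off Proposition \ref{pr:cal}(2), translated through the representation $\mathrm{X}$. Since $\sigma=(m,a)\Cup(n,b)$ is by definition a common upper bound of $(m,a)$ and $(n,b)$, both $(m,a)^{-1}\sigma$ and $(n,b)^{-1}\sigma$ lie in $\mathrm{P}\rtimes\mathbb{N}^{\times}$, so by Lemma \ref{le:X} the operators $\mathrm{X}_{(m,a)^{-1}\sigma}$ and $\mathrm{X}_{(n,b)^{-1}\sigma}$ are already defined; the only real task is to name these two group elements explicitly.

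First I would invoke the hypothesis $(m+a\mathrm{P})\cap(n+b\mathrm{P})\neq\emptyset$ together with Proposition \ref{pr:cal}(1) to conclude $gcd(a,b)\mid m-n$, so that $(n-m)/gcd(a,b)=\alpha a'-\beta b'$ is solvable over $\mathbb{Z}$ and possesses a smallest non-negative solution with $\alpha\neq1$, $\beta\neq1$; I denote it $(\alpha,\beta)$ and set $\ell:=m+a\alpha=n+b\beta$. By Proposition \ref{pr:cal}(2), $\ell$ is the least element of $(m+a\mathrm{P})\cap(n+b\mathrm{P})$, $\sigma=(\ell,lcm(a,b))$, and moreover
$$(m,a)^{-1}\sigma=(a^{-1}(\ell-m),\,a^{-1}lcm(a,b))=(\alpha,b'),\qquad(n,b)^{-1}\sigma=(b^{-1}(\ell-n),\,b^{-1}lcm(a,b))=(\beta,a').$$
If one prefers a self-contained argument, these two identities can instead be obtained in a couple of lines from the group law $(r,x)^{-1}=(-x^{-1}r,x^{-1})$ on $\mathbb{Q}\rtimes\mathbb{Q}^{*}_{+}$, using $\ell-m=a\alpha$, $\ell-n=b\beta$, and $lcm(a,b)/a=b'$, $lcm(a,b)/b=a'$. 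It is worth pointing out that the constraints $\alpha\neq1$ and $\beta\neq1$ are exactly what place $\alpha$ and $\beta$ in $\mathrm{P}$ (since $1\notin\mathrm{P}$), so that $(\alpha,b')$ and $(\beta,a')$ genuinely belong to $\mathrm{P}\rtimes\mathbb{N}^{\times}$ and the right-hand side $\mathrm{X}_{(\alpha,b')}\mathrm{X}^{*}_{(\beta,a')}$ makes sense; the multiplicative coordinates $a',b'\in\mathbb{N}^{\times}$ require no comment.

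Substituting the two displayed identities into $\mathrm{X}_{(m,a)^{-1}\sigma}\mathrm{X}^{*}_{(n,b)^{-1}\sigma}$ yields $\mathrm{X}_{(\alpha,b')}\mathrm{X}^{*}_{(\beta,a')}$, which is the assertion. I do not expect a genuine obstacle here: all the arithmetic has been packaged into Proposition \ref{pr:cal}(2), and the only point calling for care is the well-definedness of the right-hand side, which the hypotheses $\alpha\neq1$, $\beta\neq1$ settle. The purpose of isolating this lemma is organizational: it recasts the abstract right-hand side of the Nica-covariance identity $\mathrm{X}^{*}_{(m,a)}\mathrm{X}_{(n,b)}=\mathrm{X}_{(m,a)^{-1}\sigma}\mathrm{X}^{*}_{(n,b)^{-1}\sigma}$ as the concrete operator $\mathrm{X}_{(\alpha,b')}\mathrm{X}^{*}_{(\beta,a')}$, the form that will be matched against a direct evaluation of $\mathrm{X}^{*}_{(m,a)}\mathrm{X}_{(n,b)}$ by means of the relations $(\mathrm{T1})$--$(\mathrm{T6})$ in the following step.
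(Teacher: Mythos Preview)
Your proposal is correct and follows essentially the same route as the paper: both invoke Proposition~\ref{pr:cal}(2) to identify $(m,a)^{-1}\sigma=(\alpha,b')$ and $(n,b)^{-1}\sigma=(\beta,a')$, and then substitute. Your version is in fact a bit tidier, since you explicitly note why $\alpha\neq1$, $\beta\neq1$ are needed for $(\alpha,b'),(\beta,a')\in\mathrm{P}\rtimes\mathbb{N}^{\times}$, whereas the paper's proof additionally writes out $\mathrm{X}^{*}_{(m,a)}\mathrm{X}_{(n,b)}$ in preparation for the subsequent lemmas rather than for the lemma at hand.
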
 

\begin{proof}
By proposition 2.4, $\sigma=(m,a)\Cup(n,b)=(\ell,lcm(a,b))$ 
where $\ell:=m+a\alpha=n+b\beta$ is the smallest element of $(m+a\mathrm{P})\cap(n+b\mathrm{P})$ in   the  usual order.
Let $m=2x_{1}+3y_{1}$  and  $n=2x_{2}+3y_{2}$ some $x_{1},x_{2},y_{1},y_{2}\in\mathbb{N}$, then 
\begin{eqnarray*}
\mathrm{X}^{*}_{(m,a)}\mathrm{X}_{(n,b)}&=&(s^{x_{1}}t^{y_{1}}\upsilon_{a})^{*}(s^{x_{2}}t^{y_{2}}\upsilon_{b})\\
&=&\upsilon_{a}^{*}t^{*y_{1}}s^{*x_{1}}s^{x_{2}}t^{y_{2}}\upsilon_{b}.
\end{eqnarray*}
 Since  
$(m,a)^{-1}\sigma   = (\alpha,   b^{'}) $  and  $ (n,b)^{-1}\sigma =  (\beta,  a^{'}),$
we  have 
\[  \mathrm{X}_{(m,a)^{-1}\sigma}\mathrm{X}_{(n,b)^{-1}\sigma}^{*}=
\begin{cases}
0 &if ~(m +   a\mathrm{P})\cap(n +  b\mathrm{P})=\emptyset,\\
\mathrm{X}_{(\alpha,  b^{\prime})}\mathrm{X}^{*}_{(\beta ,  a^{\prime})} & if~ (m +  a\mathrm{P})\cap(n+b\mathrm{P})\neq\emptyset.
\end{cases} \]

\end{proof}

\begin{lemma} \label{le:0}  
 If $(m+a\mathrm{P})\cap(n+b\mathrm{P})=\emptyset,$ then $\upsilon_{a}^{*}t^{y_{1}*}s^{x_{1}*}s^{x_{2}}t^{y_{2}}\upsilon_{b}=0$ where $m=2x_{1}+3y_{1},~n=2x_{2}+3y_{2}$ some $x_{1},x_{2},y_{1},y_{2}\in \mathbb{N}$
 $($i.e., $\mathrm{X}^{*}_{(m,a)}\mathrm{X}_{(n,b)}=0$$).$
\end{lemma}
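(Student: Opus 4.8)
The plan is to put $\mathrm{X}^{*}_{(m,a)}\mathrm{X}_{(n,b)}$ into the normal form $\upsilon_{a}^{*}s^{((j_{1}))}t^{((j_{2}))}\upsilon_{b}$ and then to kill it by isolating one prime divisor of $\gcd(a,b)$ whose exponent is strictly larger than its valuation in $n-m$. Writing $m=2x_{1}+3y_{1}$ and $n=2x_{2}+3y_{2}$ with $x_{i},y_{i}\geq 0$, we have $\mathrm{X}^{*}_{(m,a)}\mathrm{X}_{(n,b)}=\upsilon_{a}^{*}t^{*y_{1}}s^{*x_{1}}s^{x_{2}}t^{y_{2}}\upsilon_{b}$; since $s^{*}s=t^{*}t=1$ and $s,t,s^{*},t^{*}$ commute by $(\mathrm{R2})$, this equals $\upsilon_{a}^{*}s^{((j_{1}))}t^{((j_{2}))}\upsilon_{b}$ with $j_{1}=x_{2}-x_{1}$, $j_{2}=y_{2}-y_{1}$, so that $2j_{1}+3j_{2}=n-m$. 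Under the hypothesis $m=n$ is impossible (then $m\in(m+a\mathrm{P})\cap(n+b\mathrm{P})$ since $0\in\mathrm{P}$), and interchanging $(m,a)$ with $(n,b)$ replaces $\mathrm{X}^{*}_{(m,a)}\mathrm{X}_{(n,b)}$ by its adjoint; so we may assume $N:=n-m=2j_{1}+3j_{2}>0$. By Proposition \ref{pr:cal}(1), $(m+a\mathrm{P})\cap(n+b\mathrm{P})=\emptyset$ means exactly $\gcd(a,b)\nmid N$.

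Hence there is a prime $p$ with $\min\bigl(e_{p}(a),e_{p}(b)\bigr)=e_{p}(\gcd(a,b))>e_{p}(N)=:k$. Set $\alpha=e_{p}(a)$, $\beta=e_{p}(b)$, $\ell=\min(\alpha,\beta)$, so $\ell\geq k+1$. Using $(\mathrm{T2}')$ to commute the prime factors of $\upsilon_{a}$ and $\upsilon_{b}$, write $\upsilon_{a}=\upsilon_{p}^{\alpha}\upsilon_{a_{0}}$ and $\upsilon_{b}=\upsilon_{p}^{\beta}\upsilon_{b_{0}}$ with $a_{0}=a/p^{\alpha}$, $b_{0}=b/p^{\beta}$; then
$$\mathrm{X}^{*}_{(m,a)}\mathrm{X}_{(n,b)}=\upsilon_{a_{0}}^{*}\,\bigl(\upsilon_{p}^{*\alpha}s^{((j_{1}))}t^{((j_{2}))}\upsilon_{p}^{\beta}\bigr)\,\upsilon_{b_{0}}.$$
Splitting $\upsilon_{p}^{\beta}=\upsilon_{p}^{\ell}\upsilon_{p}^{\beta-\ell}$ when $\alpha\leq\beta$, or $\upsilon_{p}^{*\alpha}=\upsilon_{p}^{*(\alpha-\ell)}\upsilon_{p}^{*\ell}$ when $\alpha>\beta$, the inner term contains the factor $F:=\upsilon_{p}^{*\ell}s^{((j_{1}))}t^{((j_{2}))}\upsilon_{p}^{\ell}$, so it suffices to show $F=0$.

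This is where the divisibility criterion from the proof of the lemma establishing $(\mathrm{T1}')$–$(\mathrm{T6}')$ enters: there it is shown, by induction on the exponent, that $\upsilon_{p}^{*\ell}s^{((j_{1}))}t^{((j_{2}))}\upsilon_{p}^{\ell}\neq 0$ forces $p^{\ell}\mid 2j_{1}+3j_{2}=N$. Since $e_{p}(N)=k<\ell$, this is impossible, so $F=0$ and therefore $\mathrm{X}^{*}_{(m,a)}\mathrm{X}_{(n,b)}=0$, i.e. $\upsilon_{a}^{*}t^{*y_{1}}s^{*x_{1}}s^{x_{2}}t^{y_{2}}\upsilon_{b}=0$.

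The step I expect to be delicate is the last one: the divisibility criterion is phrased with $s^{k_{1}},t^{k_{2}}$ for nonnegative exponents, whereas here $j_{1}=x_{2}-x_{1}$ and $j_{2}=y_{2}-y_{1}$ may be negative. One must first normalize $(j_{1},j_{2})$, using $(\mathrm{R1})$, $(\mathrm{R2})$ and Lemma \ref{le:k1} exactly as in that proof, to a nonnegative solution of $2j_{1}+3j_{2}=N$, which exists whenever $N\geq 2$; the remaining case $N=1$ reduces to the term $\upsilon_{p}^{*\ell}s^{*}t\upsilon_{p}^{\ell}$, handled directly by $(\mathrm{T6})$ together with the same induction. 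One also has to be careful that $\upsilon_{p}^{\alpha}$ and $\upsilon_{a_{0}}$ (and likewise $\upsilon_{p}^{\beta}$ and $\upsilon_{b_{0}}$) are split off in the correct order, which is legitimate because all $\upsilon_{q}$ commute by $(\mathrm{T2}')$.
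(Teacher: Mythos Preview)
Your proof is correct and follows the same overall strategy as the paper---isolate a prime $p$ dividing $\gcd(a,b)$ and show that the inner factor $\upsilon_{p}^{*\cdots}s^{((\cdot))}t^{((\cdot))}\upsilon_{p}^{\cdots}$ vanishes---but with one substantive refinement. The paper peels off a \emph{single} copy of $\upsilon_{p}$ on each side and then invokes Lemma~\ref{le:k1}; for this to work it asserts that ``$\gcd(a,b)$ has a prime factor $p$ which does not divide $n-m$.'' That assertion is not always true (e.g.\ $\gcd(a,b)=4$, $n-m=2$), so the paper's argument as written has a gap precisely at the point you handle more carefully. Your version chooses $p$ with $e_{p}(\gcd(a,b))>e_{p}(N)$, extracts $\ell=\min(e_{p}(a),e_{p}(b))$ copies of $\upsilon_{p}$, and then appeals to the divisibility criterion $\upsilon_{p}^{*\ell}s^{((j_{1}))}t^{((j_{2}))}\upsilon_{p}^{\ell}\neq 0\Rightarrow p^{\ell}\mid 2j_{1}+3j_{2}$ established inside the proof of $(\mathrm{T5}')$. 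This is exactly the right fix, and the paper's proof can be repaired in the same way (or, equivalently, by extracting $e_{p}(N)+1$ copies and quoting $(\mathrm{T5}')$ for $a=p^{e_{p}(N)+1}$).

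Your closing remarks about normalizing $(j_{1},j_{2})$ to a nonnegative solution via $(\mathrm{R1})$--$(\mathrm{R2})$ (possible for $N\geq 2$) and treating $N=1$ via $s^{*}t$ and $(\mathrm{T6})$ are also to the point: the paper's induction for $(\mathrm{T5}')$ is stated with nonnegative exponents but, as you note, Lemma~\ref{le:k1} shows $s^{((k_{1}))}t^{((k_{2}))}$ depends only on $2k_{1}+3k_{2}$, so the criterion transfers. The reduction to $N>0$ by taking adjoints is fine since the hypothesis is symmetric in $(m,a)$ and $(n,b)$.
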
 

\begin{proof} 
Since $(m+a\mathrm{P})\cap(n+b\mathrm{P})=\emptyset$ ,  $~m-n\not\equiv 0~(\!\!\!\!\mod~gcd(a,b)).$
Then $gcd(a,b)$ has a prime factor $p$ which does not divide $n-m$. We can write $n-m=cp+k$ with $0<k<p.$
Since $c,k \in \mathbb{Z},$ there exist $c_{1},c_{2},k_{1},k_{2}$ in $\mathbb{Z}$ such that $c=2c_{1}+3c_{2}$  and  $ k=2k_{1}+3k_{2}.$
 So   we  have
$$n-m=cp+k=(2c_{1}+3c_{2})p+(2k_{1}+3k_{2})=2(c_{1}p+k_{1})+3(c_{2}p+k_{2})$$
  and  
$$2((x_{2}-x_{1})-(c_{1}p+k_{1}))=3((c_{2}p+k_{2})-(y_{2}-y_{1})).$$
Let $x_{2}-x_{1}=c_{1}p+k_{1}+3u$  and 
  $y_{2}-y_{1}=c_{2}p+k_{2}-2u$
 some  $u$~ in~ $\mathbb{Z}.$
Now we factor $a=a_{0}p,~b=b_{0}p$ and apply $\mathrm{(T4^{'})}$  and  $\mathrm{(T5)}$ to get
\begin{eqnarray*}
\upsilon_{a}^{*}t^{*y_{1}}s^{*x_{1}}s^{x_{2}}t^{y_{2}}\upsilon_{b}&=&\upsilon_{a_{0}}^{*}\upsilon_{p}^{*}s^{((x_{2}-x_{1}))}t^{((y_{2}-y_{1}))}\upsilon_{p}\upsilon_{b_{0}}\\
&=&\upsilon_{a_{0}}^{*}\upsilon_{p}^{*}s^{((c_{1}p+k_{1}+3u))}t^{((c_{2}p+k_{2}-2u))}\upsilon_{p}\upsilon_{b_{0}}\\
&=&\upsilon_{a_{0}}^{*}(\upsilon_{p}^{*}s^{((c_{1}p))})s^{((k_{1}))}s^{((3u))}t^{*((2u))}t^{((k_{2}))}(t^{((c_{2}p))}\upsilon_{p})\upsilon_{b_{0}}.
\end{eqnarray*}
 Since  $s$ and $t^{*}$ are commute and $t^{2}=s^{3}$ by $\mathrm{(R1)}$  and  $\mathrm{(R2)}$,   we  have  $s^{((3u))}t^{*((2u))}=1.$   By $ \mathrm{(T1)}$ 
$\upsilon_{p}^{*}s^{((c_{1}p))}=s^{((c_{1}))}\upsilon_{p}^{*} $   and  $t^{((c_{2}p))}\upsilon_{p}=\upsilon_{p}t^{((c_{2}))}.$
Thus  the  above  equation  can  be  converted  to 
 $$\upsilon_{a}^{*}t^{*y_{1}}s^{*x_{1}}s^{x_{2}}t^{y_{2}}\upsilon_{b}=\upsilon_{a_{0}}^{*}s^{((c_{1}))}(\upsilon_{p}^{*}s^{((k_{1}))}t^{((k_{2}))}\upsilon_{p})t^{((c_{2}))}\upsilon_{b_{0}}.$$ 
By Lemma \ref{le:k1}  we  have  $\upsilon_{p}^{*}s^{((k_{1}))}t^{((k_{2}))}\upsilon_{p}=0$
 for $0< k < p$.
\end{proof}

\begin{lemma} \label{le:not 0}  
Suppose that $(m+a\mathrm{P})\cap(n+b\mathrm{P})\neq\emptyset$ where $m=2x_{1}+3y_{1}$   and $n=2x_{2}+3y_{2}$ some $x_{1},x_{2},y_{1},y_{2}$ in $\mathbb{N}$.  Then $$~\upsilon_{a}^{*}t^{*y_{1}}s^{*x_{1}}s^{x_{2}}t^{y_{2}}\upsilon_{b}=s^{\alpha^{'}}t^{\alpha^{''}}\upsilon_{b^{'}}\upsilon_{a^{'}}^{*}t^{*\beta^{''}}s^{*\beta^{'}}$$ where $a^{'}:=a/gcd(a,b),~b^{'}:=b/gcd(a,b)$ and suppose that $(\alpha, \beta)$ is the smallest non-negative solution of $(n-m)/gcd(a,b)=\alpha a^{'}-\beta b^{'}$ with $\alpha \neq 1,~\beta \neq 1,~\alpha=2\alpha^{'}+3\alpha^{''},$~and~~$\beta=2\beta^{'}+3\beta^{''}$ some $\alpha^{'},\alpha^{''},\beta^{'},\beta^{''}$ in $\mathbb{N}$
 $($i.e., $\mathrm{X}_{(m,a)}^{*}\mathrm{X}_{(n,b)}=\mathrm{X}_{(m,a)^{-1}\sigma}\mathrm{X}_{(n,b)^{-1}\sigma}^{*}$ where $\sigma=(m,a)\Cup(n,b)$$).$
\end{lemma}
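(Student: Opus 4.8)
The plan is to prove this as the ``nonzero'' companion of Lemma~\ref{le:0}: first reduce to the case $gcd(a,b)=1$ by peeling off, one prime at a time, every prime dividing $gcd(a,b)$, and then dispatch the coprime case by hand. As a preparation, rewrite the left-hand side using $(\mathrm{R2})$ (which makes $s$ commute with $t$ and with $t^{*}$, and $s^{*}$ commute with $t$) and collapse adjacent powers, so that $\mathrm{X}^{*}_{(m,a)}\mathrm{X}_{(n,b)}=\upsilon_{a}^{*}t^{*y_{1}}s^{*x_{1}}s^{x_{2}}t^{y_{2}}\upsilon_{b}=\upsilon_{a}^{*}s^{((x_{2}-x_{1}))}t^{((y_{2}-y_{1}))}\upsilon_{b}$, where $2(x_{2}-x_{1})+3(y_{2}-y_{1})=n-m$; since $(m+a\mathrm{P})\cap(n+b\mathrm{P})\neq\emptyset$, Proposition~\ref{pr:cal} gives $gcd(a,b)\mid n-m$.

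For the reduction, fix a prime $p\mid gcd(a,b)$; then $p\mid n-m$, so choosing an integer solution $(n-m)/p=2r_{1}+3r_{2}$ we may write $x_{2}-x_{1}=pr_{1}+3u$ and $y_{2}-y_{1}=pr_{2}-2u$ for some $u\in\mathbb{Z}$. Factoring $a=a_{0}p$, $b=b_{0}p$, so that $\upsilon_{a}=\upsilon_{a_{0}}\upsilon_{p}$ and $\upsilon_{b}=\upsilon_{b_{0}}\upsilon_{p}$ by $(\mathrm{T2}')$, and using the identity $s^{((3u))}t^{((-2u))}=1$ (already exploited in Lemma~\ref{le:0}) together with $(\mathrm{T1})$ in the forms $\upsilon_{p}^{*}s^{((pr_{1}))}=s^{((r_{1}))}\upsilon_{p}^{*}$ and $t^{((pr_{2}))}\upsilon_{p}=\upsilon_{p}t^{((r_{2}))}$, the expression collapses to $\upsilon_{a_{0}}^{*}s^{((r_{1}))}t^{((r_{2}))}\upsilon_{b_{0}}$, whose net exponent is $(n-m)/p$. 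Iterating over all prime divisors of $gcd(a,b)$ leaves $\upsilon_{a'}^{*}s^{((P_{1}))}t^{((P_{2}))}\upsilon_{b'}$ with $gcd(a',b')=1$ and $2P_{1}+3P_{2}=(n-m)/gcd(a,b)=\alpha a'-\beta b'$, the last equality being the defining equation of $(\alpha,\beta)$ in Proposition~\ref{pr:cal}. One also checks that $\alpha,\beta,a',b'$ and the decompositions $\alpha=2\alpha'+3\alpha''$, $\beta=2\beta'+3\beta''$ (hence the target $\mathrm{X}_{(\alpha,b')}\mathrm{X}^{*}_{(\beta,a')}=s^{\alpha'}t^{\alpha''}\upsilon_{b'}\upsilon_{a'}^{*}t^{*\beta''}s^{*\beta'}$) are untouched by this step, because that defining equation does not see the common factor being removed.

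It then remains to treat $gcd(a,b)=1$ (write $a,b$ for $a',b'$). From $2P_{1}+3P_{2}=\alpha a-\beta b=2(\alpha'a-\beta'b)+3(\alpha''a-\beta''b)$ there is $v\in\mathbb{Z}$ with $P_{1}=(\alpha'a-\beta'b)+3v$ and $P_{2}=(\alpha''a-\beta''b)-2v$, so the additivity rule for $s^{((\cdot))},t^{((\cdot))}$ together with $s^{((3v))}t^{((-2v))}=1$ gives $s^{((P_{1}))}t^{((P_{2}))}=s^{\alpha'a}t^{\alpha''a}s^{*\beta'b}t^{*\beta''b}$ (after commuting $s^{*}$ past $t$ by $(\mathrm{R2})$). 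Now push $\upsilon_{a}^{*}$ to the right past $s^{\alpha'a}t^{\alpha''a}$ and $\upsilon_{b}$ to the left past $s^{*\beta'b}t^{*\beta''b}$ using $(\mathrm{T1}')$, which gives $\upsilon_{a}^{*}s^{\alpha'a}=s^{\alpha'}\upsilon_{a}^{*}$, $\upsilon_{a}^{*}t^{\alpha''a}=t^{\alpha''}\upsilon_{a}^{*}$, $s^{*\beta'b}\upsilon_{b}=\upsilon_{b}s^{*\beta'}$, $t^{*\beta''b}\upsilon_{b}=\upsilon_{b}t^{*\beta''}$; this brings the expression to $s^{\alpha'}t^{\alpha''}\upsilon_{a}^{*}\upsilon_{b}s^{*\beta'}t^{*\beta''}$. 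Finally $(\mathrm{T3}')$, valid because $gcd(a,b)=1$, turns $\upsilon_{a}^{*}\upsilon_{b}$ into $\upsilon_{b}\upsilon_{a}^{*}$, and commuting $s^{*}t^{*}=t^{*}s^{*}$ yields $s^{\alpha'}t^{\alpha''}\upsilon_{b}\upsilon_{a}^{*}t^{*\beta''}s^{*\beta'}=\mathrm{X}_{(\alpha,b)}\mathrm{X}^{*}_{(\beta,a)}$, which is the assertion. Combined with the empty-intersection case (Lemma~\ref{le:0}), this completes the proof that $\mathrm{X}$ is Nica-covariant.

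I expect the only real difficulty to be organizational: carrying the exponent bookkeeping faithfully through the prime-by-prime reduction, verifying the invariance of $\alpha,\beta$ under it, and making sure that every rewriting of a word in $s^{((\cdot))},t^{((\cdot))}$ is licensed by $(\mathrm{R1})$, $(\mathrm{R2})$ and the additivity rule $s^{((k+\ell))}=s^{((k))}s^{((\ell))}$. All the number theory is carried by Proposition~\ref{pr:cal} and all the algebra by the relations $(\mathrm{T1})$--$(\mathrm{T6})$ and their primed analogues, so nothing new is needed beyond careful manipulation.
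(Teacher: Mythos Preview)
Your reduction to the coprime case is correct and essentially the same as the paper's (the paper strips off $G=\gcd(a,b)$ in one step rather than prime by prime, but the mechanism is identical).

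The gap is in your coprime step. You assert that
\[
s^{((P_{1}))}t^{((P_{2}))}=s^{\alpha' a}\,t^{\alpha'' a}\,s^{*\beta' b}\,t^{*\beta'' b}
\]
follows from the additivity rule $s^{((k+\ell))}=s^{((k))}s^{((\ell))}$ together with $s^{((3v))}t^{((-2v))}=1$, but neither identity holds in $\mathcal{A}$ for arbitrary signs. For a non-unitary isometry one has $s^{*m}s^{n}=s^{((n-m))}$, yet $s^{n}s^{*m}\neq s^{((n-m))}$ in general; hence the splitting $s^{((c-d))}=s^{c}s^{*d}$ with $c,d\geq 0$ that you need is illegitimate. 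Likewise, for $v>0$ one computes $s^{((3v))}t^{((-2v))}=s^{3v}t^{*2v}=s^{3v}s^{*3v}$ (using $t^{2}=s^{3}$), which is a proper range projection, not $1$. Thus the word you manufacture before pushing $\upsilon_{a}^{*}$ and $\upsilon_{b}$ through is \emph{not} equal to $s^{((P_{1}))}t^{((P_{2}))}$, and the remainder of the argument, which relies only on $(\mathrm{T1}')$ and $(\mathrm{T3}')$, cannot repair this.

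The paper's coprime argument works by an entirely different mechanism and never attempts a positive--then--negative split of $s^{((\cdot))}$. Instead it uses $(\mathrm{T4}')$ and its adjoint,
\[
\upsilon_{a'}^{*}s = s\,\upsilon_{a'}^{*}\,s^{*(a'-1)}\qquad\text{and}\qquad s^{*}\upsilon_{b'} = s^{\,b'-1}\upsilon_{b'}\,s^{*},
\]
to peel single factors of $s$ across $\upsilon_{a'}^{*}$ and single factors of $s^{*}$ across $\upsilon_{b'}$ one at a time. Iterating this is exactly the Euclidean algorithm on the exponent, terminating at the pair $(\alpha',\beta')$; the remaining $t$-part is then handled with $(\mathrm{T1}')$. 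Your shortcut bypasses precisely the place where $(\mathrm{T4}')$ is doing the real work, and that relation (not just $(\mathrm{T1}')$ and $(\mathrm{T3}')$) is what is needed to establish the coprime case.
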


\begin{proof}
Since $(m+a\mathrm{P})\cap(n+b\mathrm{P})\neq\emptyset$ implies $m\equiv n(\!\!\!\mod gcd(a,b)),$
  we  put  $k=(n-m)/gcd(a,b).$  First suppose that $k>0.$
Let $\mathrm{G}=gcd(a,b),$ then $(n-m)=k\mathrm{G}$.  We  put  $a=a^{'}\mathrm{G}$ and $b=b^{'}\mathrm{G }$   where $ gcd(a',b')=1.$ Since  $k\mathrm{G}=n-m =2(x_{2}-x_{1})+3(y_{2}-y_{1}),$
we  take $k_{1},k_{2}$ in $\mathbb{Z}$ such that $k=2k_{1}+3k_{2}$.  Since  $2(k_{1}\mathrm{G}-x_{2}+x_{1})=3(y_{2}-y_{1}-k_{2}\mathrm{G}),$
    we  have  $k_{1}\mathrm{G}-x_{2}+x_{1}=3u$   and  $y_{2}-y_{1}-k_{2}\mathrm{G}=2u~~~ $ some $u\in\mathbb{Z}.$
  From  the    equation 
\[  \begin{cases}
x_{2}-x_{1}=k_{1}\mathrm{G}-3u,\\
y_{2}-y_{1}=k_{2}\mathrm{G}+2u, \end{cases} \] 
  we have 
\begin{eqnarray*}
\upsilon_{a}^{*}t^{*y_{1}}s^{*x_{1}}s^{x_{2}}t^{y_{2}}\upsilon_{b}&=&\upsilon_{a}^{*}s^{((x_{2}-x_{1}))}t^{((y_{2}-y_{1}))}\upsilon_{b}\\
&=&\upsilon_{a}^{*}s^{((k_{1}\mathrm{G}-3u))}t^{((k_{2}\mathrm{G}+2u))}\upsilon_{b}\\
&=&\upsilon_{a}^{*}s^{((k_{1}\mathrm{G}))}s^{*((3u))}t^{((2u))}t^{((k_{2}\mathrm{G}))}\upsilon_{b}\\
&=&\upsilon_{a^{'}}^{*}\upsilon_{\mathrm{G}}^{*}s^{((k_{1}\mathrm{G}))}t^{((k_{2}\mathrm{G}))}\upsilon_{\mathrm{G}}\upsilon_{b^{'}}\\
&=&\upsilon_{a^{'}}^{*}\upsilon_{\mathrm{G}}^{*}(s^{((k_{1}))}t^{((k_{2}))})^{\mathrm{G}}\upsilon_{\mathrm{G}}\upsilon_{b^{'}}\\
&=&\upsilon_{a^{'}}^{*}s^{((k_{1}))}t^{((k_{2}))}\upsilon_{b^{'}}.
\end{eqnarray*}
By Lemma \ref{le:k1},  it  is   enough  to  consider a particular solution of $k=2k_{1}+3k_{2}$.
If $k=1$,   then  we can take $k_{1} = -1$  and   $k_{2} = 1$.  Since $1=\alpha a^{'} -  \beta b^{'}$ and  $s^{3}=t^{2}$,  we  have
\begin{eqnarray*}
\upsilon_{a^{'}}^{*}s^{((k_{1}))}t^{((k_{2}))}\upsilon_{b^{'}}
&=&\upsilon_{a^{'}}^{*}s^{*}t\upsilon_{b^{'}}\\
&=&\upsilon_{a^{'}}^{*}s^{*(\alpha a^{'}-\beta b^{'})}t^{\alpha a^{'}-\beta b^{'}}\upsilon_{b^{'}}\\
&=&s^{*\alpha}t^{\alpha}\upsilon^{*}_{a^{'}}\upsilon_{b^{'}}s^{\beta}t^{*\beta}\\
&=&s^{\alpha^{'}}t^{\alpha''}\upsilon^{*}_{a^{'}}\upsilon_{b^{'}}t^{*\beta''}s^{*\beta^{'}}.
\end{eqnarray*}
If $k>1$, we can take $k_{1}\geq 0$  and  $ k_{2}\geq 0$ such that $2k_{1}+3k_{2}=k$.  Since   $~t^{2}=s^{3},$  we  can  get  
\[ t^{k_{2}}= \begin{cases}
s^{3k^{'}}~(k^{'}~\in~\{1,2,3,\cdots\})&   \text {if} ~k_{2}~ \text {is~even},\\
ts^{3k^{'}}~(k^{'}~\in~\{0,1,2,\cdots\})&  \text { if}~k_{2}~ \text {is~odd}.
\end{cases} \] 
Suppose that $k_{2}$ is even. Let $t^{k_{2}}=s^{3k^{'}}$ some $k^{'}\geq 1.$ Then  we  have 
 \begin{eqnarray*}
\upsilon_{a^{'}}^{*}(s^{k_{1}}t^{k_{2}})\upsilon_{b^{'}}
=\upsilon_{a^{'}}^{*}s^{k_{1}}s^{3k^{'}}\upsilon_{b^{'}}. 
\end{eqnarray*}
Peeling one factor off $s^{k_{1}}$ and applying the adjoint of $\mathrm{(T4)}$ gives $s^{k_{1}}=ss^{k_{1}-1}$ and
$\upsilon_{a^{'}}^{*}s=s\upsilon_{a^{'}}^{*}s^{*(a^{'}-1)}, $   hence  we  have 
\begin{eqnarray*}
&&\upsilon_{a^{'}}^{*}s^{k_{1}}s^{3k^{'}}\upsilon_{b^{'}}\\
&&=(\upsilon_{a^{'}}^{*}s)s^{k_{1}-1}s^{3k^{'}}\upsilon_{b^{'}}\\
&&=s^{\mathbf{1}}\upsilon_{a^{'}}^{*}s^{k_{1}-\mathbf{1}a^{'}}s^{3k^{'}}\upsilon_{b^{'}}\\
&&=s(\upsilon_{a^{'}}^{*}s)s^{k_{1}-a^{'}-1}s^{3k^{'}}\upsilon_{b^{'}}\\
&&=s^{\mathbf{2}}\upsilon_{a^{'}}^{*}s^{k_{1}-\mathbf{2}a^{'}}s^{3k^{'}}\upsilon_{b^{'}}.
\end{eqnarray*}
If $k_{1}-2a^{'}>0,$ we  peel another $s$ off $s^{k_{1}-2a^{'}}$ and pull it across $\upsilon^{*}_{a^{'}}.$ 
 We repeat this  process if $k_{1}-3a^{'}>0.$ The number of times  which  we can do this is precisely the number $\alpha_{0}$ appearing in the Euclidean algorithm, applied to $a^{'},~b^{'}$ and $k_{1}$.
Continuing until $-a^{'}<k_{1}-\alpha_{0}a^{'}\leq 0,$ we have 
$$\upsilon_{a^{'}}^{*}s^{k_{1}+3k^{'}}\upsilon_{b^{'}}=s^{\alpha_{0}}\upsilon_{a^{'}}^{*}s^{3k^{'}}s^{*(\alpha_{0}a^{'}-k_{1})}\upsilon_{b^{'}}.$$
Now we apply $(\mathrm{T4})$  to  $s^{*}\upsilon_{b^{'}}=s^{(b^{'}-1)}\upsilon_{b^{'}}s^{*}$  so  as to pull factors $s^{*}$ through $\upsilon_{b^{'}}:$
\begin{eqnarray*}
&&s^{*(\alpha_{0}a^{'}-k_{1})}\upsilon_{b^{'}}\\
&&=s^{*(\alpha_{0}a^{'}-k_{1}-1)}(s^{(b^{'}-1)}\upsilon_{b^{'}}s^{*})\\
&&=s^{*(\alpha_{0}a^{'}-k_{1}-\mathbf{1}b^{'})}\upsilon_{b^{'}}s^{*\mathbf{1}}\\
&&=s^{*(\alpha_{0}a^{'}-k_{1}-1-b^{'})}(s^{(b^{'}-1)}\upsilon_{b^{'}}s^{*})s^{*}\\
&&=s^{*(\alpha_{0}a^{'}-k_{1}-\mathbf{2}b^{'})}\upsilon_{b^{'}}s^{*\mathbf{2}}.
\end{eqnarray*}
We can repeat this   process until $b^{'}>k_{1}-\alpha_{0}a^{'}+\beta_{0}b^{'}\geq 0$, then  we  have
$$s^{*(\alpha_{0}a^{'}-k_{1})}\upsilon_{b^{'}}=s^{(k_{1}-\alpha_{0} a^{'} +  
\beta_{0} b^{'})}\upsilon_{b^{'}}s^{*\beta_{0}}.$$ 
Thus it  leads  to us  
$$\upsilon_{a^{'}}^{*}(s^{k_{1}}t^{k_{2}})\upsilon_{b^{'}}=s^{\alpha_{0}}(\upsilon_{a^{'}}^{*}s^{3k^{'}}s^{k_{1}-\alpha_{0}a^{'}+\beta_{0}b^{'}}\upsilon_{b^{'}})s^{*\beta_{0}}.$$
If we cosider $k_{1}-\alpha_{0}a^{'}+\beta_{0}b^{'}$ like the first $k_{1}$ and repeat the  same  process, we can choose $\alpha_{1}$  and   $\beta_{1}$ such that $-a^{'} <(k_{1}-\alpha_{0}a^{'} +  \beta_{0}  b^{'} )- \alpha_{1} a^{'} \leq 0 $
 and $0 \leq (k_{1}-\alpha_{0}a^{'}  +   \beta_{0} b^{'})  - \alpha_{1}a^{'}+ \beta_{1}b^{'}<b^{'}$. Then we have
 $$\upsilon_{a^{'}}^{*}(s^{k_{1}}t^{k_{2}})\upsilon_{b^{'}}=s^{\alpha_{0}}s^{\alpha_{1}}(\upsilon_{a^{'}}^{*}s^{3k^{'}}s^{k_{1}-\alpha_{0}a^{'}+\beta_{0}b^{'}}\upsilon_{b^{'}})s^{*\beta_{1}}s^{*\beta_0}.$$
  And given $\alpha_{i}$ for $0 \leq i <n$ and  $\beta_i$ for $0 \leq i <n,$
  define  $\beta_{n}$ by  $0  \leq k_{1}-(\sum _{i=0}^{n}\alpha_{i}) a^{'}+(\sum _{i=0}^{n}\beta_{i}) b^{'}< b^{'}$
and  $\alpha_{n+1}$ by  $-a^{'} < k_{1}-(\sum _{i=0}^{n+1}\alpha_{i}) a^{'}+(\sum _{i=0}^{n}\beta_{i}) b^{'}\leq 0$. By Euclidean algorithm, there exist $n(\alpha^{'}) $ and $ n (\beta^{'})$  such that $\alpha_{i}=0$  for $i>n(\alpha^{'}) $ and $\beta_{i}=0$  for $i>n(  \beta^{'}). $ Then the pair $$(\alpha^{'},\beta^{'})=(\sum_{i=0}^{n(\alpha^{'})}\alpha_{i},\sum_{i=0}^{n(\beta^{'})}\beta_{i})$$
is the non-negative solution of $k_{1}=\alpha^{'}a^{'}-\beta^{'}b^{'}$.    We have
\begin{eqnarray*}
\upsilon_{a^{'}}^{*}(s^{k_{1}}t^{k_{2}})\upsilon_{b^{'}}&=&s^{\alpha_{0}}s^{\alpha_{1}}\cdots s^{\alpha_{n(\alpha^{'})}}(\upsilon_{a^{'}}^{*}s^{3k^{'}}s^{k_{1}-\alpha^{'}a^{'}  +  \beta^{'}b^{'}}\upsilon_{b^{'}})s^{*\beta_{0}}s^{*\beta_{1}}\cdots s^{*\beta_{n(  \beta^{'})}}\\
&=&s^{\alpha^{'}}(\upsilon_{a^{'}}^{*}s^{3k^{'}}s^{k_{1}-\alpha^{'}a^{'}+\beta^{'}b^{'}}\upsilon_{b^{'}})s^{*\beta^{'}}\\
&=&s^{\alpha^{'}}\upsilon_{a^{'}}^{*}t^{k_{2}}\upsilon_{b^{'}}s^{*\beta^{'}}.
\end{eqnarray*}
Remembering that $k=\alpha a^{'}-\beta b^{'},~k=2k_{1}+3k_{2}$,   and  $(\alpha^{'},\beta^{'})$ is the non-negative solution of $k_{1}=\alpha^{'} a^{'}-\beta^{'} b^{'},$
we have \begin{eqnarray*}
3k_{2}&=&\alpha a^{'}-\beta b^{'}-2\alpha^{'} a^{'}+2\beta^{'} b^{'}\\
&=&a^{'}(\alpha-2\alpha^{'})-b^{'}(\beta-2\beta^{'}).\end{eqnarray*}
 Here, we can take $(\alpha-2\alpha^{'})=3\alpha^{''}$  and  $(\beta-2\beta^{'})=3\beta^{''}$ because $gcd(a^{'}, b^{'})=1$.  Then  $k_{2}=\alpha^{''}a^{'}-\beta^{''}b^{'}.$\\
Since $k_{2}=\alpha^{''}a^{'}-\beta^{''}b^{'},$  we  have  
\begin{eqnarray*}
&&s^{\alpha^{'}}(\upsilon_{a^{'}}^{*}t^{\alpha^{''}a^{'}})(t^{*\beta^{''}b^{'}}\upsilon_{b^{'}})s^{*\beta^{'}}\\
&&=s^{\alpha^{'}}(t^{\alpha^{''}}\upsilon_{a^{'}}^{*})(\upsilon_{b^{'}}t^{*\beta^{''}})s^{*\beta^{'}}\\
&&=s^{\alpha^{'}}t^{\alpha^{''}}\upsilon_{b^{'}}\upsilon_{a^{'}}^{*}t^{*\beta^{''}}s^{*\beta^{'}}.
\end{eqnarray*}
It    is  sufficient  to compute for $k<0.$
If $k<0$  and $k\neq-1$,  then we can take $k_{1}\leq0$ and $k_{2}\leq0.$  
  It  leads  to  us  that 
\begin{eqnarray*}
&&\upsilon_{a^{'}}^{*}s^{*|k_{1}|}t^{*|k_{2}|}\upsilon_{b^{'}}\\
&&=(\upsilon_{b^{'}}^{*}t^{|k_{2}|}s^{|k_{1}|}\upsilon_{a^{'}})^{*}\\
&&=(s^{\beta^{'}}t^{\beta^{''}}\upsilon_{a^{'}}\upsilon_{b^{'}}^{*}t^{*\alpha^{''}}s^{*\alpha^{'}})^{*}\\
&&=s^{\alpha^{'}}t^{\alpha^{''}}\upsilon_{b^{'}}\upsilon_{a^{'}}^{*}t^{*\beta^{''}}s^{*\beta^{'}}
\end{eqnarray*}
where $(\alpha^{'},\beta^{'})$ is the smallest non-negative solution of $k_{1}=\alpha^{'}a^{'}-\beta^{'} b^{'}.$ 
 When $k=-1,$ we can get a result by similar computation  of  the  case $k=1.$ 

Now suppose that $k_{2}$ is odd. Let  $t^{k_{2}}=ts^{3k^{'}}$ some $k^{'} \geq 0.$  Then  we  have 
 \begin{eqnarray*}\upsilon_{a^{'}}^{*}(s^{k_{1}}t^{k_{2}})\upsilon_{b^{'}}=\upsilon_{a^{'}}^{*}s^{k_{1}}ts^{3k^{'}}\upsilon_{b^{'}}=\upsilon_{a^{'}}^{*}ts^{k_{1}+3k^{'}}\upsilon_{b^{'}}.\end{eqnarray*}
To compute odd case, we can prove by moving the position of t appropriately because $t$ and $s$ are commute.
\end{proof}

\begin{theorem} \label{th:den}  
Let $\mathcal{A}$ be the universal $C^{*}$-algebra generated by isometries $s,t$, and $\{\upsilon_{p}:p~is~prime\}$ satisfying relations $\mathrm{(R1,R2)}$  and $ \mathrm{(T1- T6)}.$ Then there is an isomorphism $\pi$ of  $\mathcal{T}(\mathrm{P} \rtimes \mathbb{N} ^{\times})$ onto $\mathcal{A}$ such that $\pi(\mathrm{W}_{(2,1)})=s,$ $~\pi(\mathrm{W}_{(3,1)})=t,$ and $\pi(\mathrm{W}_{(0,p)})=\upsilon_{p}$  for every prime $p$. 
\end{theorem}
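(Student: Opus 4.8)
The plan is to assemble the ingredients already in place rather than to do genuinely new work. By Theorem~\ref{th:T6} there is a homomorphism $\rho_{\mathrm{W}}\colon\mathcal{A}\to\mathcal{T}(\mathrm{P}\rtimes\mathbb{N}^{\times})$ sending $s\mapsto\mathrm{W}_{(2,1)}$, $t\mapsto\mathrm{W}_{(3,1)}$ and $\upsilon_{p}\mapsto\mathrm{W}_{(0,p)}$; and by Lemmas~\ref{le:X}, \ref{le:0} and~\ref{le:not 0} the map $\mathrm{X}\colon\mathrm{P}\rtimes\mathbb{N}^{\times}\to\mathcal{A}$, $\mathrm{X}_{(m,a)}=s^{x}t^{y}\upsilon_{a}$ with $m=2x+3y$ and $x,y\geq 0$, is a covariant isometric representation of $\mathrm{P}\rtimes\mathbb{N}^{\times}$: Lemma~\ref{le:X} gives that it is an isometric representation, Lemma~\ref{le:0} handles the case $(m+a\mathrm{P})\cap(n+b\mathrm{P})=\emptyset$, and Lemma~\ref{le:not 0} the nonempty case. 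So it remains only to name the homomorphism out of the universal object and verify it is an isomorphism.

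First I would apply the universal property in the definition of $\mathcal{T}(\mathrm{P}\rtimes\mathbb{N}^{\times})$ to the covariant representation $\mathrm{X}$: this yields a homomorphism $\pi\colon\mathcal{T}(\mathrm{P}\rtimes\mathbb{N}^{\times})\to C^{*}(\{\mathrm{X}_{(m,a)}\})$ with $\pi(\mathrm{W}_{(m,a)})=\mathrm{X}_{(m,a)}$. Since $\mathrm{X}_{(2,1)}=s$, $\mathrm{X}_{(3,1)}=t$ and $\mathrm{X}_{(0,p)}=\upsilon_{p}$ generate $\mathcal{A}$, the range of $\pi$ is all of $\mathcal{A}$, so $\pi$ is surjective and has the claimed effect on the distinguished generators.

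Next I would establish injectivity of $\pi$ by exhibiting a left inverse. The composite $\rho_{\mathrm{W}}\circ\pi\colon\mathcal{T}(\mathrm{P}\rtimes\mathbb{N}^{\times})\to\mathcal{T}(\mathrm{P}\rtimes\mathbb{N}^{\times})$ fixes each of $\mathrm{W}_{(2,1)},\mathrm{W}_{(3,1)},\mathrm{W}_{(0,p)}$, hence fixes every $\mathrm{W}_{(m,a)}$ since $\mathrm{W}_{(m,a)}=\mathrm{W}_{(2,1)}^{x}\mathrm{W}_{(3,1)}^{y}\prod_{p}\mathrm{W}_{(0,p)}^{e_{p}(a)}$ is a word in them; being a continuous $*$-homomorphism, it therefore agrees with the identity on the dense $*$-subalgebra spanned by $\{\mathrm{W}_{(m,a)}\mathrm{W}^{*}_{(n,b)}\}$ furnished by Proposition~\ref{def:uni}, so $\rho_{\mathrm{W}}\circ\pi=\mathrm{id}$. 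This is exactly the computation recorded in Lemma~\ref{le:iso}. Consequently $\pi$ is injective, and being also surjective it is a $*$-isomorphism of $\mathcal{T}(\mathrm{P}\rtimes\mathbb{N}^{\times})$ onto $\mathcal{A}$; its inverse is $\rho_{\mathrm{W}}$, as one may also read off directly from the second computation in Lemma~\ref{le:iso} that $\pi\circ\rho_{\mathrm{W}}=\mathrm{id}$.

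The only point requiring care here is logical rather than computational: the homomorphism $\pi$ exists only once $\mathrm{X}$ is known to satisfy the Nica covariance relation $\mathrm{X}^{*}_{(m,a)}\mathrm{X}_{(n,b)}=\mathrm{X}_{(m,a)^{-1}\sigma}\mathrm{X}^{*}_{(n,b)^{-1}\sigma}$ with $\sigma=(m,a)\Cup(n,b)$, so the standing hypothesis ``suppose that the representation $\mathrm{X}$ satisfies the Nica-covariance relation'' of Lemma~\ref{le:iso} must be genuinely discharged. That is precisely the content of Lemmas~\ref{le:0} and~\ref{le:not 0}, so there is no circularity. With covariance in hand, Theorem~\ref{th:den} reduces to the observation that $\pi$ and $\rho_{\mathrm{W}}$ are mutually inverse $*$-homomorphisms carrying the named generators to one another.
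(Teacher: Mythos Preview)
Your proposal is correct and follows essentially the same approach as the paper: both invoke Lemmas~\ref{le:X}, \ref{le:0} and~\ref{le:not 0} to establish that $\mathrm{X}$ is a covariant isometric representation, apply the universal property of $\mathcal{T}(\mathrm{P}\rtimes\mathbb{N}^{\times})$ to obtain $\pi$, and then cite Lemma~\ref{le:iso} to conclude that $\pi$ and $\rho_{\mathrm{W}}$ are mutually inverse. Your write-up is in fact a bit more explicit than the paper's about surjectivity and about discharging the standing hypothesis of Lemma~\ref{le:iso}, but the underlying argument is the same.
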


\begin{proof}
By Lemma \ref{le:0}  and  \ref{le:not 0}, we  can  see   that  the formular $\mathrm{X}_{(m,a)}:=s^{x}t^{y}\upsilon_{a}$ where $m=2x+3y$ some $x,y$ in $\mathbb{N}$ defines  Nica-covariant isometric representation
 $\mathrm{X}=\mathrm{X}_{s,t,\upsilon}$ on $\mathrm{P} \rtimes \mathbb{N} ^{\times}$ 
 into  $\mathcal{A}.$     Since  $\mathcal{T}(\mathrm{P} \rtimes \mathbb{N} ^{\times})$  is  the universal $C^{*}$-algebra for covariant isometric representations of $\mathrm{P} \rtimes \mathbb{N} ^{\times}$,  it   induces a homomorphism $\pi_{s,t,\upsilon}: \mathcal{T}(\mathrm{P} \rtimes \mathbb{N} ^{\times}) \to \mathcal{A}$  such that   $\pi_{s,t,\upsilon}(\mathrm{W}_{(m,a)})=\mathrm{X}_{(m,a)}. $   By Lemma  \ref{le:iso}, $ \pi_{s,t,\upsilon}  $ is an isomorphism of $\mathcal{T}(\mathrm{P} \rtimes \mathbb{N} ^{\times})$ onto $\mathcal{A}.  $\end{proof}
 
    Moreover,  since  ${span}\{\mathrm{W}_{(m,a)}\mathrm{W}_{(n,b)}^{*}:(m,a),(n,b)\in \mathrm{P} \rtimes \mathbb{N} ^{\times}\}$ is a dense $*$-subalgebra of 
$\mathcal{T}(\mathrm{P} \rtimes \mathbb{N} ^{\times}),$   we  have   also  that  ${span} \{s^{x_{1}}t^{y_{1}}\upsilon_{a}\upsilon_{b}^{*}t^{*y_{2}}s^{*x_{2}}:(m,a),(n,b)\in \mathrm{P} \rtimes \mathbb{N} ^{\times}$,  $m=2x_{1}+3y_{1},~ n=2x_{2}+3y_{2}$, and $x_{1},x_{2},y_{1},y_{2} \in \mathbb{N}\}$ is a dense $*$-subalgebra of $\mathcal{A}.$


\section{KMS states  on a   generalized  Toeplitz  Algebra }
\label{intro}

 If  we  consider   the unitary representation $u : \mathbb{R} \to  \mathcal U (\ell^{2}(\mathrm{P} \rtimes \mathbb{N} ^{\times}))$ defined  by
$$u_{r}e_{(m,a)}:=a^{ir}e_{(m,a)}$$ 
where   $\{  e_{(m,a)}  :   (m,a)  \in   \mathrm{P} \rtimes \mathbb{N}  \}$   is  the  canonical  orthonormal  basis  of  $\ell^{2}(\mathrm{P} \rtimes \mathbb{N} ^{\times}) $
  and   $ \mathcal U (\ell^{2}(\mathrm{P} \rtimes \mathbb{N}^{\times}))$   is  the   group  of   unitary  operators  in $\mathcal B  (\ell^{2}(\mathrm{P} \rtimes \mathbb{N} ^{\times})),$ 
 then $\{  u_r  |  r  \in  \mathbb R \}$ induces   the  automorphism  group 
 $\tau_r(a) = u_r a u_r^* (  a \in   {\mathcal{C}}_{red} (\mathrm{P} \rtimes \mathbb{N} ^{\times})$) of  the  reduced  semigroup $C^*$-algebra  ${\mathcal{C}}_{red} (\mathrm{P} \rtimes \mathbb{N}^{\times})$  on $\ell^{2}(\mathrm{P} \rtimes \mathbb{N}^{\times})$.
  In  fact, the   definition  of  the  left  regular  isometric   representation  gives   the  following  equations 
  $$\tau_{r}( \W_{(2,1)})= \W_{(2,1 )},~\tau_{r}(  { \W_{(3,1)} }  )= { \W_{(3,1) }},  ~\text {and} ~\tau_{r}(\W_{(0,p) })=p^{ir}\W_{(0,p) } $$  
for  prime  $ p  $ and  $r\in\mathbb{R}$   where $\W: \mathrm{P} \rtimes \mathbb{N}^{\times}  \to  \mathcal B  (\ell^{2}(\mathrm{P} \rtimes \mathbb{N} ^{\times}))$  is
  the  left  regular  representation  on $ \mathrm{P} \rtimes \mathbb{N}^{\times}.$
By the  universality of  $  \mathcal{T}(\mathrm{P} \rtimes \mathbb{N}^{\times})$  there  is  a  $*$-homomorphism  $\Phi $ from
 $  \mathcal{T}(\mathrm{P} \rtimes \mathbb{N}^{\times})$ onto $ {\mathcal{C}}_{red}(\mathrm{P} \rtimes \mathbb{N} ^{\times})$ 
where $\Phi ( W_{(m,a)}  )=  \W_{(m, a)}$  for  $(m, a)  \in \mathrm{P} \rtimes \mathbb{N}^{\times}.$
 Thus we  can  see  that there is a strongly continuous action $\sigma$ of $\mathbb{R}$ on $\mathcal{T}(\mathrm{P} \rtimes \mathbb{N} ^{\times})$   such that 
$$\sigma_{r}(s)=s,~\sigma_{r}(t)=t~\text{and}~\sigma_{r}(\upsilon_{p})=p^{ir}\upsilon_{p}~\text{for~ prime}~p~\text{and}~r \in\mathbb{R}.$$

Suppose that $\alpha$ is an action of $\mathbb{R}$ on a $C^{*}$-algebra $\mathcal{B}.$ An element $a$ of $\mathcal{B}$ is analytic for the action $\alpha$ if the function $r \mapsto \alpha_{r}(a)$ is the restriction to $\mathbb{R}$ of an entire function on $\mathbb{C};$
the set $\mathcal{B}^{a}$ of analytic element is a dense $*$-subalgebra of $\mathcal{B}.$

\begin{proposition} \label{anal}
For our system  $(\mathcal{T}(\mathrm{P} \rtimes \mathbb{N} ^{\times}),\mathbb{R},\sigma)$  the elements $s^{x_{1}}t^{y_{1}}\upsilon_{a}\upsilon_{b}^{*}t^{*y_{2}}s^{*x_{2}}$ for $\mathcal{T}(\mathrm{P} \rtimes \mathbb{N} ^{\times})$ are all analytic for $x_{1},y_{1},x_{2},y_{2}\in \mathbb{N}$ and $a,~b \in\mathbb{N} ^{\times}.$
\end{proposition}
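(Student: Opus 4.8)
The plan is to show that, for $m = s^{x_{1}}t^{y_{1}}\upsilon_{a}\upsilon_{b}^{*}t^{*y_{2}}s^{*x_{2}}$, the map $r \mapsto \sigma_{r}(m)$ is nothing but multiplication of $m$ by a scalar of the form $(a/b)^{ir}$; such a function manifestly extends to an entire $\mathcal{T}(\mathrm{P}\rtimes\mathbb{N}^{\times})$-valued function of a complex variable, which is exactly the definition of analyticity.

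First I would record how $\sigma$ acts on $\upsilon_{a}$ for a general $a\in\mathbb{N}^{\times}$. Writing $\upsilon_{a}=\prod_{p}\upsilon_{p}^{e_{p}(a)}$ and using $\sigma_{r}(\upsilon_{p})=p^{ir}\upsilon_{p}$ together with the fact that $\sigma_{r}$ is a homomorphism, one gets $\sigma_{r}(\upsilon_{a})=\bigl(\prod_{p}p^{ir\,e_{p}(a)}\bigr)\upsilon_{a}=a^{ir}\upsilon_{a}$, since $\prod_{p}p^{e_{p}(a)}=a$. Taking adjoints gives $\sigma_{r}(\upsilon_{b}^{*})=b^{-ir}\upsilon_{b}^{*}$. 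Because $\sigma_{r}$ is a $*$-automorphism fixing $s$ and $t$, it fixes $s^{*},t^{*}$ and all their powers, so that
\[
\sigma_{r}(m)=s^{x_{1}}t^{y_{1}}\,(a^{ir}\upsilon_{a})(b^{-ir}\upsilon_{b}^{*})\,t^{*y_{2}}s^{*x_{2}}=\Bigl(\frac{a}{b}\Bigr)^{ir}m .
\]

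Next I would exhibit the entire extension: the function $z\mapsto e^{iz\log(a/b)}m$ is an entire $\mathcal{T}(\mathrm{P}\rtimes\mathbb{N}^{\times})$-valued function (a fixed element scaled by the entire scalar $e^{iz\log(a/b)}$), and its restriction to $\mathbb{R}$ agrees with $r\mapsto\sigma_{r}(m)$ by the computation above. Hence $m$ is analytic for $\sigma$; since $x_{1},y_{1},x_{2},y_{2}\in\mathbb{N}$ and $a,b\in\mathbb{N}^{\times}$ were arbitrary, all such elements are analytic. There is no serious obstacle here: the only step needing a word of justification is the passage from primes to a general $a$, i.e. the identity $\sigma_{r}(\upsilon_{a})=a^{ir}\upsilon_{a}$, which rests on the multiplicativity of $a\mapsto\prod_{p}p^{e_{p}(a)}$; everything else follows from $\sigma_{r}$ fixing $s,t$ and being multiplicative, so that the scalar simply factors out of the monomial.
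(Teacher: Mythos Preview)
Your proof is correct and follows essentially the same approach as the paper: compute that $\sigma_{r}$ acts on the monomial $m$ by the scalar $(a/b)^{ir}$, and observe that this extends to an entire function. Your version is slightly more explicit in justifying $\sigma_{r}(\upsilon_{a})=a^{ir}\upsilon_{a}$ for composite $a$ and in writing down the entire extension $z\mapsto e^{iz\log(a/b)}m$, but the argument is the same.
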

\begin{proof}
By   the  definition of $\sigma_{r}$ 
\begin{eqnarray*}
\sigma_{r}(s^{x_{1}}t^{y_{1}}\upsilon_{a}\upsilon_{b}^{*}t^{*y_{2}}s^{*x_{2}})
&=&(a^{ir}s^{x_{1}}t^{y_{1}}\upsilon_{a})(b^{-ir}\upsilon_{b}^{*}t^{*y_{2}}s^{*x_{2}})\\
&=&(ab^{-1})^{ir}(s^{x_{1}}t^{y_{1}}\upsilon_{a}\upsilon_{b}^{*}t^{*y_{2}}s^{*x_{2}}).\end{eqnarray*}
Therefore the function $r \mapsto \sigma_{r}(s^{x_{1}}t^{y_{1}}\upsilon_{a}\upsilon_{b}^{*}t^{*y_{2}}s^{*x_{2}})$ is the restriction to $\mathbb{R}$ of  an entire function on $\mathbb{C}.$  Moreover, $s^{x}t^{y}\upsilon_{a}$  and $\upsilon_{b}^{*}t^{*y'}s^{*x'}$ are all analytic  for $x,y,x',y'\in \mathbb{N}$ and $a,~b \in\mathbb{N} ^{\times}.$ \end{proof}

Now we introduce a KMS-state for  a  $C^*$-dynamical  system.  Let  $\mathcal B$ be  a $C^*$-algebra    and $\sigma$  be  an  action  of $\mathbb R$  on  $\mathcal B$.
For $\beta\in(0,\infty)$ a state $\phi$ of $\mathcal{B}$ is  {\it  a $\mathrm{KMS}$ state at inverse temperature $\beta$ for $\sigma,$ or a $\mathrm{KMS}_{\beta}$ state for $\sigma,$} if it satisfies the following   {\it  $\mathrm{KMS}_{\beta}$ condition} ;
$$\phi(ab)
 =\phi(b \sigma_{i\beta}(  a) )~~~~\text{for}~~~a,  b   \in \mathcal{B}^{a}$$
 where $\mathcal{B}^{a}$ is the set of analytic element.

\begin{theorem} 
The system $(\mathcal{T}(\mathrm{P} \rtimes \mathbb{N} ^{\times}),\mathbb{R},\sigma)$ has no $\mathrm{KMS}_{\beta}$ state for $\beta<1.$ 
\end{theorem}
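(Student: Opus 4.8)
The plan is to suppose, for contradiction, that $\phi$ is a $\mathrm{KMS}_{\beta}$ state for some $\beta$ with $0<\beta<1$, and then derive a contradiction by a counting argument applied to the range projections of $\mathrm{W}_{(0,p)}$, as is standard for Hecke/Toeplitz-type dynamics, but being careful about the fact that $1\notin\mathrm{P}$.

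\emph{Step 1: the value of $\phi(\mathrm{W}_{(0,p)}\mathrm{W}_{(0,p)}^{*})$.} Since $\sigma_{r}(\mathrm{W}_{(m,a)})=a^{ir}\mathrm{W}_{(m,a)}$, the analytic extension gives $\sigma_{i\beta}(\mathrm{W}_{(0,p)}^{*})=p^{\beta}\mathrm{W}_{(0,p)}^{*}$, and both $\mathrm{W}_{(0,p)}$ and $\mathrm{W}_{(0,p)}^{*}$ are analytic by Proposition \ref{anal}. Applying the $\mathrm{KMS}_{\beta}$ condition to $a=\mathrm{W}_{(0,p)}^{*}$, $b=\mathrm{W}_{(0,p)}$ and using that $\mathrm{W}_{(0,p)}$ is an isometry,
$$1=\phi\big(\mathrm{W}_{(0,p)}^{*}\mathrm{W}_{(0,p)}\big)=\phi\big(\mathrm{W}_{(0,p)}\,\sigma_{i\beta}(\mathrm{W}_{(0,p)}^{*})\big)=p^{\beta}\,\phi\big(\mathrm{W}_{(0,p)}\mathrm{W}_{(0,p)}^{*}\big),$$
so $\phi(\mathrm{W}_{(0,p)}\mathrm{W}_{(0,p)}^{*})=p^{-\beta}$ for every prime $p$.

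\emph{Step 2: pairwise orthogonal projections.} Fix an odd prime $p$ and, for each $j$ in $S_{p}:=\{0,2,4,\dots,p-1\}\subset\mathrm{P}$, put $Q_{j}:=\mathrm{W}_{(j,1)}\mathrm{W}_{(0,p)}\mathrm{W}_{(0,p)}^{*}\mathrm{W}_{(j,1)}^{*}$; note $|S_{p}|=\tfrac{p+1}{2}$ and every element of $S_{p}$ is $<p$. Each $Q_{j}$ is a projection (a conjugate of the range projection $\mathrm{W}_{(0,p)}\mathrm{W}_{(0,p)}^{*}$ by the isometry $\mathrm{W}_{(j,1)}$). I claim the $Q_{j}$, $j\in S_{p}$, are pairwise orthogonal. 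For $i<j$ in $S_{p}$ we have $2\le j-i<p$, hence $j-i\in\mathrm{P}$, so $(i,1)\le(j,1)$ with $(i,1)\Cup(j,1)=(j,1)$, and covariance (\ref{def:nica}) gives $\mathrm{W}_{(i,1)}^{*}\mathrm{W}_{(j,1)}=\mathrm{W}_{(j-i,1)}$. Writing $j-i=2k_{1}+3k_{2}$ with $k_{1},k_{2}\ge0$ (possible because $j-i\ge2$), we get
$$Q_{i}Q_{j}=\mathrm{W}_{(i,1)}\mathrm{W}_{(0,p)}\big(\mathrm{W}_{(0,p)}^{*}\mathrm{W}_{(2,1)}^{k_{1}}\mathrm{W}_{(3,1)}^{k_{2}}\mathrm{W}_{(0,p)}\big)\mathrm{W}_{(0,p)}^{*}\mathrm{W}_{(j,1)}^{*},$$
and the bracketed factor $\upsilon_{p}^{*}s^{k_{1}}t^{k_{2}}\upsilon_{p}$ vanishes by $(\mathrm{T5})$ since $1\le 2k_{1}+3k_{2}=j-i<p$; thus $Q_{i}Q_{j}=0$. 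This is the one place where $1\notin\mathrm{P}$ is genuinely used: it is what makes $j-i\in\mathrm{P}$, so that the covariance identity and relation $(\mathrm{T5})$ are available.

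\emph{Step 3: the bound and the contradiction.} As a sum of pairwise orthogonal projections $\sum_{j\in S_{p}}Q_{j}$ is a projection, hence $\le1$, so $\sum_{j\in S_{p}}\phi(Q_{j})\le1$. On the other hand $\mathrm{W}_{(j,1)}$ is fixed by $\sigma$, hence analytic with $\sigma_{i\beta}(\mathrm{W}_{(j,1)})=\mathrm{W}_{(j,1)}$, and $\mathrm{W}_{(0,p)}\mathrm{W}_{(0,p)}^{*}\mathrm{W}_{(j,1)}^{*}$ is analytic by Proposition \ref{anal}; the $\mathrm{KMS}_{\beta}$ condition applied to $a=\mathrm{W}_{(j,1)}$, $b=\mathrm{W}_{(0,p)}\mathrm{W}_{(0,p)}^{*}\mathrm{W}_{(j,1)}^{*}$ together with Step 1 gives $\phi(Q_{j})=\phi\big(\mathrm{W}_{(0,p)}\mathrm{W}_{(0,p)}^{*}\mathrm{W}_{(j,1)}^{*}\mathrm{W}_{(j,1)}\big)=\phi\big(\mathrm{W}_{(0,p)}\mathrm{W}_{(0,p)}^{*}\big)=p^{-\beta}$. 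Hence $\tfrac{p+1}{2}\,p^{-\beta}\le1$, which forces $p^{1-\beta}<2$. Since $1-\beta>0$, this is violated as soon as $p\ge 2^{1/(1-\beta)}$, a contradiction; so no $\mathrm{KMS}_{\beta}$ state exists for $\beta<1$. The substantive point of the whole argument is the orthogonality computation of Step 2, and the only routine care needed elsewhere is to check, at each invocation of the $\mathrm{KMS}_{\beta}$ condition, that the elements involved lie in the analytic $*$-subalgebra, which is precisely what Proposition \ref{anal} supplies.
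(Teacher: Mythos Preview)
Your proof is correct and follows the same strategy as the paper: build mutually orthogonal conjugates of the range projection of $\upsilon_{p}$, use the KMS identity to see each has $\phi$-value $p^{-\beta}$, and bound their sum by $1$. The only difference is that the paper takes all shifts $k\in\mathrm{P}$ with $0\le k<a$ (handling $|k-k'|=1$ through Lemma~\ref{le:k1}/(T6)) to obtain $a-1$ projections, whereas you restrict to even shifts so that every difference is $\ge 2$ and only (T5) is needed, at the price of getting $\tfrac{p+1}{2}$ projections and then letting $p\to\infty$; both routes yield $\beta\ge 1$.
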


\begin{proof}
Suppose   that  $\psi$ is a $\mathrm{KMS}_{\beta}$ state for $\sigma.$
 The $\mathrm{KMS}_{\beta}$ condition implies that  for $a \in \mathbb{N}^{\times}$ and  $0 \leq k<a$ where $k=2x+3y$ some $x,y$ in $\mathbb{N}$(fix $a$) 
  it  holds
\begin{eqnarray*}&&\psi((s^{x}t^{y}\upsilon_{a})(\upsilon_{a}^{*}t^{*y}s^{*x}))\\
&&=\psi((\upsilon_{a}^{*}t^{*y}s^{*x})\sigma_{i\beta}(s^{x}t^{y}\upsilon_{a}))\\
&&=a^{-\beta}\psi(\upsilon_{a}^{*}t^{*y}s^{*x}s^{x}t^{y}\upsilon_{a})\\
&&=a^{-\beta}\psi(1)\\
&&=a^{-\beta}.\end{eqnarray*}
The relation $\mathrm{(T5')}$ and Lemma \ref{le:k1} implies that $\upsilon_{a}^{*}s^{((x))}t^{((y))}\upsilon_{a}=0$ for $0\leq 2x+3y<a$.   And  the projections $s^{x}t^{y}\upsilon_{a}\upsilon_{a}^{*}t^{*y}s^{*x}$ for $0
\leq k<a$ are mutually orthogonal  because  
\begin{eqnarray*}
(s^{x}t^{y}\upsilon_{a}\upsilon_{a}^{*}t^{*y}s^{*x})(s^{x}t^{y}\upsilon_{a}\upsilon_{a}^{*}t^{*y}s^{*x})
&=&s^{x}t^{y}\upsilon_{a}(\upsilon_{a}^{*}t^{*y}s^{*x}s^{x}t^{y}\upsilon_{a})\upsilon_{a}^{*}t^{*y}s^{*x}\\
&=&s^{x}t^{y}\upsilon_{a}\upsilon_{a}^{*}t^{*y}s^{*x}
\end{eqnarray*}
and 
\begin{eqnarray*}
(s^{x}t^{y}\upsilon_{a}\upsilon_{a}^{*}t^{*y}s^{*x})(s^{x^{'}}t^{y^{'}}\upsilon_{a}\upsilon_{a}^{*}t^{*y^{'}}s^{*x^{'}})
&=&s^{x}t^{y}\upsilon_{a}(\upsilon_{a}^{*}t^{((y^{'}-y))}s^{((x^{'}-x))}\upsilon_{a})\upsilon_{a}^{*}t^{*y^{'}}s^{*x^{'}}=0 
\end{eqnarray*}
for $0\leq k^{'}<a$ and $k^{'}=2x^{'}+3y^{'}$ some $x^{'},y^{'}$ in $\mathbb{N}. $   Actually we can take $0< k-k^{'}<a$ (or $0< k^{'}-k<a$)  without  loss  of  generality   because  of  $\mathrm{(T1)},$   and  we  get
 $$1\geq \sum_{k=0}^{a-1}s^{x}t^{y}\upsilon_{a}\upsilon_{a}^{*}t^{*y}s^{*x}.$$
  Since  $\psi$   is  positive,  it implies that 
$$1=\psi(1)\geq\psi(\sum_{k=0}^{a-1}s^{x}t^{y}\upsilon_{a}\upsilon_{a}^{*}t^{*y}s^{*x})=aa^{-\beta},$$
which implies $\beta\geq  1.$ \end{proof}

\begin{theorem} \label{4.3}
Let  ~$\beta\in [1,\infty)$     and  a state $\phi$ of $\mathcal{T}(\mathrm{P} \rtimes \mathbb{N} ^{\times})$   be  a 
$\mathrm{KMS}_{\beta}$ state for $\sigma$.  Then for every  $a,b \in \mathbb{N} ^{\times}$ and $m,n  \in \mathrm{P}$ where $m=2x_{1}+3y_{1},~n=2x_{2}+3y_{2}$ some  $x_{1},x_{2},y_{1},y_{2}$ in $\mathbb{N},$ and some $u \in \mathbb{Z}$  we have 
\[  \phi(s^{x_{1}}t^{y_{1}}\upsilon_{a}\upsilon_{b}^{*}t^{*y_{2}}s^{*x_{2}})=
\begin{cases}
0 &if~a\neq b~~or~~m\not \equiv n~(\!\!\!\!\!\mod~a), \\
a^{-\beta}\phi(t^{((\frac{y_{1}-y_{2}+2u}{a}))}s^{((\frac{x_{1}-x_{2}-3u}{a}))})&if~a=b~~and~~m \equiv n~(\!\!\!\!\!\mod~a).
\end{cases} \]

\end{theorem}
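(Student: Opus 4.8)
The plan is to apply the $\mathrm{KMS}_{\beta}$ condition once to peel the $\upsilon_{a}$ off the right of $X:=s^{x_{1}}t^{y_{1}}\upsilon_{a}\upsilon_{b}^{*}t^{*y_{2}}s^{*x_{2}}$, and then to evaluate what remains with the structural Lemmas \ref{le:0} and \ref{le:not 0}. Write $X=PQ$ with $P=s^{x_{1}}t^{y_{1}}\upsilon_{a}$ and $Q=\upsilon_{b}^{*}t^{*y_{2}}s^{*x_{2}}$, both analytic by Proposition \ref{anal}. Since $\sigma_{r}$ fixes $s$ and $t$ while $\sigma_{i\beta}(\upsilon_{a})=a^{-\beta}\upsilon_{a}$, we have $\sigma_{i\beta}(P)=a^{-\beta}P$, and the $\mathrm{KMS}_{\beta}$ condition gives $\phi(X)=\phi\!\left(Q\,\sigma_{i\beta}(P)\right)=a^{-\beta}\phi(QP)$. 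Because $s,t,s^{*},t^{*}$ commute pairwise (an immediate consequence of $(\mathrm{R2})$), we may rewrite $QP=\upsilon_{b}^{*}t^{*y_{2}}s^{*x_{2}}s^{x_{1}}t^{y_{1}}\upsilon_{a}=\upsilon_{b}^{*}s^{((x_{1}-x_{2}))}t^{((y_{1}-y_{2}))}\upsilon_{a}$, an element whose ``total degree'' $2(x_{1}-x_{2})+3(y_{1}-y_{2})$ equals $m-n$. Thus everything reduces to computing $\phi$ of this one element.

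Suppose first $a\neq b$, and put $d=\gcd(a,b)$, $a=da'$, $b=db'$, so that $\gcd(a',b')=1$ and $a'\neq b'$. Using multiplicativity $\upsilon_{a}=\upsilon_{d}\upsilon_{a'}$, $\upsilon_{b}^{*}=\upsilon_{b'}^{*}\upsilon_{d}^{*}$ and setting $Z:=\upsilon_{d}^{*}s^{((x_{1}-x_{2}))}t^{((y_{1}-y_{2}))}\upsilon_{d}$, which is either $0$ (by Lemma \ref{le:0}) or, by Lemma \ref{le:not 0}, a word in $s,t,s^{*},t^{*}$ and in particular a $\sigma$-invariant analytic element, we obtain $\phi(X)=a^{-\beta}\phi\!\left(\upsilon_{b'}^{*}Z\upsilon_{a'}\right)$. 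Now I would apply the $\mathrm{KMS}_{\beta}$ condition a few more times, pulling the factor $\upsilon_{a'}$ to the left past $\upsilon_{b'}^{*}$ by means of the relation $\upsilon_{a'}\upsilon_{b'}^{*}=\upsilon_{b'}^{*}\upsilon_{a'}$ from $(\mathrm{T3}^{'})$, to arrive at $\phi\!\left(\upsilon_{b'}^{*}Z\upsilon_{a'}\right)=(b'/a')^{2\beta}\phi\!\left(\upsilon_{b'}^{*}Z\upsilon_{a'}\right)$. Since $\beta>0$ and $a'\neq b'$ we have $(b'/a')^{2\beta}\neq 1$, so $\phi\!\left(\upsilon_{b'}^{*}Z\upsilon_{a'}\right)=0$ and hence $\phi(X)=0$, which is the first alternative of the statement in this case.

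Finally suppose $a=b$. If $m\not\equiv n\pmod{a}$, then by Proposition \ref{pr:cal} the set $(m+a\mathrm{P})\cap(n+a\mathrm{P})$ is empty, so Lemma \ref{le:0} gives $\upsilon_{a}^{*}s^{((x_{1}-x_{2}))}t^{((y_{1}-y_{2}))}\upsilon_{a}=0$ and $\phi(X)=0$. If $m\equiv n\pmod{a}$, then Lemma \ref{le:not 0} yields $\upsilon_{a}^{*}s^{((x_{1}-x_{2}))}t^{((y_{1}-y_{2}))}\upsilon_{a}=s^{((p))}t^{((q))}$, where $p,q$ are the integers furnished by the minimal Euclidean solution and satisfy $2p+3q=(m-n)/a$. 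Choosing $u\in\mathbb{Z}$ with $p=\frac{x_{1}-x_{2}-3u}{a}$ — which is possible, and then automatically $q=\frac{y_{1}-y_{2}+2u}{a}$ because $2p+3q=(m-n)/a=\frac{2(x_{1}-x_{2})+3(y_{1}-y_{2})}{a}$ — and using that $s$ and $t$ commute, this element is exactly $t^{((\frac{y_{1}-y_{2}+2u}{a}))}s^{((\frac{x_{1}-x_{2}-3u}{a}))}$, so $\phi(X)=a^{-\beta}\phi\!\left(t^{((\frac{y_{1}-y_{2}+2u}{a}))}s^{((\frac{x_{1}-x_{2}-3u}{a}))}\right)$, as claimed. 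I expect the main obstacle to be the index bookkeeping in the last two paragraphs: one must first rewrite $s^{((x_{1}-x_{2}))}t^{((y_{1}-y_{2}))}$ in the four-factor form $t^{*Y_{1}}s^{*X_{1}}s^{X_{2}}t^{Y_{2}}$ with $X_{i},Y_{i}\ge 0$ in order to quote Lemmas \ref{le:0} and \ref{le:not 0}, and one must verify both that the minimal Euclidean solution $(p,q)$ lies in the range where $s^{((p))}t^{((q))}$ is an honest product of powers of $s,t$ and their adjoints (no spurious range projections), and that the associated $u$ is genuinely an integer (it is, since $2A=3B$ for $A=(x_{1}-x_{2})-ap$ and $B=aq-(y_{1}-y_{2})$ forces $3\mid A$, whence $u=A/3\in\mathbb{Z}$).
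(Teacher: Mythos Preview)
Your proposal is correct but takes a noticeably different route from the paper, especially in the case $a\neq b$.

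For $a\neq b$ the paper does something much shorter: it applies the $\mathrm{KMS}_{\beta}$ condition \emph{twice} to $X=PQ$ itself (first moving $P$ around, then $Q^{*}$) and reads off $\phi(X)=(a/b)^{-\beta}\phi(X)$, forcing $\phi(X)=0$ immediately. No factoring of $\gcd(a,b)$, no $(\mathrm{T3}')$, no structural lemmas are needed. Your detour through $Z=\upsilon_{d}^{*}(\cdots)\upsilon_{d}$ and the commutation $\upsilon_{a'}\upsilon_{b'}^{*}=\upsilon_{b'}^{*}\upsilon_{a'}$ can be made to work --- after two KMS moves and the $\sigma$-invariance of $Z$ one obtains a factor $(b'/a')^{\beta}$ in front of the same expression --- but the phrase ``apply the $\mathrm{KMS}_{\beta}$ condition a few more times \ldots\ to arrive at $(b'/a')^{2\beta}$'' leaves the actual sequence of moves unspecified, and the exponent you quote is not the one that drops out most naturally. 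This step should be written out, or replaced by the paper's two-line argument.

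For $a=b$ you invoke Lemmas~\ref{le:0} and~\ref{le:not 0} directly, whereas the paper re-does the relevant calculation by hand using $(\mathrm{T5}')$, Lemma~\ref{le:k1}, and $(\mathrm{T1}')$. Your shortcut is legitimate and arguably cleaner, and your verification that the Euclidean output $(p,q)$ can be rewritten as $\bigl(\tfrac{x_{1}-x_{2}-3u}{a},\tfrac{y_{1}-y_{2}+2u}{a}\bigr)$ for an integer $u$ is correct. One small imprecision: $s,t,s^{*},t^{*}$ do \emph{not} commute pairwise (e.g.\ $ss^{*}\neq s^{*}s$); only the cross-pairs $s\leftrightarrow t,t^{*}$ and $t\leftrightarrow s,s^{*}$ commute, which is all you actually need to rewrite $t^{*y_{2}}s^{*x_{2}}s^{x_{1}}t^{y_{1}}=s^{((x_{1}-x_{2}))}t^{((y_{1}-y_{2}))}$.
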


\begin{proof} Suppose that $\phi$ is a $\mathrm{KMS}_{\beta}$ state. Applying the $\mathrm{KMS}$ condition twice gives 
\begin{eqnarray*}
&&\phi(s^{x_{1}}t^{y_{1}}\upsilon_{a}\upsilon_{b}^{*}t^{*y_{2}}s^{*x_{2}})\\
&&=\phi((s^{x_{2}}t^{y_{2}}\upsilon_{b})^{*}\sigma_{i\beta}(s^{x_{1}}t^{y_{1}}\upsilon_{a}))\\
&&=a^{-\beta}\phi((s^{x_{2}}t^{y_{2}}\upsilon_{b})^{*}(s^{x_{1}}t^{y_{1}}\upsilon_{a}))\\
&&=a^{-\beta}\phi((s^{x_{1}}t^{y_{1}}\upsilon_{a})(\sigma_{i\beta}(s^{x_{2}}t^{y_{2}}\upsilon_{b}))^{*})\\
&&=a^{-\beta}\phi((s^{x_{1}}t^{y_{1}}\upsilon_{a})b^{\beta}(s^{x_{2}}t^{y_{2}}\upsilon_{b})^{*})\\
&&=(\frac{a}{b})^{-\beta}\phi(s^{x_{1}}t^{y_{1}}\upsilon_{a}\upsilon_{b}^{*}t^{*y_{2}}s^{*x_{2}}).
\end{eqnarray*}
  And  this  implies that 
\[  \phi(s^{x_{1}}t^{y_{1}}\upsilon_{a}\upsilon_{b}^{*}t^{*y_{2}}s^{*x_{2}})=
\begin{cases}
0 &if~a\neq b,\\
a^{-\beta}\phi(\upsilon_{b}^{*}t^{((y_{1}-y_{2}))}s^{((x_{1}-x_{2}))}\upsilon_{a})&if~a=b.\end{cases} \]

Suppose first that $a=b$ and $m\not \equiv n~(\!\!\!\!\mod~a).$    
  If  $1\leq m-n<a,$  then $\upsilon_{a}^{*}t^{((y_{1}-y_{2}))}s^{((x_{1}-x_{2}))}\upsilon_{a}=0$  by   $\mathrm{(T5')}$ and Lemma \ref{le:k1}. If $m-n>a,$ we can take $m-n=aq+r$ $(q \in \mathbb{N}$ and $1\leq r <a )$ where $q=2q_{1}+3q_{2}$ and  $r=2r_{1}+3r_{2}.$ Since $2(x_{1}-x_{2})+3(y_{1}-y_{2})=a(2q_{1}+3q_{2})+(2r_{1}+3r_{2}),$ we have  $x_{1}-x_{2}=3u_{0}+aq_{1}+r_{1}$ and  $y_{1}-y_{2}=aq_{2}+r_{2}-2u_{0}$ some $u_{0} \in \mathbb{Z}.$ Thus
 \begin{eqnarray*}
\upsilon_{a}^{*}t^{((y_{1}-y_{2}))}s^{((x_{1}-x_{2}))}\upsilon_{a}&=&(\upsilon_{a}^{*}t^{((aq_{2}))})t^{((r_{2}))}(t^{*((2u_{0}))}s^{((3u_{0}))})s^{((r_{1}))}(s^{((aq_{1}))}\upsilon_{a})\\
&=&t^{((q_{2}))}(\upsilon_{a}^{*}t^{((r_{2}))}s^{((r_{1}))}\upsilon_{a})s^{((q_{1}))}\\
&=&0
\end{eqnarray*} 
 because $1 \leq r<a.$  It  follows  that 
\begin{eqnarray*}\phi(s^{x_{1}}t^{y_{1}}\upsilon_{a}\upsilon_{a}^{*}t^{*y_{2}}s^{*x_{2}})&=&a^{-\beta}\phi(\upsilon_{a}^{*}t^{((y_{1}-y_{2}))}s^{((x_{1}-x_{2}))}\upsilon_{a})=a^{-\beta}\phi(0)=0. 
\end{eqnarray*}

Suppose that $a=b$ and $m \equiv n~(\!\!\mod~a).$  Since $m \equiv n~(\!\!\!\mod~a)$ if and only if there is $u \in \mathbb{Z}$ such that $x_{1}-x_{2}\equiv3u~(\!\!\!\mod~a)$ and $y_{2}-y_{1}\equiv2u~(\!\!\!\mod~a),$  we can take $x_{1}-x_{2}=3u+ak$ and $y_{1}-y_{2}=-2u+ak'$ some $k, k' \in \mathbb{Z}.$ By $(\mathrm{T1})$
\begin{eqnarray*}
\upsilon_{a}^{*}t^{((y_{1}-y_{2}))}s^{((x_{1}-x_{2}))}\upsilon_{a}&=&\upsilon_{a}^{*}t^{((ak'))}t^{*((2u))}s^{((3u))}s^{((ak))}\upsilon_{a}\\
&=&t^{((k'))}\upsilon_{a}^{*}\upsilon_{a}s^{((k))}\\
&=&t^{((k'))}s^{((k))}\\
&=&t^{((\frac{y_{1}-y_{2}+2u}{a}))}s^{((\frac{x_{1}-x_{2}-3u}{a}))}.
\end{eqnarray*}
Therefore   we   have   $a^{-\beta}\phi(\upsilon_{a}^{*}t^{((y_{1}-y_{2}))}s^{((x_{1}-x_{2}))}\upsilon_{a})=a^{-\beta}\phi(t^{((\frac{y_{1}-y_{2}+2u}{a}))}s^{((\frac{x_{1}-x_{2}-3u}{a}))}).$\end{proof}

\begin{theorem}  \label{4.4}
Let  $\beta   \in   [1,\infty) $.  If    a   state $\phi$ of $\mathcal{T}(\mathrm{P} \rtimes \mathbb{N} ^{\times})$  satisfies 
\begin{equation} \label{eq:KMS}
\phi(s^{x_{1}}t^{y_{1}}\upsilon_{a}\upsilon_{b}^{*}t^{*y_{2}}s^{*x_{2}})=
\begin{cases}
0 &if~a\neq b ~or~m\not\equiv n~(\!\!\!\!\!\mod~a),\\
a^{-\beta}\phi(t^{((\frac{y_{1}-y_{2}+2u}{a}))}s^{((\frac{x_{1}-x_{2}-3u}{a}))})&if~a=b ~and~m\equiv n~(\!\!\!\!\!\mod~a)\\\end{cases}
\end{equation}
for $a,b \in \mathbb{N} ^{\times}$ and $m,n  \in \mathrm{P}$ where $m=2x_{1}+3y_{1},~n=2x_{2}+3y_{2}$ some  $x_{1},x_{2},y_{1},y_{2}$ in $\mathbb{N},$ and some $u \in \mathbb{Z},$
then $\phi$ is a  $\mathrm{KMS}_{\beta}$ state for $\sigma.$
\end{theorem}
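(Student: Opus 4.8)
The plan is to verify the $\mathrm{KMS}_{\beta}$ condition $\phi(AC)=\phi(C\sigma_{i\beta}(A))$ directly, using the standard principle that it is enough to check this for $A$ and $C$ ranging over a family of analytic elements whose linear span is norm-dense and invariant (up to scalars) under $\sigma_{z}$. The natural family here is
$$\mathcal{S}=\{\,s^{x_{1}}t^{y_{1}}\upsilon_{a}\upsilon_{b}^{*}t^{*y_{2}}s^{*x_{2}}\ :\ x_{1},x_{2},y_{1},y_{2}\in\mathbb{N},\ a,b\in\mathbb{N}^{\times}\,\},$$
which spans a dense $*$-subalgebra of $\mathcal{T}(\mathrm{P}\rtimes\mathbb{N}^{\times})$ by Theorem~\ref{th:den} and the density statement following it, whose elements are analytic by Proposition~\ref{anal}, and on which $\sigma_{i\beta}$ acts as multiplication by the scalar $(a/b)^{-\beta}$ on $s^{x_{1}}t^{y_{1}}\upsilon_{a}\upsilon_{b}^{*}t^{*y_{2}}s^{*x_{2}}$, as computed in the proof of Proposition~\ref{anal}. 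Thus, writing $A=s^{x_{1}}t^{y_{1}}\upsilon_{a}\upsilon_{b}^{*}t^{*y_{2}}s^{*x_{2}}$ and letting $C\in\mathcal{S}$ be arbitrary, the condition to establish collapses to the single scalar identity $\phi(AC)=(a/b)^{-\beta}\phi(CA)$.

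First I would reduce both $AC$ and $CA$ to single elements of $\mathcal{S}$. Writing $C=s^{p_{1}}t^{q_{1}}\upsilon_{c}\upsilon_{d}^{*}t^{*q_{2}}s^{*p_{2}}$, the inner block of $AC$ is $\upsilon_{b}^{*}t^{*y_{2}}s^{*x_{2}}s^{p_{1}}t^{q_{1}}\upsilon_{c}$, i.e.\ an expression of the form $\upsilon_{b}^{*}(\text{word in }s,t,s^{*},t^{*})\upsilon_{c}$; by Lemmas~\ref{le:0} and~\ref{le:not 0} (reading the roles of $(m,a)$, $(n,b)$ off the exponents) this is either $0$ or a word $s^{\alpha'}t^{\alpha''}\upsilon_{c'}\upsilon_{b'}^{*}t^{*\beta''}s^{*\beta'}$ with $c'=c/\gcd(b,c)$, $b'=b/\gcd(b,c)$ and $(\alpha,\beta)$ the Euclidean-algorithm solution of Proposition~\ref{pr:cal}. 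Pulling the outer $\upsilon_{a},\upsilon_{d}^{*}$ through the $s,t$-powers via $(\mathrm{T1}^{\prime})$ and $(\mathrm{T4}^{\prime})$, absorbing $\upsilon_{a}\upsilon_{c'}=\upsilon_{ac'}$ and $\upsilon_{b'}^{*}\upsilon_{d}^{*}=\upsilon_{b'd}^{*}$, and normalising with $t^{2}=s^{3}$, one rewrites $AC$ again in the form $s^{\bullet}t^{\bullet}\upsilon_{\bullet}\upsilon_{\bullet}^{*}t^{*\bullet}s^{*\bullet}\in\mathcal{S}$. The identical computation with $A$ and $C$ interchanged puts $CA$ in $\mathcal{S}$, with the obvious symmetric multiplicative data ($a'=a/\gcd(a,d)$, $d'=d/\gcd(a,d)$, etc.).

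Next I would evaluate $\phi(AC)$ and $\phi(CA)$ by feeding these reduced forms into hypothesis~(\ref{eq:KMS}). Since $\phi$ annihilates every element of $\mathcal{S}$ whose two $\upsilon$-indices differ, and also every one whose additive parts are incongruent modulo that common index, one checks that both $\phi(AC)$ and $\phi(CA)$ vanish exactly when the relevant intersections of arithmetic progressions are empty --- a condition that turns out to be symmetric in $A$ and $C$ (both reductions force the same constraint, essentially $ac=bd$ together with a congruence) --- and otherwise both are given by the second branch of~(\ref{eq:KMS}). In the nonzero case one must then match the two expressions: the residual $\phi\bigl(t^{((\cdot))}s^{((\cdot))}\bigr)$ factors coincide because the additive exponents produced by the two reductions agree modulo $t^{2}=s^{3}$, and the ratio of the multiplicative-index powers, after the $\gcd/\mathrm{lcm}$ bookkeeping (using $a\gcd(b,c)=b\gcd(a,d)$ when $ac=bd$), comes out to exactly $(a/b)^{-\beta}$. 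This yields $\phi(AC)=(a/b)^{-\beta}\phi(CA)$ and hence, after the opening reduction, the theorem.

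The main obstacle is this last step: organising the case analysis (emptiness versus nonemptiness, coincidences among $a,b,c,d$) and verifying that the additive exponents from the two product reductions really do agree up to $t^{2}=s^{3}$, and that the surviving scalar is precisely $(a/b)^{-\beta}$. Each individual rewriting is an instance of a computation already performed in Lemmas~\ref{le:0} and~\ref{le:not 0}, so no new idea is needed, but the assembly is delicate and must be carried out symmetrically in $A$ and $C$.
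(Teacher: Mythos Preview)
Your proposal is correct and follows essentially the same route as the paper's own proof: reduce the $\mathrm{KMS}_\beta$ condition to $a^{\beta}\phi(xy)=b^{\beta}\phi(yx)$ on pairs of spanning monomials, collapse each product to a single monomial via Lemmas~\ref{le:0} and~\ref{le:not 0} together with $(\mathrm{T1}')$, apply hypothesis~(\ref{eq:KMS}) to each side, and then match the vanishing conditions and the nonvanishing values (the paper phrases the scalar match as $(c')^{-\beta}=(d')^{-\beta}$, deduced from $ac'=db'\Leftrightarrow ca'=bd'$ and the resulting $a'=b'$, $c'=d'$). Your identification of the ``main obstacle'' is exactly where the paper spends most of its effort.
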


\begin{proof}
Suppose that  $\phi$ satisfies (\ref{eq:KMS}). Since it suffices to check the $\mathrm{KMS}$ condition  holds  on   dense spanning elements, $\phi$  is a  $\mathrm{KMS}_{\beta}$ state for $\sigma$ if and only if 
\begin{eqnarray*}
\phi(xy)=\phi(y\sigma_{i\beta}(x))=\phi(y(\frac{a}{b})^{-\beta}x)=(\frac{a}{b})^{-\beta}\phi(yx)
\end{eqnarray*}
where $x=s^{x_{1}}t^{y_{1}}\upsilon_{a}\upsilon_{b}^{*}t^{*y_{2}}s^{*x_{2}},~y=s^{x_{3}}t^{y_{3}}\upsilon_{c}\upsilon_{d}^{*}t^{*y_{4}}s^{*x_{4}}$ in $\mathcal{A}. $  Then we have
\begin{equation} \label{eq:KMS2}
a^{\beta}\phi(s^{x_{1}}t^{y_{1}}\upsilon_{a}\upsilon_{b}^{*}t^{*y_{2}}s^{*x_{2}}s^{x_{3}}t^{y_{3}}\upsilon_{c}\upsilon_{d}^{*}t^{*y_{4}}s^{*x_{4}})=b^{\beta}\phi(s^{x_{3}}t^{y_{3}}\upsilon_{c}\upsilon_{d}^{*}t^{*y_{4}}s^{*x_{4}}s^{x_{1}}t^{y_{1}}\upsilon_{a}\upsilon_{b}^{*}t^{*y_{2}}s^{*x_{2}})
\end{equation}
for $a,b,c,d \in \mathbb{N}^{\times},$  $m,n,q,r\in\mathrm{P},$ 
$m=2x_{1}+3y_{1},$
$n=2x_{2}+3y_{2},$
$q=2x_{3}+3y_{3},$ and
$r=2x_{4}+3y_{4}$
some $x_{i}, y_{i}$ in $\mathbb{N}$ for $i=1,2,3,4.$
We prove this equality by computing both sides.
To compute the left-hand side of (\ref{eq:KMS2}), we first reduce the formula by  using the covariance relation in Lemma \ref{le:0}  and  \ref{le:not 0} 
\begin{eqnarray*}
xy&=&(s^{x_{1}}t^{y_{1}}\upsilon_{a}\upsilon_{b}^{*}t^{*y_{2}}s^{*x_{2}})(s^{x_{3}}t^{y_{3}}\upsilon_{c}\upsilon_{d}^{*}t^{*y_{4}}s^{*x_{4}})\\
&=&s^{x_{1}}t^{y_{1}}\upsilon_{a}(\upsilon_{b}^{*}t^{*y_{2}}s^{*x_{2}}s^{x_{3}}t^{y_{3}}\upsilon_{c})\upsilon_{d}^{*}t^{*y_{4}}s^{*x_{4}}\\
&=& \begin{cases}
0 &if~ (n+b\mathrm{P})\cap(q+c\mathrm{P})=\emptyset,\\
s^{x_{1}}t^{y_{1}}\upsilon_{a}(s^{\alpha^{'}}t^{\alpha^{''}}\upsilon_{c^{'}}\upsilon_{b^{'}}^{*}t^{*\beta^{''}}s^{*\beta^{'}})\upsilon_{d}^{*}t^{*y_{4}}s^{*x_{4}}&if~(n+b\mathrm{P})\cap(q+c\mathrm{P})\neq\emptyset,
\end{cases} \end{eqnarray*}
where $k=(q-n)/gcd(b,c)=2k_{1}+3k_{2}$ some $k_{1},k_{2}$ in $\mathbb{Z}$,$~b^{'}=b/gcd(b,c),~c^{'}=c/gcd(b,c),$ and
$(\alpha,\beta)$ is the smallest non-negative solution of $k=b^{'}\alpha-c^{'}\beta,$
$\alpha\neq 1$, $ \beta \neq 1$, $\alpha= 2\alpha^{'}+3\alpha^{''}$, and  $ \beta=2 \beta^{'}+3 \beta^{''}$ some $\alpha^{'}, \alpha^{''}, \beta^{'}, \beta^{''}$ in $\mathbb{N}.$
By $\mathrm{(T1^{'})}$
\[xy =\begin{cases}
0 &if~ (n+b\mathrm{P})\cap(q+c\mathrm{P})=\emptyset,\\
s^{(x_{1}+a\alpha^{'})}t^{(y_{1}+a\alpha^{''})}\upsilon_{ac^{'}}\upsilon_{db^{'}}^{*}t^{*(y_{4}+d\beta^{''})}s^{*(x_{4}+d\beta^{'})}&if~(n+b\mathrm{P})\cap(q+c\mathrm{P})\neq\emptyset.\end{cases} \]
Now (\ref{eq:KMS}) implies that the left-hand side of (\ref{eq:KMS2}) is 
\begin{eqnarray} \label{(4.3)}
a^{\beta}\phi(xy) \!\!=\!\begin{cases}
0 &\!\!if (n+b\mathrm{P})\cap(q+c\mathrm{P})=\emptyset,\\
0 &\!\!if ac^{'}\neq db^{'}~or~m+a\alpha\not\equiv r+d\beta~\\
&(\!\!\!\!\!\mod~ac^{'}),\\
(c^{'})^{-\!\beta}\!\phi(t^{((\frac{y_{1}+a\alpha^{''}-y_{4}-d\beta^{''}+2u_{0}}{ac'}))}s^{((\frac{x_{1}+a\alpha^{'}-x_{4}-d\beta^{'}-3u_{0}}{ac'}))})&\!\!if~ac^{'}=db^{'},\\
&m+a\alpha\equiv r+d\beta~(\!\!\!\!\!\mod~ac^{'}), \\
&and~(n+b\mathrm{P})\cap(q+c\mathrm{P})\neq\emptyset\end{cases} 
\end{eqnarray}

where $x_{1}+a\alpha^{'}-x_{4}-d\beta^{'}\equiv3u_{0}~(\!\!\mod~ac^{'})$ and $y_{1}+a\alpha^{''}-y_{4}-d\beta^{''}\equiv-2u_{0}~(\!\!\mod~ac^{'}).$

The analogous computation shows that the right-hand side of (\ref{eq:KMS2}) is 
\begin{eqnarray} \label{(4.4)}
b^{\beta}\phi(yx) =\begin{cases}
0 &\!\!if (r+d\mathrm{P})\cap(m+a\mathrm{P})=\emptyset,\\
0 &\!\!if~ ca^{'}\neq bd^{'}~or~q+c\gamma\not\equiv n+b\delta~\\
&(\!\!\!\!\!\mod~bd^{'}),\\
(d^{'})^{-\beta}\phi(t^{((\frac{y_{3}+c\gamma^{''}-y_{2}-b\delta^{''}+2v_{0}}{bd'}))}s^{((\frac{x_{3}+c\gamma^{'}-x_{2}-b\delta^{'}-3v_{0}}{bd'}))})&\!\!if~ca^{'}=bd^{'},\\
&q+c\gamma\equiv n+b\delta~(\!\!\!\!\!\mod~bd^{'}), \\
&and~(r+d\mathrm{P})\cap(m+a\mathrm{P})\neq\emptyset\end{cases} 
\end{eqnarray}
where $x_{3}+c\gamma^{'}-x_{2}-b\delta^{'}\equiv3v_{0}~(\!\!\mod~bd^{'}),$  $y_{3}+c\gamma^{''}-y_{2}-b\delta^{''}\equiv-2v_{0}~(\!\!\mod~bd^{'}),$ $k^{'}=(m-r)/gcd(a,d)=2k_{1}^{'}+3k_{2}^{'}$ some $k_{1}^{'},k_{2}^{'}$ in $\mathbb{Z},$
$~d^{'}=d/gcd(a,d),~a^{'}=a/gcd(a,d),$~and~$(\gamma,\delta)$ is the smallest non-negative solution of $k^{'}=d^{'}\gamma-a^{'}\delta,~\gamma\neq 1,~\delta \neq 1,~\gamma=2\gamma^{'}+3\gamma^{''},$~and~$\delta=2\delta^{'}+3\delta^{''}$ some $\gamma^{'},\gamma^{''},\delta^{'},\delta^{''}$ in $\mathbb{N}.$

We need to verify that the conditions  of   (\ref{(4.3)})  for  the  nonvanishing  case  match   those  of (\ref{(4.4)}). Since   the  situation is symmetric,  we  can   suppose that $ac^{'}=db^{'},$   $m+a\alpha \equiv r+d\beta~(mod~ac^{'}),$ and  $(n+b\mathrm{P})\cap(q+c\mathrm{P})\neq\emptyset$   where $(\alpha,\beta)$ is   defined  as  above.  Suppose first that $ac^{'}=db^{'}.$     We  see  that
\begin{eqnarray*}
ac^{'}=db^{'}&\Leftrightarrow a/d=b^{'}/c^{'} \Leftrightarrow& a^{'}/d^{'}=b^{'}/c^{'}\\
&\Leftrightarrow a^{'}/d^{'}=b/c \Leftrightarrow& ca^{'}=bd^{'};\end{eqnarray*}
Since $gcd( a^{'},d^{'})=1$    and  $gcd(b^{'},c^{'})=1$,  these are all equivalent to $ac=bd.$  And   we deduce that $a^{'}=b^{'}$ and $c^{'}=d^{'}   $  from the reduced form in the middle.
This implies that $(c^{'})^{-\beta}=(d^{'})^{-\beta}.$

Next, notice that  
$m-r \equiv d\beta-a\alpha~(\!\!\!\! \mod~ac^{'})$ implies that there is  some $u$ in $\mathbb{Z}$  such  that 
  $m-r=d \beta-a \alpha+ac^{'}u.$
 Let $\mathrm{G}=gcd(d,a),$ then $a=a^{'}\mathrm{G}$ and  $d=d^{'}\mathrm{G}.$  Since  
 $m-r=d^{'}\mathrm{G}\beta-a^{'}\mathrm{G}\alpha+a^{'}\mathrm{G}c^{'}u,$
   we  have    $m\equiv r~(\!\!\!\! \mod gcd(d,a)).$  Therefore $(r+d\mathrm{P})\cap(m+a\mathrm{P})\neq\emptyset .$
The definition of $(\gamma,\delta)$ implies that $m-r=d\gamma-a\delta. $  
 By using $a^{'}=b^{'}$ and  $c^{'}=d^{'},$ we have 
\begin{eqnarray} \label{=}
\frac{m+a\alpha-r-d\beta}{ac^{'}}&
=&\frac{d\gamma-a\delta+a\alpha-d\beta}{ac^{'}}\nonumber \\
&=&\frac{(\gamma-\beta)d+(\alpha-\delta)a}{gcd(d,a)a^{'}c^{'}}\nonumber\\
&=&\frac{(\gamma-\beta)d^{'}+(\alpha-\delta)a^{'}}{a^{'}c^{'}}\nonumber\\
&=&\frac{(\gamma-\beta)c^{'}+(\alpha-\delta)b^{'}}{b^{'}d^{'}}\nonumber\\
&=&\frac{(q-n)+c\gamma-\delta b}{bd^{'}}.\end{eqnarray}
Therefore   $ac^{'}|(m+a\alpha-r-d\beta)$ shows that 
$bd^{'}|(q-n+c\gamma-\delta b),$ or equivalently that $q+c\gamma\equiv n+b\delta~(mod~bd^{'}).$ Since the conditions for a vanishing  (\ref{(4.3)}) and (\ref{(4.4)}) are  the contraposition of the conditions for a nonvanishing, all conditions of  (\ref{(4.3)}) and (\ref{(4.4)}) were shown to be in agreement.

Consider now the exponent of $t$ and $s$ of (\ref{(4.3)}) and (\ref{(4.4)}). Substituting $m=2x_{1}+3y_{1},$ $n=2x_{2}+3y_{2},$ $q=2x_{3}+3y_{3},$ $r=2x_{4}+3y_{4},$ $\alpha=2\alpha'+3\alpha'',$ $\beta=2\beta'+3\beta'',$ $\gamma=2\gamma'+3\gamma'',$ and $\delta=2\delta'+3\delta''$ into (\ref{=}),  we have 
$$2(x_{1}+a\alpha^{'}-x_{4}-d\beta^{'})+3(y_{1}+a\alpha^{''}-y_{4}-d\beta^{''})=2(x_{3}+c\gamma^{'}-x_{2}-b\delta^{'})+3(y_{3}+c\gamma^{''}-y_{2}-b\delta^{''}).$$
Since $x_{1}+a\alpha^{'}-x_{4}-d\beta^{'}=3u_{0}+Xac'$ some $X \in \mathbb{Z},$ $y_{1}+a\alpha^{''}-y_{4}-d\beta^{''}=-2u_{0}+Yac'$ some $Y \in \mathbb{Z},
$ $x_{3}+c\gamma^{'}-x_{2}-b\delta^{'}=3v_{0}+X'bd'$ some $X' \in \mathbb{Z},$ and $y_{3}+c\gamma^{''}-y_{2}-b\delta^{''}=-2v_{0}+Y'bd'$ some $Y' \in \mathbb{Z},$ we have $2(X-X')=3(Y'-Y).$ We can take $X'=X-3z$ and $Y'=Y+2z$ some $z \in \mathbb{Z}.$ Therefore
\begin{eqnarray*}
t^{((\frac{y_{3}+c\gamma^{''}-y_{2}-b\delta^{''}+2v_{0}}{bd'}))}s^{((\frac{x_{3}+c\gamma^{'}-x_{2}-b\delta^{'}-3v_{0}}{bd'}))}&=&t^{((Y'))}s^{((X'))}\\
&=&t^{((Y))}t^{((2z))}s^{*((3z))}s^{((X))}\\
&=&t^{((Y))}s^{((X))}\\
&=&t^{((\frac{y_{1}+a\alpha^{''}-y_{4}-d\beta^{''}+2u_{0}}{ac'}))}s^{((\frac{x_{1}+a\alpha^{'}-x_{4}-d\beta^{'}-3u_{0}}{ac'}))}.
\end{eqnarray*}
It shows that a nonvanishing value of $a^{\beta}\phi(xy)$ equals a nonvanishing value of $b^{\beta}\phi(yx)$ where $x=s^{x_{1}}t^{y_{1}}\upsilon_{a}\upsilon_{b}^{*}t^{*y_{2}}s^{*x_{2}},~y=s^{x_{3}}t^{y_{3}}\upsilon_{c}\upsilon_{d}^{*}t^{*y_{4}}s^{*x_{4}}$ in $\mathcal{A}$ for (\ref{(4.3)}) and (\ref{(4.4)}).
This completes the proof of (\ref{eq:KMS2}), and we show that $\phi$ is a $\mathrm{KMS}_{\beta}$ state.\end{proof}

\section*{Acknowledgements}
  { S. Y. Jang  was supported by Basic Science Research Program through the
National Research Foundation of Korea funded by the Ministry of
Education, Science and Technology (NRF-201807042748). }


\end{document}